\theoremstyle{remark}
\newtheorem{para}{\bf}[subsection]
\newtheorem{claim}[para]{\bf Claim}
\newtheorem{rem}[para]{\bf Remark}
\newtheorem{convention}[para]{\bf Convention}
\theoremstyle{definition}
\newtheorem{dfn}[para]{Definition}
\theoremstyle{plain}
\newtheorem{thm}[para]{Theorem}
\newtheorem{lemma}[para]{Lemma}
\newtheorem{cor}[para]{Corollary}
\newtheorem{prop}[para]{Proposition}
\newenvironment{numequation}{\addtocounter{para}{1}
\begin{equation}}{\end{equation}}
\newcommand{\vpi}{{\varpi}}
\newcommand{\vep}{{\varepsilon}}
\newcommand{\Ga}{\Gamma}
\newcommand{\bbB}{{\mathbb B}}
\newcommand{\bbF}{{\mathbb F}}
\newcommand{\bbG}{{\mathbb G}}
\newcommand{\bbN}{{\mathbb N}}
\newcommand{\bbQ}{{\mathbb Q}}
\newcommand{\bbT}{{\mathbb T}}
\newcommand{\bbX}{{\mathbb X}}
\newcommand{\bbZ}{{\mathbb Z}}
\newcommand{\bM}{{\bf M}}
\newcommand{\frb}{{\mathfrak b}}
\newcommand{\frg}{{\mathfrak g}}
\newcommand{\frn}{{\mathfrak n}}
\newcommand{\fro}{{\mathfrak o}}
\newcommand{\frt}{{\mathfrak t}}
\newcommand{\frB}{{\mathfrak B}}
\newcommand{\frG}{{\mathfrak G}}
\newcommand{\frI}{{\mathfrak I}}
\newcommand{\frK}{{\mathfrak K}}
\newcommand{\frQ}{{\mathfrak Q}}
\newcommand{\frU}{{\mathfrak U}}
\newcommand{\frX}{{\mathfrak X}}
\newcommand{\cA}{{\mathcal A}}
\newcommand{\cB}{{\mathcal B}}
\newcommand{\cC}{{\mathcal C}}
\newcommand{\cD}{{\mathcal D}}
\newcommand{\cE}{{\mathcal E}}
\newcommand{\cF}{{\mathcal F}}
\newcommand{\cG}{{\mathcal G}}
\newcommand{\cI}{{\mathcal I}}
\newcommand{\cJ}{{\mathcal J}}
\newcommand{\cL}{{\mathcal L}}
\newcommand{\cM}{{\mathcal M}}
\newcommand{\cN}{{\mathcal N}}
\newcommand{\cO}{{\mathcal O}}
\newcommand{\cR}{{\mathcal R}}
\newcommand{\cS}{{\mathcal S}}
\newcommand{\cX}{{\mathcal X}}
\newcommand{\sC}{{\mathscr C}}
\newcommand{\sD}{{\mathscr D}}
\newcommand{\sE}{{\mathscr E}}
\newcommand{\sF}{{\mathscr F}}
\newcommand{\sG}{{\mathscr G}}
\newcommand{\sK}{{\mathscr K}}
\newcommand{\sM}{{\mathscr M}}
\newcommand{\sN}{{\mathscr N}}
\newcommand{\sT}{{\mathscr T}}
\newcommand{\tH}{R}
\newcommand{\Q}{{\mathbb Q}}
\newcommand{\Z}{{\mathbb Z}}
\newcommand{\Qp}{{\mathbb Q_p}}
\newcommand{\Fq}{{\mathbb F_q}}
\newcommand{\der}{\partial}
\newcommand{\lan}{\langle}
\newcommand{\ran}{\rangle}
\newcommand{\Ad}{{\rm Ad}}
\newcommand{\bksl}{\backslash}
\newcommand{\Coh}{{\rm Coh}}
\newcommand{\GL}{{\rm GL}}
\newcommand{\Hom}{{\rm Hom}}
\newcommand{\hra}{\hookrightarrow}
\newcommand{\Lie}{{\rm{Lie}}}
\newcommand{\Loc}{{\mathscr Loc}}
\newcommand{\lra}{\longrightarrow}
\newcommand{\midc}{{\; | \;}}
\newcommand{\pr}{{\rm pr}}
\newcommand{\ra}{\rightarrow}
\newcommand{\rig}{{\rm rig}}
\newcommand{\Spec}{{\rm Spec}}
\newcommand{\Spf}{{\rm Spf}}
\newcommand{\sub}{\subset}
\newcommand{\Sym}{{\rm Sym}}
\newcommand{\tsD}{{\widetilde{\sD}}} 
\newcommand{\hsD}{{\widehat{\sD}}} 
\newcommand{\car}{\stackrel{\simeq}{\longrightarrow}}
\newcommand{\GO}{G_0}
\newcommand{\GOv}{G_{v,0}}
\newcommand{\Gkc}{\bbG(k)^\circ}
\newcommand{\Dgk}{\cD^{\rm an}(\bbG(k)^\circ)}
\newcommand{\Dgkt}{\cD^{\rm an}(\bbG(k)^\circ)_{\theta_0}}
\newcommand{\DgkO}{D(\bbG(k)^\circ,\GO)}
\newcommand{\DgkOt}{D(\bbG(k)^\circ,\GO)_{\theta_0}}
\newcommand{\DGO}{D(\GO,L)}
\newcommand{\DGOt}{D(\GO,L)_{\theta_0}}
\newcommand{\uder}{\underline{\partial}}
\newcommand{\uxi}{\underline{\xi}}
\newcommand{\unu}{\underline{\nu}}
\begin{document}
\raggedbottom

\title{$\sD^\dagger$-affinity of formal models of flag varieties}
\author{Christine Huyghe}
\address{IRMA, Universit\'e de Strasbourg, 7 rue Ren\'e Descartes, 67084 Strasbourg cedex, France}
\email{huyghe@math.unistra.fr}
\author{Deepam Patel}
\address{Department of Mathematics, Purdue University,
150 N. University Street, West Lafayette, IN 47907, U.S.A.}
\email{deeppatel1981@gmail.com}
\author{Tobias Schmidt}
\address{IRMAR, Universit\'e de Rennes 1, Campus Beaulieu, 35042 Rennes cedex, France}
\email{Tobias.Schmidt@univ-rennes1.fr}
\author{Matthias Strauch}
\address{Indiana University, Department of Mathematics, Rawles Hall, Bloomington, IN 47405, U.S.A.}
\email{mstrauch@indiana.edu}

\thanks{D.P. would like to acknowledge support from IH\'ES and the ANR program $p$-adic Hodge Theory and beyond (Th\'eHopaD) ANR-11-BS01-005. T.S. would like to acknowledge support of the Heisenberg programme of Deutsche Forschungsgemeinschaft (SCHM 3062/1-1).
M.S. would like to acknowledge the support of the National Science Foundation (award DMS-1202303).
}

\begin{abstract} Let $\bbG$ be a connected split reductive group over a finite extension $L$ of $\bbQ_p$, denote by $\bbX$ the flag variety of $\bbG$, and let $G = \bbG(L)$. In this paper we prove that formal models $\frX$ of the rigid analytic flag variety $\bbX^\rig$ are $\sD^\dagger_{\frX,k}$-affine for certain sheaves of arithmetic differential operators $\sD^{\dagger}_{\frX,k}$. Furthermore, we show that the category of admissible locally analytic $G$-representations with trivial central character is naturally anti-equivalent to a full subcategory of the category of $G$-equivariant families $(\sM_{\frX,k})$ of modules $\sM_{\frX,k}$ over $\sD^\dagger_{\frX,k}$ on the projective system of all formal models $\frX$ of $\bbX^\rig$.
\end{abstract}

\maketitle

\tableofcontents

\section{Introduction}

Let $L/\Qp$ be a finite extension with ring of integers $\fro = \fro_L$.
In \cite{PSS4} the authors introduced certain sheaves of differential operators\footnote{These sheaves were denoted $\tsD^\dagger_{n,k}$ in \cite{PSS4} to distinguish them from the sheaves of arithmetic differential operators introduced by P. Berthelot. For ease of notation, we have decided to drop the tilde throughout this paper.} $\sD^\dagger_{n,k}$ on a family of semistable formal models $\frX_n$ of the rigid-analytic projective line over $L$ (the notion of formal model is in the sense of \cite[Def. 4 in sec. 7.4]{BoschLectures}). A key result there is that $\frX_n$ is $\sD^\dagger_{n,k}$-affine. Moreover, it was shown in loc. cit. how admissible locally $L$-analytic representations with trivial infinitesimal character of the $L$-analytic group $\GL_2(L)$, or rather their associated coadmissible modules, can be described in terms of $\GL_2(L)$-equivariant projective systems of coherent sheaves $\sM_n$ over $\sD^\dagger_{n,n}$. We generalized the construction of the sheaves $\sD^\dagger_{n,k}$ to higher-dimensional formal schemes, which are not necessarily semi-stable, in \cite{HSS}.

\vskip8pt

In this paper we generalize the previous results on $\sD^\dagger$-affinity, as well as the representation theoretic results to (not necessarily semistable) formal models of general flag varieties of split reductive groups. So let $\bbG_0$ be a connected split reductive group scheme over $\fro$, and denote by $\frX_0$ the formal completion of the flag scheme $X_0$ of $\bbG_0$. We then consider a formal admissible blow-up $\frX$ of $\frX_0$. In section \ref{new_sheaves} we briefly recall the definition of the sheaves of differential operators $\sD^{\dagger}_{\frX,k}$ as introduced in \cite{HSS}. Here $k$ is an integer which we call the {\it congruence level}. It is bounded below by a non-negative integer $k_\frX$ which depends on the blow-up morphism $\frX \ra \frX_0$. Our first main result is then

\vskip8pt

{\bf Theorem 1} (cf. \ref{thm-equivalence}){\bf .} {\it For all $k \ge k_\frX$ the formal scheme $\frX$ is $\sD^\dagger_{\frX,k}$-affine.}

\vskip8pt

This means that the global sections functor furnishes an equivalence of categories between coherent modules over $\sD^\dagger_{\frX,k}$ and finitely presented modules over the ring $H^0(\frX,\sD^\dagger_{\frX,k})$. It is shown that $H^0(\frX,\sD^\dagger_{\frX,k})$ can be identified with the central reduction $\cD^{\rm an}(\Gkc)_{\theta_0}$ of Emerton's analytic distribution algebra $\cD^{\rm an}(\Gkc)$ of the wide open rigid-analytic $k^{\rm th}$ congruence subgroup $\Gkc$ of $\bbG_0$, cf. \cite[5.2, 5.3]{EmertonA}, \cite[5.3]{HS13}. The functor $M \rightsquigarrow \Loc^\dagger_{\frX,k}(M) := \sD^\dagger_{\frX,k} \otimes_{\cD^{\rm an}(\Gkc)_{\theta_0}} M$ is quasi-inverse to the global sections functor. Compare \cite{BB81, BK80, BK81} for the classical setting of modules over the Lie algebra of $\bbG =
\bbG_0 \times_{\Spec(\fro)} \Spec(L)$ and localization on the flag variety $\bbX$ of $\bbG$.

\vskip8pt

As in \cite{PSS4} our main motivation for this result concerns locally analytic representations. The category of admissible locally analytic representations of the locally $L$-analytic group $G := \bbG(L)$ with trivial infinitesimal character $\theta_0$ is anti-equivalent to the category of coadmissible modules over $D(G,L)_{\theta_0}$, the central reduction of the locally $L$-analytic distribution algebra $D(G,L)$ of $G$ at $\theta_0$.

\vskip8pt

On the geometric side, we consider the (semisimple) Bruhat-Tits building $\cB$ of $G$ \cite{BruhatTitsI, BruhatTitsII}. This is a simplicial complex whose dimension equals the semisimple rank of $\bbG$ and which is equipped with an action of $G$. Most important for our purposes is the $G$-stable subset of $\cB$ of so-called special vertices. To any such vertex $v$ the theory of Bruhat and Tits associates a reductive group scheme $\bbG_v$ over $\fro$ whose generic fiber comes equipped with a canonical isomorphism to $\bbG$. (The group scheme $\bbG_0$ we considered before can be taken to be one of those group schemes $\bbG_{v_0}$, say.) The flag scheme $X_{v,0}$ of $\bbG_v$ therefore has the property that its generic fiber is canonically isomorphic to $\bbX$\footnote{The index ``$0$'' of $X_{v,0}$ indicates that we think of $X_{v,0}$ as the bottom layer of the tower of admissible blow-ups of this scheme.}. Passing to formal completions we thus obtain a family of smooth formal schemes $\frX_{v,0}$, indexed by the set of special vertices of $\cB$, which is equipped with a $G$-action. Furthermore, we consider for every special vertex $v$ the set $\cF_v$ of all admissible blow-ups $\frX$ of $\frX_{v,0}$, and we define $\underline{\cF}_v \sub \cF_v \times \bbN$ to be the set of pairs $(\frX,k)$ with $\frX \in \cF_v$ and $k  \ge k_\frX$. There is a natural partial ordering on $\underline{\cF} := \coprod_v \underline{\cF}_v$ which makes this a directed set (\ref{partial_orderingII}), and $\cF := \coprod_v \cF_v$ naturally carries a $G$-action, cf. \ref{G_action} for details.

\vskip8pt

A {\it coadmissible $G$-equivariant arithmetic $\sD$-module} on $\cF$ consists of a family $$\sM = (\sM_{\frX,k})_{(\frX,k) \in \underline{\cF}}$$ of coherent $\sD^\dagger_{\frX,k}$-modules $\sM_{\frX,k}$ satisfying certain compatibility properties, cf. \ref{dfn-coadmod}. In particular, these properties make it possible to form the projective limit

$$\Gamma(\sM) := \varprojlim_{(\frX,k) \in \underline{\cF}} H^0(\frX,\sM_{\frX,k})$$

\vskip8pt

which, as we show, carries the structure of a coadmissible $D(G,L)_{\theta_0}$-module. On the other hand, given a coadmissible $D(G,L)_{\theta_0}$-module $M$ we let $V = M'$ be its continuous dual, which is an admissible locally analytic representation of $G$. We then let $M_{v,k}$ be the continuous dual of the subspace $V_{\bbG_v(k)^\circ-{\rm an}} \sub V$ of $\bbG_v(k)^\circ$-analytic vectors in $V$. For any $(\frX,k) \in \underline{\cF}_v$ we have the coherent $\sD^\dagger_{\frX,k}$-module

$$\Loc^\dagger_{\frX,k} (M_{v,k})=\sD^\dagger_{\frX,k} \otimes_{\cD^{\rm an}(\bbG_v(k)^\circ)_{\theta_0}} M_{v,k} \;.$$

\vskip8pt

We denote the family of all those modules by $\Loc^G(M)$. Our main result is then

\vskip8pt

{\bf Theorem 2} (cf. \ref{thm_G_equiv}){\bf .} {\it The functors $\Loc^G$ and $\Gamma$ are quasi-inverse equivalences between the category of coadmissible $D(G,L)_{\theta_0}$-modules and the category $\sC^G_\cF$ of coadmissible $G$-equivariant arithmetic $\sD$-modules on $\cF$.}

\vskip8pt

The projective limit $\frX_\infty := \varprojlim_{\frX \in \cF} \frX$ is the Zariski-Riemann space attached to $\bbX^\rig$. The latter space is in turn isomorphic (as a ringed space, after inverting $p$ on the structure sheaf) to the adic space attached to $\bbX^\rig$, cf. \cite[Thm. 4 in sec. 2, Thm. 4 in sec. 3]{SchnVPut}. One can also form the projective limit $\sD_\infty$ of the sheaves $\sD^\dagger_{\frX,k}$ which is then a $G$-equivariant sheaf of $p$-adically complete rings of differential operators on $\frX_\infty$, cf. \ref{para-sheaf}. Similarly, for any object $\sM = (\sM_{\frX,k})$ in $\sC^G_\cF$ one can form the projective limit $\sM_\infty$ of the sheaves $\sM_{\frX,k}$ which is then a $G$-equivariant $\sD_\infty$-module. The assignment $\sM \rightsquigarrow \sM_\infty$ is a faithful functor from $\sC^G_\cF$ to the category of $G$-equivariant $\sD_\infty$-modules, cf. \ref{faithful2}.

\vskip8pt

In a final section we illustrate this localization theory by computing the $\sD^\dagger_{\frX,k}$-modules associated to certain classes of locally analytic representations.

\vskip8pt

In this paper we only treat the case of the central character $\theta_0$, but there is an extension of this theorem available for characters more general than $\theta_0$ by using twisted versions of the sheaves $\sD^\dagger_{\frX,k}$. Moreover, the construction of the sheaf $\sD^\dagger_{\infty}$ carries over to general smooth rigid-analytic (or adic) spaces over $L$. These questions will be addressed in future work.

\vskip8pt

We would also like to mention that K. Ardakov and S. Wadsley are  developing a theory of $D$-modules on general rigid-analytic spaces, cf. \cite{ArdakovICM, AWDcapI, AWDcapII}. In their work they consider deformations of the sheaves of crystalline differential operators (as in \cite{AW}), whereas we take as a starting point deformations of Berthelot's rings of arithmetic differential operators. That the rings of differential operators considered by us are close in spirit to the theory of rigid cohomology will, as we hope, open a way to use techniques and results from rigid cohomology to investigate locally analytic representations. A first example for such an interaction can be found in \cite[sec. 7]{PSS4}.

\vskip8pt

{\it Notation.} $L$ denotes a finite extension of $\Qp$, with ring of integers $\fro$ and uniformizer $\vpi$. Let $q$ be the cardinality of the residue field $\fro/(\vpi)$ which we also denote by $\Fq$. $\bbG_0$ denotes a split connected reductive group scheme over $\fro$ and $\bbB_0 \sub \bbG_0$ a Borel subgroup scheme. We let $\bbG = \bbG_0 \times_{\Spec(\fro)} \Spec(L)$ be the generic fiber of $\bbG_0$. The Lie algebra of $\bbG_0$ is denoted by $\frg_{\fro}$. If $X$ is a smooth scheme over $\Spec(\fro)$, we denote by $\sT_{X}$ its relative tangent sheaf, i.e., $\sT_X = \sT_{X/\Spec(\fro)}$. If $X$ (resp. $\frX$) is a  scheme (resp. formal scheme) over $\Spec(\fro)$ (resp. $\Spf(\fro)$), a coherent sheaf of ideals $\cI\subset \cO_{X}$ (resp. $\frI \sub \cO_\frX$) is said to be {\it open} (w.r.t. the $p$-adic topology) if $\vpi$ is locally nilpotent on $ \mbox{{\bf Spec}}(\cO_X/\cI)$ (resp. $\mbox{{\bf Spf}}(\cO_\frX/\frI)$). A scheme (or a formal scheme) over $\Spec(\fro)$ (resp. $\Spf(\fro)$) which arises from blowing up an
open ideal sheaf on $X$ (resp. $\frX$) will be called {\it an admissible blow-up} of $X$ (resp. {\it admissible formal blow-up} of $\frX$). If $X$ denotes a scheme over $\fro$, we always denote by $\frX$ the completion of $X$ along its special fiber $X \times_{\Spec(\fro)} \Spec(\Fq)$. The set of non-negative integers will be denoted by $\bbN$ (in particular, our convention is such that $\bbN$ contains zero). If $V$ is a topological vector space over $L$, then $V' = \Hom^{\rm cont}_L(V,L)$ denotes space of continuous linear forms on $V$, and when we write $V'_b$, then the subscript ''$b$'' indicates that we equip this space with the strong topology of bounded convergence. If not said otherwise, all modules are tacitly assumed to be left modules.

\vskip8pt

{\it Acknowledgments.} C.H. and M.S. benefited from an invitation to MSRI during the Fall 2014 and thank this institution for excellent working conditions. M.S. gratefully acknowledges the support of the Institut de Recherche Math\'ematique Avanc\'ee (IRMA) of the University of Strasbourg during a stay in research in June 2016. We would also like to thank the anonymous referees for their careful reading and very helpful reports from which this paper has greatly benefited.

\section{The sheaves \texorpdfstring{$\sD^{(k,m)}_X$}{} and \texorpdfstring{$\hsD^{(k,m)}_\frX$}{}}
\label{new_sheaves}

While sections 3-6 of this paper are only about flag varieties and their formal models, we work in this section in somewhat greater generality, as this is more natural for the material considered here. For more details about the constructions discussed below, as well as the proofs of the main result of this section, we refer the reader to~\cite{HSS}.

\subsection{Differential operators with levels and congruence levels}

Here we briefly recall the local description of Berthelot's sheaf $\sD^{(m)}$ of differential operators of level $m$. Moreover, we introduce a kind of deformation of this sheaf, to be denoted by $\sD^{(k,m)}$, where $k \in \bbN$ is what we call a {\it congruence level}. For $k=0$ we have $\sD^{(0,m)} = \sD^{(m)}$. As will become apparent in section \ref{global_sec}, this terminology is motivated by the relation of these sheaves, in the case of flag varieties, to principal congruence subgroups. In the special case of the projective line, the sheaves with congruence levels have been introduced in \cite{PSS4}, and similar constructions also appeared earlier in \cite{AW}.

\vskip8pt

Let $X_0$ be a smooth scheme over $\fro$ and $\frX_0$
the associated formal scheme, i.e., the completion of $X_0$ along the special fiber $X_0 \times_{\Spec(\fro)} \Spec(\Fq)$. The usual sheaf of relative differential operators \cite[16.8]{EGA_IV_4} on $X_0$ over $\fro$ will be denoted by $\sD_{X_0/\Spec(\fro)}$ (without superscripts as `decorations'). Let $U_0$ be an affine open subset of $X_0$, endowed with local coordinates $x_1,\ldots,x_M$, and let $\der_1,\ldots,\der_M$ be the corresponding derivations. Denote by $m$ a fixed non-negative integer. For a non-negative integer $\nu_l$, we let $q^{(m)}_{\nu_l}$ be the quotient of the euclidean division of $\nu_l$ by $p^m$, i.e., $q^{(m)}_{\nu_l} = \lfloor \frac{\nu_l}{p^m} \rfloor$. Then we set

\begin{numequation}\label{notbracket}
\der_{l}^{\lan \nu_l \ran_{(m)}} = q^{(m)}_{\nu_l}!\der_{l}^{[\nu_l]},
\end{numequation}

where, as usual, $\der_{l}^{[\nu_l]}\in \Ga(U_0,\sD_{U_0/\Spec(\fro)})$ is such that $l! \der_{l}^{[\nu_l]}=\der_{l}^{\nu_l}$. For $\unu = (\nu_1,\ldots,\nu_M) \in \bbN^M$, we put $\uder^{\lan \unu \ran_{(m)}} = \prod_{l=1}^M \der_{l}^{\lan \nu_l \ran_{(m)}}$, $\uder^{[\unu ]}=\prod_{l=1}^M \der_{l}^{[\nu_l]}$, and $|\unu| = \nu_1 + \ldots + \nu_M$.

\vskip8pt

Denote by $\sD^{(m)}_{X_0} := \sD^{(m)}_{X_0/\Spec(\fro)} \sub \sD_{X_0/\Spec(\fro)}$ the ring of level $m$ differential operators of Berthelot, cf. \cite[sec. 2]{BerthelotDI} (from now on we agree on omitting the base scheme $\Spec(\fro)$ in the notation as in \cite[2.2.3]{BerthelotDI}). Then we have the following description in local coordinates:

$$\Ga(U_0,\sD^{(m)}_{X_0})= \left\{\sum_{\unu}^{< \infty} a_{\unu}\uder^{\lan \unu \ran_{(m)}}\,|\, a_{\unu}\in
\Ga(U_0,\cO_{X_0})\right\} \;,$$

\vskip8pt

as follows from \cite[2.2.5]{BerthelotDI}. Now let $k \in \bbN$ be another non-negative integer (the congruence level mentioned above). We then define a subring $\Ga(U_0,\sD^{(k,m)}_{X_0}) \sub \Ga(U_0,\sD^{(m)}_{X_0})$ by setting

\begin{numequation}\label{explicit_description_1}\Ga(U_0,\sD^{(k,m)}_{X_0})= \left\{\sum_{\unu}^{< \infty} \vpi^{k|\unu|}a_{\unu}\uder^{\lan \unu \ran}\,|\, a_{\unu}\in \Ga(U_0,\cO_{X_0})\right\} \;.
\end{numequation}

It is straightforward to see that this is indeed a subring of $\Ga(U_0,\sD^{(m)}_{X_0})$. And, as the notation already suggests, it is not hard to show that these rings glue together to give a subsheaf $\sD^{(k,m)}_{X_0}$ of $\sD^{(m)}_{X_0}$.

\vskip8pt

\begin{rem}\label{integral_structure} Let $X_{0,\eta} = X_0 \times_{\Spec(\fro)} \Spec(L)$ be the generic fiber of $X_0$ which is an open subset of $X_0$. We note that for any pair $(k,m) \in \bbN^2$ the inclusion $\sD^{(k,m)}_{X_0} \sub \sD_{X_0}$ induces a canonical isomorphism $\sD^{(k,m)}_{X_0}\Big|_{X_{0,\eta}} = \sD_{X_0}\Big|_{X_{0,\eta}} = \sD_{X_{0,\eta}}$, because $\vpi$ is invertible on $X_{0,\eta}$. Any of the sheaves $\sD^{(k,m)}_{X_0}$ therefore extends the sheaf $\sD_{X_{0,\eta}}$ to the whole scheme $X_0$.
\end{rem}

\subsection{Differential operators with levels and congruence levels on blow-ups}

\begin{para}\label{lifting_diff_op} {\it Lifting the sheaves to blow-ups.} Denote by $\pr: X \ra X_0$ an admissible blow-up. That is to say, $X$ is obtained by blowing up a sheaf of ideals $\cI \sub \cO_{X_0}$ containing some power of $\vpi$, say $\vpi^k$. In particular, the blow-up morphism $\rm pr$ induces a canonical isomorphism $X_\eta \simeq X_{0,\eta}$ between the generic fibers, cf. \ref{integral_structure} for the notation.

\vskip8pt

The sheaf $\pr^{-1}\Big(\sD^{(k,m)}_{X_0}\Big)$ on $X$ is again a sheaf of rings, and it follows from \ref{integral_structure} that there is a canonical isomorphism $\pr^{-1}\Big(\sD^{(k,m)}_{X_0}\Big)\Big|_{X_\eta} = \sD_{X_\eta}$. In particular, $\cO_{X_\eta}$ is naturally a module over $\pr^{-1}\Big(\sD^{(k,m)}_{X_0}\Big)\Big|_{X_\eta}$. Now the question arises for which congruence levels $k \in \bbN$ this module structure extends to a module structure on $\cO_X$ over $\pr^{-1}\Big(\sD^{(k,m)}_{X_0}\Big)$. Since functions on $X$ are determined by their restriction to $X_\eta$, any such extension of module structure is unique. As in \cite[2.1.10]{HSS} one shows that the condition $\vpi^k \in \cI$ implies that $\cO_X$ carries a natural structure of a module over $\pr^{-1}\Big(\sD^{(k,m)}_{X_0}\Big)$. Therefore, the sheaf

\begin{numequation}\label{rings_diff_op}
\sD^{(k,m)}_X := \pr^* \sD^{(k,m)}_{X_0} = \cO_X \otimes_{\pr^{-1}(\cO_{X_0})} \pr^{-1}\Big(\sD^{(k,m)}_{X_0}\Big)
\end{numequation}

can be equipped with a multiplication which extends the sheaf of rings structure of $\pr^{-1}\Big(\sD^{(k,m)}_{X_0}\Big)$. Explicitly, if $\partial_1, \partial_2$ are both {\it derivations} and local sections of $\pr^{-1}\Big(\sD^{(k,m)}_{X_0}\Big)$, and if $f_1, f_2$ are local sections of $\cO_X$, then $(f_1 \otimes \partial_1) \cdot (f_2 \otimes \partial_2) = f_1\partial_1(f_2) \otimes \partial_2 + f_1f_2 \otimes \partial_1 \partial_2$. We set

\begin{numequation}\label{defk_X} k_X = \min_\cI \min\{k \in \bbN \midc \vpi^k \in \cI \} \;,
\end{numequation}

where the first minimum is taken over all open ideal sheaves $\cI$ such that the blow-up of $\cI$ is isomorphic to $X$ (over $X_0$). Suppose $U_0 \sub X_0$ is an affine open subset which is endowed with local coordinates $x_1,\ldots,x_M$. Consider an affine open subset $U \sub \pr^{-1}(U_0) \sub X$. Then we have the following description of the sections of $\sD^{(k,m)}_{X}$ over $U$:

\begin{numequation}\label{explicit_description_2}\Ga(U,\sD^{(k,m)}_X)= \left\{\sum_{\unu}^{< \infty} \vpi^{k|\unu|}a_{\unu}\uder^{\lan \unu \ran_{(m)}}\,|\, a_{\unu}\in \Ga(U,\cO_{X})\right\} \;.
\end{numequation}
\end{para}

\begin{para}\label{filtrations} {\it Filtrations on $\sD^{(k,m)}_X$.} Using this description, we observe that the sheaf $\sD^{(k,m)}_{X}$ is filtered by the order of differential operators. More precisely, if $d \in \bbN$ is given, we define the subsheaf $\sD^{(k,m)}_{X,d}$ as follows. Let $V \sub X$ be any open subset. Then $\Ga(V,\sD^{(k,m)}_{X,d})$ consists of those elements $P \in \Ga(V,\sD^{(k,m)}_X)$ such that for any open affine $U_0 \sub X_0$ as above, and for any open affine $U \sub V \cap \pr^{-1}(U_0)$, the restriction $P|_U$ is of the form $\sum_{|\unu| \le d}\vpi^{k|\unu|}a_{\unu}\uder^{\lan \unu \ran_{(m)}}$ with $a_{\unu}\in \Ga(U,\cO_{X})$ and where, as usual, $|\unu| = \nu_1 + \ldots + \nu_M$.
There are canonical isomorphisms $\sD^{(k,m)}_{X,d} = \pr^* \sD^{(k,m)}_{X_0,d}$. We put

\begin{numequation}\label{tangent_sheaf_with_congr_level}
\sT_{X,k} := \vpi^k \pr^*(\sT_{X_0}) \sub \pr^*(\sT_{X_0}) \;,
\end{numequation}

and we denote by

$$\Sym^{(m)}(\sT_{X,k}) = \bigoplus_d\Sym^{(m)}_d(\sT_{X,k})$$

\vskip8pt

the graded level $m$ symmetric algebra generated by the sheaf $\sT_{X,k}$, cf. \cite[sec. 1.2]{Huyghe97}. If $U_0$ is affine endowed
with local coordinates $x_1,\ldots,x_M$  as before,  and $\xi_1, \ldots,\xi_M$ a basis of $\sT_{X_0}$ restricted to $U_0$, then using notations of ~\ref{notbracket} one has for an open affine $U\subset \pr^{-1}(U_0)$

$$\Ga(U,\Sym^{(m)}_d(\sT_{X,k}))=\bigoplus_{|\unu|=d}\cO(U)\vpi^{kd} \uxi^{\lan \unu \ran_{(m)}} \;.$$

\vskip8pt
\end{para}

In \cite[2.2.2]{HSS} we show the following

\begin{prop}\label{finite_tDm} Suppose $k \ge k_X$. Then the associated graded algebra of $\sD^{(k,m)}_X$ for the filtration by the order of differential operators is isomorphic to $\Sym^{(m)}(\sT_{X,k})$.
\end{prop}

\begin{para}\label{completions} {\it $p$-adic completions.} We denote the completion of $X_0$ and $X$ along their special fibers by $\frX_0$ and $\frX$, respectively, and we let $\hsD^{(k,m)}_\frX$ be the $p$-adic completion of $\sD^{(k,m)}_X$ which we consider as a sheaf on the formal scheme $\frX$. For fixed $k \ge k_X$, cf. \ref{defk_X}, we also define

$$\sD^\dagger_{\frX,k}=\varinjlim_m \hsD^{(k,m)}_{\frX,\Q} \;.$$

\vskip8pt

{\bf Remark.} We emphasize that the sheaves $\sD^{(k,m)}_X$, $\hsD^{(k,m)}_{\frX}$, $\sD^\dagger_{\frX,k}$ do not only depend on $X$, resp. $\frX$, but in an essential way on the blow-up morphism to $X_0$, resp. $\frX_0$.

\end{para}

In this paper we will only be working with formal schemes $\frX$ which are completions along their special fibers of admissible blow-ups $X \ra X_0$ of a smooth scheme $X_0$ over $\Spec(\fro)$. In this regard we have the following

\begin{prop}\label{algebraization} Let $\frX \ra \frX_0$ be an admissible formal blow-up, obtained by blowing up an open ideal sheaf $\frI \sub \cO_{\frX_0}$. Then there is an open ideal sheaf $\cI \sub \cO_{X_0}$ such that $\frI$ is the restriction of the $p$-adic completion of $\cI$ to $\frX_0$, and $\frX$ is therefore the completion of the blow-up $X$ of $\cI$ along its special fiber.
\end{prop}

\begin{proof} We remark that $X_0$ being smooth over $\fro$ implies that it is locally noetherian, which is all we need for this statement to hold. Consider the quotient sheaf $\frQ = \cO_{\frX_0}/\frI$ and the canonical surjection

$$\sigma: \cO_{\frX_0} \lra \frQ$$

\vskip8pt

of sheaves on $\frX_0$, and let $i: \frX_0 \ra X_0$ be the closed embedding of the special fiber. This is a morphism of ringed spaces. We consider the corresponding map of sheaves $\cO_{X_0} \ra i_* \cO_{\frX_0}$ which we compose with $i_* \sigma$ to obtain the morphism of sheaves on $X_0$

$$\tau: \cO_{X_0} \ra i_* \frQ \;.$$

\vskip8pt

Our first goal is to show that $\tau$ is surjective. Let $U \sub X_0$ be an affine open subscheme, and $\frU \sub \frX_0$ be the completion along its special fiber. We have $\vpi^n \frQ_U = 0$ for some $n \in \bbN$, and hence $\vpi^n \frQ_\frU = 0$. The restriction of the surjection $\sigma$ to $\frU$ thus factors as

$$\sigma|_\frU: \cO_{\frX_0}|_\frU  = \cO_\frU \lra \cO_\frU \otimes_\fro \fro/(\vpi^n) \lra \frQ|_\frU \;.$$

\vskip8pt

Since $\cO_\frU$ is the restriction to $\frU$ of the $p$-adic completion of $\cO_U$, we see that the canonical map $\cO_U \ra i_* \cO_\frU$ induces an  isomorphism  $\cO_U \otimes_\fro \fro/(\vpi^n) \car i_*\Big(\cO_\frU \otimes_\fro \fro/(\vpi^n)\Big)$ and therefore a surjection

$$\cO_U \twoheadrightarrow \cO_U \otimes_\fro \fro/(\vpi^n) = i_*\Big(\cO_\frU \otimes_\fro \fro/(\vpi^n)\Big) \twoheadrightarrow i_*(\frQ|_\frU) = (i_*\frQ)|_U \;.$$

\vskip8pt

Of course, this map is the same as $\tau|_U$, and $\tau|_U$ is thus surjective. Therefore, $\tau$ is surjective. Put $\cI = \ker(\tau)$ and consider the tautological exact sequence of coherent sheaves on $X_0$

$$0 \lra \cI \lra \cO_{X_0} \lra i_* \frQ \lra 0 \;.$$

\vskip8pt

By \cite[10.8.8]{EGA_I}, the completion functor is exact on coherent sheaves, and the previous exact sequence thus yields an exact sequence of sheaves on $\frX_0$

$$0 \lra \widehat{\cI}|_{\frX_0} \lra \cO_{\frX_0} \stackrel{\sigma}{\lra} \frQ \lra 0 \;.$$

\vskip8pt

This shows that $\frI$ is the restriction to $\frX_0$ of the $p$-adic completion of $\cI$. The very definition of admissible formal blow-up, cf. \cite[Def.3 in sec. 8.2]{BoschLectures} shows that then $\frX$ is equal to the formal completion along its special fiber of the blow-up of $\cI$. \end{proof}

\vskip8pt

Given an admissible formal blow-up $\frX \ra \frX_0$ we put

\begin{numequation}\label{defkX} k_\frX = \min_\frI \min \{k \in \bbN \midc \vpi^N\in \frI \} \;,
\end{numequation}

where the first minimum is taken over all open ideal sheaves $\frI \sub \cO_{\frX_0}$ such that the blow-up of $\frI$ is isomorphic to $\frX$ (over $\frX_0$).

\begin{convention}\label{convention} In the remainder of this paper, whenever we consider the sheaves $\sD^{(k,m)}_X$ on the admissible blow-up $X$ of $X_0$ we tacitly assume that $k \ge k_X$. Similarly, whenever we consider the sheaves $\hsD^{(k,m)}_\frX$, $\hsD^{(k,m)}_{\frX,\Q}$, or $\sD^\dagger_{\frX,k}$ on the admissible formal blow-up $\frX$ of $\frX_0$ we tacitly assume that $k \ge k_\frX$.
\end{convention}

\vskip8pt

We will also need the following result from \cite[2.2.2, 2.3.3]{HSS}:

\begin{thm}\label{prop-exactdirectimage}
Let $\pi: \frX' \ra \frX$ be a morphism over $\frX_0$ between admissible formal blow-ups of $\frX_0$, and let $k\geq {\rm max}\{k_{\frX},k_{\frX'}\}$.

\vskip8pt

(i) $\hsD^{(k,m)}_{\frX,\Q}$ and $\sD^\dagger_{\frX,k}$ are coherent sheaves of rings. Moreover, $\hsD^{(k,m)}_{\frX,\Q}$ has noetherian rings of sections over all open affine subsets.

\vskip8pt

(ii) There is a canonical isomorphism $\pi_* \sD^\dagger_{\frX',k} = \sD^\dagger_{\frX,k}$. If $\sM'$ is a coherent $\sD^\dagger_{\frX',k}$-module, then $R^j\pi_*\sM'=0$ for $j>0$. The functor $\pi_*$ induces an exact functor from the category of coherent modules over $\sD^\dagger_{\frX',k}$ to the category of coherent modules over $\sD^\dagger_{\frX,k}$.
\end{thm}

\section{Formal models of flag varieties}\label{models}

\subsection{Models, formal models, and group actions}\label{models_gp_actions}

\begin{para}\label{models_and_formal_models} {\it Models and formal models.} For the remainder of this paper $\bbG_0$ denotes a split connected reductive group scheme over $\fro$ and $\bbB_0 \sub \bbG_0$ a Borel subgroup scheme. The Lie algebra of $\bbG_0$ is denoted by $\frg_{\fro}$. By

$$X_0 = \bbB_0 \bksl \bbG_0$$

\vskip8pt

we denote the flag scheme of $\bbG_0$, which is smooth and projective over $\fro$ \cite[Exp. XXVI, Cor. 3.5]{SGA3}, and we let $\frX_0$ be the completion of $X_0$ along its special fiber $X_0 \times_{\Spec(\fro)} \Spec(\Fq)$. By $\bbG = \bbG_0 \times_{\Spec(\fro)} \Spec(L)$ (resp. $\bbB$) we denote the generic fiber of $\bbG_0$ (resp. $\bbB_0$), and we let $\frg$ be the Lie algebra of $\bbG$. The flag variety $\bbB \bksl \bbG$ of $\bbG$ will be denoted by $\bbX$, and we let $\bbX^\rig$ be the rigid-analytic space associated by the GAGA functor to $\bbX$, cf. \cite[5.4]{BoschLectures}. Any admissible formal $\fro$-scheme $\frX$ (in the sense of \cite[Def. 1 in sec. 7.4]{BoschLectures}) whose associated rigid-analytic space is isomorphic to $\bbX^\rig$ will be called a {\it formal model} of $\bbX^\rig$, or simply a formal model of the flag variety associated to $\bbG$, cf. \cite[Def. 4 in sec. 7.4]{BoschLectures}. For any two formal models $\frX_1, \frX_2$ of $\bbX^\rig$ there is a third formal model $\frX'$ and admissible formal blow-up morphisms $\frX' \ra \frX_1$ and $\frX' \ra \frX_2$, cf. \cite[Remark 10 in sec. 8.2]{BoschLectures}. In particular, for every formal model $\frX$ there is a formal model $\frX'$ and admissible formal blow-up morphisms $\frX' \ra \frX$ and $\frX' \ra \frX_0$.
\end{para}

\begin{para}\label{group_actions} {\it Group actions.}  We equip $X_0$ with the translation action on the {\it right} by $\bbG_0$, i.e.,

$$X_0 \times_{\Spec(\fro)} \bbG_0 \ra X_0 \;, \;\; (\bbB_0g,h) \mapsto \bbB_0 g h \;.$$

\vskip8pt

The right action of $\bbG_0$ on $X_0$ induces a right action\footnote{We remark that the flag schemes, or flag varieties, considered in \cite{PSS4} and \cite{HS17} are also equipped with right group actions. This will be of some importance later when we consider certain ring homomorphisms. Namely, those ring homomorphisms are indeed homomorphisms and not anti-homomorphisms, cf. \ref{global_sections_tcD}.} of $\bbG$ on $\bbX$. We fix once and for all a very ample line bundle $\cO_{X_0}(1)$ on $X_0$ over $\Spec(\fro)$.
\end{para}

\subsection{Preliminaries on blow-ups of the flag scheme \texorpdfstring{$X_0$}{}} \label{ample_sh} Let $\pr: X \ra X_0$ be an admissible blow-up, and let $\cI \sub X_0$ be the ideal sheaf that is blown up.
The inverse image ideal sheaf $\pr^{-1}(\cI) \cdot \cO_{X}$ is an invertible sheaf on $X$ which we denote by $\cO_{X/X_0}(1)$, cf. \cite[ch. II, 7.13]{HartshorneA}. By \cite[remark after 8.1.3]{EGA_II} the blow-up morphism is projective, and $X$ is thus itself projective over $\fro$.

\vskip8pt

\begin{lemma}\label{v_ample_sh_lemma} There is $a_0\in \Z_{>0}$ such that the line bundle

$$\cL_{X} = \cO_{X/X_0}(1) \otimes \pr^*\Big(\cO_{X_0}(a_0)\Big)$$

\vskip8pt

on $X$ is very ample over $\Spec(\fro)$, and it is very ample over $X_0$.
\end{lemma}

\begin{proof} By \cite[ch. II, ex. 7.14 (b)]{HartshorneA}, the sheaf

$$\cL = \cO_{X/X_0}(1) \otimes \pr^*\Big(\cO_{X_0}(a_0)\Big)$$

\vskip8pt

is very ample on $X$ over $\Spec(\fro)$ for suitable $a_0 > 0$. We fix such an $a_0$. By \cite[4.4.10 (v)]{EGA_II} it is then also very ample over $X_0$. \end{proof}

\begin{para} {\it Twisting by $\cL_X$.} We fix $a_0 \in \Z_{>0}$ such that the line bundle $\cL_X$ from \ref{v_ample_sh_lemma} is very ample over $\Spec(\fro)$. In the following we will always use this line bundle to `twist' $\cO_{X}$-modules. If $\cF$ is a $\cO_{X}$-module and $r \in \Z$ we thus put

$$\cF(r) = \cF \otimes_{\cO_{X}} \cL_X^{\otimes r} \;.$$

\vskip8pt

Some caveat is in order when we deal with sheaves which are equipped with both a left and a right $\cO_{X}$-module structure (which may not coincide). For instance, if $\cF_d = \sD^{(k,m)}_{X,d}$, cf. \ref{filtrations}, then we let

$$\cF_d(r) = \sD^{(k,m)}_{X,d}(r) = \sD^{(k,m)}_{X,d} \otimes_{\cO_{X}} \cL_X^{\otimes r} \;,$$

\vskip8pt

where we consider $\cF_d = \sD^{(k,m)}_{X,d}$ as a {\it right} $\cO_{X}$-module. Similarly we put

$$\sD^{(k,m)}_X(r) = \sD^{(k,m)}_{X} \otimes_{\cO_{X}} \cL_X^{\otimes r} \;,$$

\vskip8pt

where we consider $\sD^{(k,m)}_{X}$ as a {\it right} $\cO_{X}$-module. Then we have $\sD^{(k,m)}_{X}(r) = \varinjlim_d
\cF_d(r)$. When we consider the associated graded sheaf of $\sD^{(k,m)}_{X}(r)$, it is with respect to the filtration by
the $\cF_d(r)$. The sheaf $\sD^{(k,m)}_{X}(r)$ is a coherent left $\sD^{(k,m)}_{X}$-module since it is locally
isomorphic with $\sD^{(k,m)}_{X}$ as $\sD^{(k,m)}_{X}$-module.
\end{para}

\begin{lemma}\label{tcT_lemma} Let $\pr: X \ra X_0$ and $\pr': X' \ra X_0$ be admissible blow-ups of $X_0$, and let $\pi: X' \ra X$ be a morphism over $X_0$, i.e., $\pr \circ \pi = \pr'$. Furthermore, let $k,k'$ be two non-negative integers (not necessarily greater or equal to $k_X$ or $k_{X'}$).

\vskip8pt

(i) In the case $\pi_* \cO_{X'} = \cO_X$, one has

$$\vpi^{k'-k}\sT_{X,k} = {\rm \pi}_*(\sT_{X',k'})$$

as subsheaves of $\sT_{X} \otimes_\fro L$ (cf. \ref{tangent_sheaf_with_congr_level} for the definition of $\sT_{X,k}$).

\vskip8pt

(ii) The group action of $\bbG_0$ on $X_0$ induces a morphism $\frg_\fro \ra H^0(X_0, \sT_{X_0})$ of Lie algebras over $\fro$. This map induces an $\cO_{X_0}$-linear map $\alpha: \cO_{X_0} \otimes_\fro \frg_\fro \ra \sT_{X_0}$. The map $\vpi^k \pr^* \alpha: \cO_X \otimes_\fro \vpi^k\frg_\fro \ra \sT_{X,k}$ is an $\cO_X$-linear map which in turn induces a morphism $\vpi^k \frg_\fro \ra H^0(X, \sT_{X,k})$ of Lie algebras over $\fro$.

\vskip8pt

(iii) If $X$ is normal, then $\pi_* \cO_{X'} = \cO_X$. This holds, in particular, if $X = X_0$ and $\pi$ is the blow-up morphism $X' \ra X_0$.
\end{lemma}

\begin{proof} The assertion (i) follows from the projection formula and the fact that

$$\vpi^{k'-k}{\rm \pi}^*\sT_{X,k}=\sT_{X',k'}$$

\vskip8pt

by definition of the sheaves, if $k' \geq k$. Otherwise, we have ${\rm \pi}^*\sT_{X,k}=\vpi^{k-k'}\sT_{X',k'}$.

\vskip8pt

(ii) The first assertion is \cite[II, \S 4, 4.4]{DemazureGabriel} (note that in loc. cit. the map is an anti-homomorphism because in loc. cit. the group acts from the left on the scheme in question). The remaining assertions are immediate consequences of the first assertion.

\vskip8pt

(iii) Let $\cI \sub \cO_{X_0}$ be the ideal that is blown up to obtain $X$. The sheaf $\cS = \bigoplus_{d \ge 0} \cI^d$ is naturally a subsheaf of the sheaf of polynomial algebras $\cO_{X_0}[t]$, and is thus a sheaf of integral domains, since $X_0$ is integral. Therefore, $X$ is integral too. The same holds for $X'$. Since $\pr'$ is projective (cf. the beginning of this subsection), and since $\pr \circ \pi = \pr'$, we conclude that $\pi$ is projective too, by \cite[5.5.5]{EGA_II}. Now let $\cJ \sub \cO_X$ be the ideal sheaf which is blown up to obtain $X'$. As $\cJ$ contains a power of $\vpi$, the vanishing locus of $\cJ$ is contained in the special fiber of $X$, and $\pi$ is hence an isomorphism on the generic fibers, and hence birational. $\pi$ is thus a projective birational morphism between noetherian integral schemes. The assertion follows now from Zariski's Main Theorem, cf. \cite[11.4 in ch. III]{HartshorneA} and its proof. \end{proof}

\vskip8pt

We remind the reader of our convention \ref{convention} regarding the congruence level $k$.

\begin{prop}\label{graded_tcD} Let $\pi: X' \ra X$ be a morphism over $X_0$ of admissible blow-ups of $X_0$ (as in \ref{tcT_lemma}). If $k \ge \max\{k_X,k_{X'}\}$ and if $\pi_*\cO_{X'}=\cO_X$, then $ \pi_*\left(\sD^{(k,m)}_{X'}\right) = \sD^{(k,m)}_{X}$.
\end{prop}

\begin{proof} The sheaves $\sD^{(k,m)}_{X_0,d}$ of differential operators of order $\le d$ are locally free of finite rank, and so are the sheaves $\sD^{(k,m)}_{X,d}$, by construction. We can thus apply the projection formula and get

$$\pi_*\Big(\sD^{(k,m)}_{X',d}\Big) = \sD^{(k,m)}_{X,d} \;.$$

\vskip8pt

The claim follows because the direct image commutes with inductive limits on a noetherian space. \end{proof}

\subsection{Global sections of \texorpdfstring{$\sD^{(k,m)}_{X}$}{}, \texorpdfstring{$\hsD^{(k,m)}_{\frX}$}{}, and \texorpdfstring{$\sD^\dagger_{\frX,k}$}{}}\label{global_sec}

\begin{para}\label{groups} {\it Congruence group schemes.} We let $\bbG(k)$ denote the $k$-th scheme-theoretic congruence subgroup of the group scheme $\bbG_0$ \cite[sec. 1]{WW80}, \cite[2.8]{YuSmoothModels}. So $\bbG(0)=\bbG_0$ and $\bbG(k+1)$ equals the dilatation, in the sense of \cite[3.2]{BLR}, of the trivial subgroup of $\bbG(k) \times_{\Spec(\fro)} \Spec(\bbF_q)$ on $\bbG(k)$. In particular, if $\bbG(k) = \Spec\; \fro[t_1,\ldots ,t_N]$ with a set of parameters $t_i$ for the unit section of $\bbG(k)$, then $\bbG(k+1)=\Spec \; \fro[\frac{t_1}{\varpi},\ldots ,\frac{t_N}{\varpi}]$. The $\fro$-group scheme $\bbG(k)$ is again smooth, has Lie algebra equal to $\varpi^k\frg_{\fro}$ and its generic fibre coincides with the generic fibre of $\bbG_0$.
\end{para}

\begin{para}\label{div_power_env_algs} {\it Divided power enveloping algebras.} We denote by $D^{(m)}(\bbG(k))$ the distribution algebra of the smooth $\fro$-group scheme $\bbG(k)$ of level $m$ \cite[4.1.3]{HS13}.
It is noetherian and admits the following explicit description. Let $\frg_\fro=\frn^{-}_\fro\oplus\frt_\fro\oplus\frn_\fro$ be a triangular decomposition of $\frg_\fro$. We fix basis elements $(f_i),(h_j)$ and $(e_i)$ of the $\fro$-modules $\frn^{-}_\fro, \frt_\fro$ and $\frn_\fro$ respectively. Then $D^{(m)}(\bbG(k))$ equals the $\fro$-subalgebra of $U(\frg) = U_\fro(\frg_\fro) \otimes_\fro L$ generated by the elements

\begin{numequation}\label{generators_n}
q^{(m)}_{\underline{\nu}}! \frac{(\vpi^k \underline{e})^{\underline{\nu}}}{\underline{\nu}!}  \cdot q^{(m)}_{\underline{\nu}'}! \vpi^{k |\underline{\nu}'|}\binom{\underline{h}}{\underline{\nu}'} \cdot q^{(m)}_{\underline{\nu}''}! \frac{(\vpi^k \underline{f})^{\underline{\nu}''}}{\underline{\nu}''!} \;.
\end{numequation}

An element of this type has order $d = |\unu|+|\unu'|+|\unu''|$, and the $\fro$-span of elements of order $\le d$ form an $\fro$-submodule $D^{(m)}_d(\bbG(k)) \sub D^{(m)}(\bbG(k))$, and $D^{(m)}(\bbG(k))$ becomes in this way a filtered $\fro$-algebra. In the case of the group ${\rm GL}_2$ we considered the same algebra in \cite[3.3.1]{PSS4} (denoted differently there). $D^{(m)}(\bbG(k))$ is a noetherian ring \cite[4.1.13]{HS13}, and so is its $p$-adic completion $\widehat{D}^{(m)}(\bbG(k))$ \cite{McConnell78}. The ring $D^{(m)}(\bbG(k))$ obviously contains the enveloping algebra $U_\fro(\vpi^k \frg_\fro)$ of $\vpi^k \frg_\fro$ over $\fro$, and the inclusion $U_\fro(\vpi^k \frg_\fro) \ra D^{(m)}(\bbG(k))$ induces an isomorphism of $L$-algebras $U(\frg) \car  D^{(m)}(\bbG(k)) \otimes_\fro L$. Denote by $Z(\frg)$ the center of $U(\frg)$, and let $\theta_0: Z(\frg) \ra L$ be the character with which the center acts on the trivial one-dimensional representation of $\frg$. We are now going to use a key result by Beilinson and Bernstein from \cite{BB81}.
\end{para}

\begin{prop}\label{BB} (i) Let $\pr: X \ra X_0$ be an admissible blow-up. There is a unique filtered $L$-algebra homomorphism

\begin{numequation}\label{BB_surj} Q_{X,k,L}: U(\frg) \lra H^0(X,\sD^{(k,m)}_X) \otimes_\fro L \;,
\end{numequation}

such that the following diagram is commutative

\begin{numequation}\label{BB_surj_diagram}
\xymatrix{
\frg \ar@{^{(}->}[d] \ar[r] & H^0(X, \sT_{X,k}) \otimes_\fro L \ar[d]\\
U(\frg) \ar@{->>}[r] & H^0(X,\sD^{(k,m)}_X) \otimes_\fro L \\
}
\end{numequation}

Here, the upper horizontal map is obtained
from the map $\vpi^k\frg_\fro \ra H^0(X, \sT_{X,k})$ in \ref{tcT_lemma} by tensoring with $L$. The vertical map on the right is induced by the canonical homomorphism of sheaves $\sT_{X,k} \ra \sD^{(k,m)}_X$.

\vskip8pt

(ii) $Q_{X,k,L}$ is surjective and its kernel is the two-sided ideal $U(\frg)\ker(\theta_0)$ so that $Q_{X,k,L}$ induces an isomorphism $U(\frg)_{\theta_0} \car H^0(X,\sD^{(k,m)}_X) \otimes_\fro L$, where $U(\frg)_{\theta_0} = U(\frg)/U(\frg)\ker(\theta_0)$.
\end{prop}

\begin{proof} We first note that by \ref{graded_tcD} and \ref{tcT_lemma} we have $\pr_*(\sD^{(k,m)}_X) = \sD^{(k,m)}_{X_0}$ and therefore $H^0(X,\sD^{(k,m)}_X) = H^0(X_0,\sD^{(k,m)}_{X_0})$. Flat base change gives us $H^0(X_0,\sD^{(k,m)}_{X_0}) \otimes_\fro L = H^0(\bbX,\sD_\bbX) \otimes_\fro L$, where $\sD_\bbX$ is the sheaf of differential operators on the flag variety $\bbX$. The existence and uniqueness of $Q_{X,k,L}$ follow from the universal property of $U(\frg)$. The assertions about the surjectivity and kernel of this map are simply restatements of \cite[Lemme 3]{BB81}, cf. also \cite[11.2.2]{Hotta}. \end{proof}

\vskip8pt

\begin{prop}\label{global_sections_tcD}  Let $\pr: X \ra X_0$ be an admissible blow-up. There is a canonical homomorphism of filtered $\fro$-algebras

\begin{numequation}\label{xi_uncompl} Q^{(k,m)}_X: D^{(m)}(\bbG(k)) \lra H^0(X, \sD^{(k,m)}_X) \;,
\end{numequation}

such that the following diagram is commutative

\begin{numequation}\label{xi_diagram}
\xymatrix{
D^{(m)}(\bbG(k)) \ar@{^{(}->}[d] \ar[r] & H^0(X,\sD^{(k,m)}_X) \ar[d]\\
U(\frg) \ar@{->>}[r] & H^0(X,\sD^{(k,m)}_X) \otimes_\fro L \\
}
\end{numequation}

Here, the lower horizontal map is the map $Q_{X,k,L}$ in \ref{BB_surj}. In particular, the map $Q^{(k,m)}_X$ induces an isomorphism

$$\Big(D^{(m)}(\bbG(k)) \otimes_\fro L\Big)/\Big(D^{(m)}(\bbG(k)) \otimes_\fro L\Big)\ker(\theta_0) \car H^0(X, \sD^{(k,m)}_X) \otimes_\fro L \;.$$

\vskip8pt

\end{prop}

\begin{proof} We begin with a remark on sheaves of filtered $\fro$-algebras and their associated sheaves of Rees rings. This material, in the setting of rings, instead of sheaves of rings, is well-known (cf. \cite[ch. 12, \S 6]{MCR}, \cite[ch. I, \S 4]{LiHuishi}), and its version for sheaves is entirely analogous. A sheaf of filtered $\fro$-algebra $\cA$ with positive filtration $(F_d \cA)_{d \ge 0}$ and $\fro \sub F_0 \cA$ gives rise to the sheaf of graded rings $R(\cA) := \oplus_{d\geq 0} F_d\cA t^d$, its associated sheaf of {\it Rees rings}. This is a sheaf of subrings of the polynomial algebra $\cA[t]$ over $\cA$. The sheaf of Rees rings is equipped with the filtration by the sheaves of subgroups $R_d(\cA) = \oplus_{i = 0}^d F_i\cA t^i \sub R(\cA)$. Specialising $R(\cA)$ in an element $\lambda \in \fro$ yields a sheaf of filtered subrings $\cA_{\lambda}$ of $\cA$. Precisely, $\cA_\lambda$ equals the image under the homomorphism of sheaves of rings $R(\cA) \ra \cA, t \mapsto \lambda$. We equip $\cA_\lambda = \sum_{d\geq 0} \lambda^{d} F_d \cA$ with the filtration induced by $\cA$.

\begin{claim}\label{claim} If the sheaf of graded rings ${\rm gr}(\cA)$, associated with the filtration $(F_d \cA)_d$, is flat over $\fro$, then for all $d$

$$F_d(\cA_\lambda)=\sum_{0 \le i \le d} \lambda^{i}F_i \cA \;.$$

\vskip8pt
\end{claim}

{\it Proof of the claim.} The right hand side is obviously contained in the left hand side. So we only have to show the other inclusion. Consider an element $x \in F_d(\cA_\lambda)$, and write it as  $x = \sum_{i=0}^n \lambda^i x_i$ with $n \ge d$ and $x_i \in F_i \cA$ for $i = 0, \ldots, n$. Put $x' = \sum_{i=0}^d \lambda^i x_i$. Then $x'$ is contained in the right hand
side, and it suffices to see that $x'' = x - x'$ lies in the right hand
side too. Set $y = \sum_{i=d+1}^n\lambda^{i-d-1} x_i$ so that $x'' = \lambda^{d+1} y$. If $y$ does not lie in $F_d \cA$, then choose $j>d$ such that $y \in F_j\cA \setminus F_{j-1}\cA$. Then the symbol $\sigma(y) := y + F_{j-1}\cA$ in $F_j\cA / F_{j-1}\cA$ is nonzero, but
$\lambda^{d+1}\sigma(y) = \lambda^{d+1}y + F_{j-1}\cA = x'' + F_{j-1}\cA$ is zero in ${\rm gr}_j \cA$, since $x''$ lies in $F_d(\cA_\lambda) \subset F_d \cA \subset F_{j-1}\cA$. Because we assume that ${\rm gr}(\cA)$ is flat over $\fro$, this implies that $\lambda^{d+1} = 0$, i.e., $\lambda = 0$. But then $x = x_0$ is contained in the right hand side. On the other hand, if $y$ lies in $F_d(\cA)$, then $x'' = \lambda^{d+1}y$ lies in the right hand side. \qed

\vskip8pt

For fixed $\lambda$, the formation of $\cA_{\lambda}$ is functorial in $\cA$. We now consider the canonical homomorphism of filtered $\fro$-algebras

$$Q_m: D^{(m)}(\bbG(0)) \lra H^0(X_0,\sD^{(m)}_{X_0})$$

\vskip8pt

appearing in \cite[4.4.5]{HS13}. It comes by functoriality from the right $\bbG_0$-action on $X_0$. After tensoring with $L$ the morphism $Q_m$ is equal to the map $Q_{X_0,0,L}$ of \ref{BB_surj}. Given an $\fro$-algebra $A$ we will denote by $\underline A$ the corresponding constant sheaf on $X_0$. The map $Q_m$ then gives rise to an homomorphism of associated constant sheaves of filtered $\fro$-algebras

$${\underline Q}_m: \underline{D^{(m)}(\bbG(0))} \lra \underline{H^0(X_0,\sD^{(m)}_{X_0})} \;.$$

\vskip8pt

We compose this map with the canonical map of sheaves $\underline{H^0(X_0,\sD^{(m)}_{X_0})} \ra \sD^{(m)}_{X_0}$ and obtain a homomorphism of sheaves of filtered $\fro$-algebras

$$\underline{D^{(m)}(\bbG(0))} \lra  \sD^{(m)}_{X_0} \;.$$

\vskip8pt

To this map we now apply the remark regarding Rees rings (and sheaves of Rees rings) we made in the beginning. That is, we pass to the sheaves of Rees rings associated with the filtrations (on the domain and target of this map), and then we specialize the parameter on both sides to $t = \varpi^k$. This gives a filtered homomorphism of sheaves of filtered $\fro$-algebras

$$\underline{D^{(m)}(\bbG(0))}_{\varpi^k} \lra \Big(\sD^{(m)}_{X_0}\Big)_{\varpi^k} \;.$$

\vskip8pt

The definition of the filtration on $D^{(m)}(\bbG(k))$, cf. \ref{div_power_env_algs}, together with \ref{claim}, imply that $D^{(m)}(\bbG(0))_{\varpi^k} = D^{(m)}(\bbG(k))$ as filtered subrings of $D^{(m)}(\bbG(0))$, and it follows from this that there is a canonical identification

$$\underline{D^{(m)}(\bbG(0))}_{\varpi^k} = \underline{D^{(m)}(\bbG(k))} \;.$$

\vskip8pt

The explicit description of sections over open affine subsets $U_0 \sub X_0$ in \ref{explicit_description_1}, together with \ref{claim}, imply that the sheaf $(\sD^{(m)}_{X_0})_{\varpi^k}$ coincides with $\sD^{(k,m)}_{X_0}$ as filtered subsheaves of $\sD^{(m)}_{X_0}$. We obtain thus a homomorphism of sheaves of filtered $\fro$-algebras

$$\underline{D^{(m)}(\bbG(k))} \lra \sD^{(k,m)}_{X_0} \;.$$

\vskip8pt

Taking global sections we obtain a homomorphism of filtered $\fro$-algebras

$$H^0\Big(X_0, \underline{D^{(m)}(\bbG(k))}\Big) \lra H^{0}(X_0,\sD^{(k,m)}_{X_0})$$

\vskip8pt

As $X_0$ is connected, the domain of this map is $D^{(m)}(\bbG(k))$. Moreover, in the situation considered here, we can apply \ref{tcT_lemma} (iii) and get that $\pr_* \cO_X = \cO_{X_0}$. We can thus use \ref{graded_tcD} and conclude that $H^0(X, \sD^{(k,m)}_{X}) = H^0(X_0, \sD^{(k,m)}_{X_0})$. This gives the homomorphism of filtered $\fro$-algebras

$$Q^{(k,m)}_X: D^{(m)}(\bbG(k)) \ra H^0(X, \sD^{(k,m)}_{X}) \;,$$

\vskip8pt

as claimed. The last assertion follows now from \ref{BB} (ii). \end{proof}

\vskip8pt

We put $\cA^{(k,m)}_X = \cO_{X}\otimes_{\fro}D^{(m)}(\bbG(k))$, and we equip this sheaf with the skew ring multiplication (smash product) coming from the action of $D^{(m)}(\bbG(k))$ on $\cO_{X}$ via $Q^{(k,m)}_X$. This is a sheaf of associative $\fro$-algebras.\footnote{The point here is that the algebra $D^{(m)}(\bbG(k))$ is an integral form of the universal enveloping algebra $U(\frg)$ and its action on $\cO_X$ is induced by the usual action of $U(\frg)$ on $\cO_{X,\bbQ}$. Since elements from $\frg$ act as derivations one may form Sweedler's smash product algebra $\cO_{X,\bbQ}\#U(\frg)$ \cite[7.2]{Sweedler}, cf. also \cite[1.7.10]{MCR}. It is associative and hence so is the subalgebra $\cA^{(k,m)}_X$.} This sheaf has a natural filtration whose associated graded equals the $\cO_{X}$-algebra $\cO_{X}\otimes_{\fro}\Sym^{(m)}(\Lie(\bbG(k)))$ \cite[Cor. 4.4.7 (iii)]{HS13}. In particular,
$\cA^{(k,m)}_X$ has noetherian sections over open affines. The map $Q_X^{(k,m)}$ induces a unique $\cO_X$-linear map
$\xi^{(k,m)}_X: \cA^{(k,m)}_X \ra \sD^{(k,m)}_X$ which is also a morphism of sheaves of filtered $\fro$-algebras.

\begin{prop}\label{prop-auxiliaryI}
The homomorphism $\xi^{(k,m)}_X: \cA^{(k,m)}_X \ra \sD^{(k,m)}_X$ is surjective.
\end{prop}

\begin{proof} We are going to adapt the argument of \cite[4.4.8.2 (ii)]{HS13}. The homomorphism is filtered. Applying $\Sym^{(m)}$ to the surjection in (ii) of \ref{tcT_lemma} we obtain a surjection

$$\cO_{X}\otimes_{\fro}\Sym^{(m)}(\Lie(\bbG(k)) \ra\Sym^{(m)}(\sT_{X,k})$$

\vskip8pt

which equals the associated graded homomorphism by \ref{finite_tDm}. Hence the homomorphism is surjective as claimed. \end{proof}

\vskip8pt

\begin{prop}\label{prop-auxiliaryII} Let $\cM$ be a coherent left $\cA^{(k,m)}_X$-module.

\vskip8pt

(i) $H^0(X,\cA^{(k,m)}_X)=D^{(m)}(\bbG(k))$.

\vskip8pt

(ii) There is a surjection $\cA^{(k,m)}_X(-r)^{\oplus s} \ra\cM$ of $\cA^{(k,m)}_X$-modules for suitable $r,s\geq 0$.

\vskip8pt

(iii) For any $i\geq 0$ the group $H^{i}(X,\cM)$ is a finitely generated $D^{(m)}(\bbG(k))$-module.

\vskip8pt

(iv) The ring $H^{0}(X,\sD^{(k,m)}_X)$ is a finitely generated $D^{(m)}(\bbG(k))$-module and hence noetherian.
\end{prop}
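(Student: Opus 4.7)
The plan is to prove the four assertions in sequence, using two main inputs: Serre's theorems on projective schemes (finiteness of cohomology, and generation of coherent sheaves by global sections after twisting) applied to the projective $\fro$-scheme $X$, which is projective via the very ample line bundle $\cL_X$ of Lemma~\ref{v_ample_sh_lemma}; and the filtration $\cA^{(m)}_{X,k}=\bigcup_d \cA^{(m)}_{X,k;d}$, where $\cA^{(m)}_{X,k;d}:=\cO_X\otimes_\fro F_d D^{(m)}(\bbG(k))$ and $F_d D^{(m)}(\bbG(k))$ is the free $\fro$-submodule of finite rank spanned by the monomials \eqref{generators_n} of total degree at most $d$.

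For (i), I would first identify $H^0(X,\cA^{(m)}_{X,k;d})=H^0(X,\cO_X)\otimes_\fro F_d D^{(m)}(\bbG(k))$ and pass to the colimit in $d$. The identity $H^0(X,\cO_X)=\fro$ follows from properness and integrality of $X$ over $\fro$ together with the equality of generic fibres $X\otimes_\fro L=X_0\otimes_\fro L$ (the blown-up ideal contains a power of $\varpi$): any global section of $\cO_X$ injects into $H^0(X\otimes_\fro L,\cO)=L$ and is integral over $\fro$, hence lies in $\fro$. For (ii), given a coherent $\cA^{(m)}_{X,k}$-module $\cM$, I would write it as a filtered union of coherent $\cO_X$-submodules; by quasi-compactness of $X$ and local finite generation over $\cA^{(m)}_{X,k}$, one such coherent $\cO_X$-submodule $\cN$ already satisfies $\cA^{(m)}_{X,k}\cdot\cN=\cM$. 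Serre's theorem applied to $\cN$ and $\cL_X$ then produces a surjection $\cO_X(-r)^{\oplus s}\twoheadrightarrow\cN$ for $r,s\gg 0$; composing with the multiplication map $\cA^{(m)}_{X,k}\otimes_{\cO_X}\cN\to\cM$ yields the desired surjection $\cA^{(m)}_{X,k}(-r)^{\oplus s}\twoheadrightarrow\cM$.

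For (iii), I would proceed by descending induction on $i$, with base case the vanishing $H^i(X,-)=0$ for $i>\dim X$. Given $\cM$, use (ii) to form a short exact sequence $0\to\cK\to\cA^{(m)}_{X,k}(-r)^{\oplus s}\to\cM\to 0$ of coherent $\cA^{(m)}_{X,k}$-modules (coherence of $\cK$ by Proposition~\ref{prop-auxiliaryI}). The associated long exact cohomology sequence exhibits $H^i(X,\cM)$ as an extension of a submodule of $H^{i+1}(X,\cK)$, finitely generated over $D^{(m)}(\bbG(k))$ by induction, by a quotient of $H^i(X,\cA^{(m)}_{X,k}(-r)^{\oplus s})$. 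For the latter, cohomology on the noetherian space $X$ commutes with the filtered colimit $\cA^{(m)}_{X,k}(-r)=\varinjlim_d\cA^{(m)}_{X,k;d}(-r)$, and each $\cA^{(m)}_{X,k;d}(-r)$ is a finite direct sum of copies of $\cO_X(-r)$, so
\[ H^i(X,\cA^{(m)}_{X,k}(-r))=H^i(X,\cO_X(-r))\otimes_\fro D^{(m)}(\bbG(k)), \]
which is finitely generated over $D^{(m)}(\bbG(k))$ since $H^i(X,\cO_X(-r))$ is finitely generated over $\fro$ by Serre's finiteness theorem. Noetherianity of $D^{(m)}(\bbG(k))$ closes the induction.

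Finally, for (iv), I apply (iii) at $i=0$ to $\cM=\tcD^{(m)}_{X,k}$, which is coherent over $\cA^{(m)}_{X,k}$ by Proposition~\ref{prop-auxiliaryI}: $H^0(X,\tcD^{(m)}_{X,k})$ is then a finitely generated left $D^{(m)}(\bbG(k))$-module. Since every left ideal of the ring $H^0(X,\tcD^{(m)}_{X,k})$ is in particular a left $D^{(m)}(\bbG(k))$-submodule, noetherianity of $D^{(m)}(\bbG(k))$ propagates to ACC on left ideals, so $H^0(X,\tcD^{(m)}_{X,k})$ is left noetherian. The main subtlety I expect is in (iii): verifying the $D^{(m)}(\bbG(k))$-equivariance of the long exact sequence and identifying the module structure on $H^i(X,\cA^{(m)}_{X,k}(-r))$ with the one induced by left multiplication on $\cA^{(m)}_{X,k}$.
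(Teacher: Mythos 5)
Your proof is correct and follows the same route the paper takes by citing \cite[A.2.6.1]{HS13}: filter $\cA^{(m)}_{X,k}$ by the finite free pieces $\cO_X\otimes_\fro F_dD^{(m)}(\bbG(k))$, apply Serre's theorems for the ample line bundle $\cL_X$ on the projective $\fro$-scheme $X$, and run a descending induction on the cohomological degree using the surjections from (ii). The one subtlety you flag in (iii) is handled by the standard observation that $P\otimes f\mapsto (1\otimes P)(f\otimes 1)$ is a filtered isomorphism $D^{(m)}(\bbG(k))\otimes_\fro\cO_X\car\cA^{(m)}_{X,k}$ of $(D^{(m)}(\bbG(k)),\cO_X)$-bimodules (triangular with respect to the order filtration), so that $H^i(X,\cA^{(m)}_{X,k}(-r))\cong D^{(m)}(\bbG(k))\otimes_\fro H^i(X,\cO_X(-r))$ carries exactly the left-multiplication module structure and is finitely generated.
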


\begin{proof} Points (i)-(iii) are a restatement of \cite[3.3]{HS17}. By \ref{prop-auxiliaryI} the sheaf $\sD^{(k,m)}_X$ is a coherent $\cA^{(k,m)}_X$-module to which we can apply assertion (iii) with $i=0$. This proves statement (iv). \end{proof}

\vskip8pt

\begin{para}\label{passing_to_completion} {\it Passing to the completion.}  We now consider the formal scheme $\frX$ which is the formal completion of $X$ along its special fiber. We are interested in certain properties of the sheaves of rings $\hsD^{(k,m)}_{\frX}$ and $\sD^\dagger_{\frX,k}$ introduced in~\ref{rings_diff_op}. Put

$$\widehat{D}^{(m)}(\bbG(k))_{L,\theta_0} = \Big(\widehat{D}^{(m)}(\bbG(k))\otimes_\fro L \Big)/\Big(\widehat{D}^{(m)}(\bbG(k))\otimes_\fro L \Big)\ker(\theta_0) \;.$$

\vskip8pt

This is the same central reduction considered in \cite[sec. 3.3.1]{PSS4} for the group ${\rm GL}_2$.

\vskip8pt

In the proposition below, and in the remainder of this paper, certain rigid-analytic `wide open' groups $\Gkc$ will be important. To define them, consider first the formal completion $\frG(k)$ of the group scheme $\bbG(k)$ along its special fiber, which is a formal group scheme (of topologically finite type) over $\Spf(\fro)$. Then let $\widehat{\frG}(k)^\circ$ be the completion of ${\frG}(k)$ along its unit section $\Spf(\fro) \ra \frG(k)$, and denote by $\Gkc$ its associated rigid-analytic space, which is a rigid-analytic group.

\vskip8pt

Wide-open rigid-analytic groups play a special role in M. Emerton's approach to locally analytic representations of $p$-adic groups, cf. \cite{EmertonA}. The {\it analytic distribution algebra} of $\Gkc$ is defined to be the continuous dual space of the space of rigid-analytic functions on $\Gkc$, i.e.,

$$\cD^{\rm an}(\Gkc) := \cO_{\Gkc}(\Gkc)'_b = \Hom_L^{\rm cont}\Big(\cO_{\Gkc}(\Gkc),L\Big)_b \;,$$

\vskip8pt

which is equipped with the strong topology. This is a topological $L$-algebra of compact type. In \cite[sec. 5.2]{EmertonA} Emerton gives a description of this ring as the inductive limit of completions of the rings $\widehat{D}^{(m)}(\bbG(k))\otimes_\fro L$, i.e.,

\begin{numequation}\label{Emerton_descr} \cD^{\rm an}(\Gkc) \simeq \varinjlim_m \widehat{D}^{(m)}(\bbG(k))\otimes_\fro L \;.
\end{numequation}

This is an isomorphism of topological $L$-algebras of compact type, cf. \cite[5.2.6, 5.3.11]{EmertonA}, \cite[5.3.1]{HS13}.
\end{para}

\vskip8pt

\begin{prop}\label{global_sec_tsD} (i) The homomorphism $Q^{(k,m)}_X$ induces an algebra isomorphism

$$\widehat{D}^{(m)}(\bbG(k))_{L,\theta_0} \car H^0(\frX,\hsD^{(k,m)}_{\frX,\Q}) \,.$$

\vskip8pt

(ii) $H^0(\frX,\sD^\dagger_{\frX,k})$ and $\cD^{\rm an}(\Gkc)_{\theta_0}$ are canonically isomorphic topological $L$-algebras.
\end{prop}

\begin{proof} (i) For the purpose of this proof put $\ker(\theta_0)_\fro = D^{(m)}(\bbG(k)) \cap \ker(\theta_0)$. Because $D^{(m)}(\bbG(k))$ is an $\fro$-form of $U(\frg)$, it follows that $\ker(\theta_0)_\fro \otimes_\fro L = \ker(\theta_0)$. Now set $D^{(m)}(\bbG(k))_{\theta_0} := D^{(m)}(\bbG(k))/D^{(m)}(\bbG(k))\ker(\theta_0)_\fro$ and

$$D^{(m)}(\bbG(k))_{L,\theta_0} := \Big(D^{(m)}(\bbG(k)) \otimes_\fro L\Big)/\Big(D^{(m)}(\bbG(k)) \otimes_\fro L\Big)\ker(\theta_0) \;.$$

\vskip8pt

We then have $D^{(m)}(\bbG(k))_{\theta_0} \otimes_\fro L = D^{(m)}(\bbG(k))_{L,\theta_0}$. By \ref{global_sections_tcD}, the homomorphism of $\fro$-algebras $Q^{(k,m)}_X$ induces a homomorphism

$$Q^{(k,m)}_{X, \theta_0}: D^{(m)}(\bbG(k))_{\theta_0} \ra H^0(X,\sD^{(k,m)}_X) \;,$$

\vskip8pt

and the induced morphism

$$Q^{(k,m)}_{X, \theta_0} \otimes_\fro L: D^{(m)}(\bbG(k))_{L,\theta_0} \ra H^0(X,\sD^{(k,m)}_X) \otimes_\fro L$$

\vskip8pt

is an isomorphism of $L$-algebras. By \ref{prop-auxiliaryII} the ring $H^0(X,\sD^{(k,m)}_X)$ is a finitely generated $D^{(m)}(\bbG(k))_{\theta_0}$-module. We have now shown that all assumption in \cite[Lemma 3.5]{HS17} hold in the context considered here. By the very assertion of \cite[Lemma 3.5]{HS17} we find that $Q^{(k,m)}_{X, \theta_0}$ gives rise to an isomorphism

$$\widehat{D}^{(m)}(\bbG(k))_{L,\theta_0} \car \widehat{H}^0(X,\sD^{(k,m)}_X) \otimes_\fro L \;,$$

\vskip8pt

where $\widehat{H}^0(X,\sD^{(k,m)}_X)$ is the $p$-adic completion of $H^0(X,\sD^{(k,m)}_X)$. By \ref{completion}, we have a canonical isomorphism $\widehat{H}^0(X,\sD^{(k,m)}_X) \simeq H^0(\frX,\hsD^{(k,m)}_\frX)$. (We note that his does not introduce a circular argument, as section \ref{loc_flag_var} is only about sheaves of differential operators and their modules, and there is no connection made to distribution algebras.)

\vskip8pt

(ii) Follows from (i) and the isomorphism \ref{Emerton_descr} \end{proof}

\section{Localization on \texorpdfstring{$\frX$}{} via \texorpdfstring{$\sD^\dagger_{\frX,k}$}{}}\label{loc_n}
\label{loc_flag_var}
The general line of arguments developed here follows fairly closely \cite{NootHuyghe09}. As in the previous section, $\pr: X \ra X_0$ denotes an admissible blow-up of $X_0 = \bbB_0 \bksl \bbG_0$, and $\frX \ra \frX_0$ is the induced morphism between the completions of $X$ and $X_0$ along their special fibers, respectively. The number $k \geq k_X = k_\frX$, cf. \ref{defk_X}, \ref{defkX}, is fixed throughout this section so that the sheaves of rings $\sD^{(k,m)}_X$, $\hsD^{(k,m)}_\frX$, and $\sD^\dagger_{\frX,k}$ are defined.
\vskip12pt

\subsection{Cohomology of coherent \texorpdfstring{$\sD^{(k,m)}_{X}$}{}-modules}

\begin{lemma}\label{van_coh} Let $\cE$ be an abelian sheaf on $X$. For all $i >\dim X$ one has $H^i(X,\cE) =0$.
\end{lemma}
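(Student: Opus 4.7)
The plan is to reduce the statement to Grothendieck's general vanishing theorem for sheaf cohomology on a noetherian topological space. Since $X$ is an admissible blow-up of the noetherian scheme $X_0$, the scheme $X$ is itself noetherian, and hence its underlying topological space is a noetherian topological space. Moreover, the Krull dimension of this topological space agrees with $\dim X$ as defined for the scheme.

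I would then invoke Grothendieck's theorem, which asserts that on any noetherian topological space $Y$ of dimension $n$, one has $H^{i}(Y, \cF) = 0$ for every sheaf of abelian groups $\cF$ on $Y$ and every $i > n$. A convenient reference is \cite[Ch.~III, Thm.~2.7]{HartshorneA} (originally due to Grothendieck). Applied to $Y = X$ and $\cF = \cE$, this immediately gives the claimed vanishing for $i > \dim X$.

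No serious obstacle is anticipated, since the hypothesis of the cited theorem only requires $X$ to be noetherian as a topological space and to have the stated dimension, both of which are built into the setup of admissible blow-ups of the smooth model $X_0$. The only subtlety worth a brief sentence is that the statement concerns \emph{arbitrary} abelian sheaves (not just $\cO_X$-modules or quasi-coherent ones), so one must use the topological version of Grothendieck vanishing rather than, say, Serre vanishing on a projective scheme.
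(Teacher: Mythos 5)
Your proof is correct and is essentially identical to the paper's: both invoke Grothendieck's vanishing theorem for abelian sheaves on a noetherian topological space, citing the same reference \cite[Thm.~2.7]{HartshorneA}, with the noetherianity of $X$ being the only hypothesis to check. Nothing further is needed.
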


\begin{proof} Since the space $X$ is noetherian the result follows from Grothendieck's vanishing theorem \cite[Thm. 2.7]{HartshorneA}. \end{proof}

\vskip8pt

We recall that the sheaf $\sD^{(k,m)}_X$ has been equipped with a filtration, cf. \ref{filtrations}. We denote by ${\rm gr} \left( \sD^{(k,m)}_X \right)$ the associated sheaf of graded rings.

\vskip8pt

\begin{prop}\label{vanishing_coh_gr_Dnk} There is a natural number $r_0$ such that for all $r \ge r_0$ and all $i \ge 1$ one has

\begin{numequation}\label{vanishing_coh_gr_Dnk_disp}
H^i\left(X,{\rm gr} \left( \sD^{(k,m)}_X \right)(r) \right) = 0 \;.
\end{numequation}

\end{prop}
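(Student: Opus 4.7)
The strategy is to reduce the statement to a uniform Serre-vanishing result, where \emph{uniform} means uniform across the graded pieces.

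By Proposition~\ref{finite_tDm}(i) we have $\mathrm{gr}(\tcD^{(m)}_{X,k}) \simeq \bigoplus_{d \ge 0} \Sym^d(\tcT_{X,k})^{(m)}$, and each graded piece is a coherent $\cO_X$-module. Since $X$ is noetherian, higher sheaf cohomology commutes with arbitrary direct sums; it therefore suffices to find a single $r_0$ such that $H^i(X, \Sym^d(\tcT_{X,k})^{(m)} \otimes \cL_X^{\otimes r}) = 0$ for all $i \ge 1$, all $d \ge 0$, and all $r \ge r_0$.

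Next I would pass the problem to $X_0$ via ${\rm pr}: X \to X_0$. By Lemma~\ref{tcT_lemma1}(i), $\tcT_{X,k} = \vpi^k {\rm pr}^*\cT_{X_0}$, so (up to an irrelevant $\vpi$-twist, which has no effect on cohomology) $\Sym^d(\tcT_{X,k})^{(m)} \simeq {\rm pr}^*\Sym^d(\cT_{X_0})^{(m)}$. Using the decomposition $\cL_X = \cO_{X/X_0}(1) \otimes {\rm pr}^*\cO_{X_0}(a_0)$ from Lemma~\ref{v_ample_sh_lemma} and the projection formula, the twisted sheaf becomes ${\rm pr}^*\bigl(\Sym^d(\cT_{X_0})^{(m)} \otimes \cO_{X_0}(a_0 r)\bigr) \otimes \cO_{X/X_0}(r)$. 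Since $\cO_{X/X_0}(1)$ is ${\rm pr}$-very ample and ${\rm pr}$ is projective, there exists $r_1$, \emph{independent of $d$}, so that $R^j{\rm pr}_*\cO_{X/X_0}(r) = 0$ for $j \ge 1$ and $r \ge r_1$. The Leray spectral sequence then reduces the problem to vanishing of $H^i\bigl(X_0, \Sym^d(\cT_{X_0})^{(m)} \otimes \cO_{X_0}(a_0 r) \otimes {\rm pr}_*\cO_{X/X_0}(r)\bigr)$ for $i \ge 1$.

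Finally, on the projective $\fro$-scheme $X_0$ with its ample line bundle $\cO_{X_0}(1)$, I would invoke uniform Serre vanishing for the finitely generated graded $\cO_{X_0}$-algebra $\bigoplus_d \Sym^d(\cT_{X_0})^{(m)}$. Its finite generation comes from Berthelot's explicit local description of the level-$m$ symmetric algebra, cf.~\cite[Prop.~1.3.6]{Huyghe97}. This produces a single $r_2$ working uniformly in $d$, and $r_0 = \max(r_1, r_2)$ completes the argument.

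The main obstacle is the uniform-in-$d$ vanishing in the last step: ordinary Serre vanishing gives a bound depending on each individual coherent sheaf $\Sym^d(\cT_{X_0})^{(m)}$, whereas we require a bound independent of $d$. The standard way around this is to exploit the graded-algebra structure of $\bigoplus_d \Sym^d(\cT_{X_0})^{(m)}$, e.g.\ by Castelnuovo--Mumford regularity of finitely generated graded modules, or equivalently by identifying the graded pieces with direct images of twists of the structure sheaf on $\mathbf{Proj}_{X_0}$ of a suitable Veronese subalgebra and applying Serre vanishing on this projective $X_0$-scheme.
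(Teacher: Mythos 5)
Your opening reduction (cohomology on the noetherian space $X$ commutes with direct sums, so it suffices to prove vanishing of $H^i\bigl(X,\Sym^d(\tcT_{X,k})^{(m)}(r)\bigr)$ with a bound on $r$ uniform in $d$) is fine, and you have correctly isolated where the real difficulty lies, namely the uniformity in $d$. But the route you propose for that difficulty has a genuine gap. After the projection formula and Leray you arrive at $H^i\bigl(X_0, \Sym^d(\cT_{X_0})^{(m)} \otimes \cO_{X_0}(a_0 r) \otimes {\rm pr}_*\cO_{X/X_0}(r)\bigr)$, and the factor ${\rm pr}_*\cO_{X/X_0}(r)$ (equal to $\cI^r$ for large $r$) is a coherent sheaf that itself varies with $r$; it is not a twist by a power of an ample line bundle on $X_0$. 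Uniform Serre vanishing for the singly graded algebra $\bigoplus_d \Sym^d(\cT_{X_0})^{(m)}$ therefore does not apply to the sheaf you have produced: you would need to control the bigraded object $\bigoplus_{d,r}\Sym^d(\cT_{X_0})^{(m)}\otimes\cI^r$, i.e.\ the Rees algebra of the blown-up ideal as well, and since the regularity of $\cI^r$ grows with $r$ this is not a formal consequence of what you cite. In effect the Leray step takes you in a circle: $H^i(X_0,\cG\otimes{\rm pr}_*\cL_X^{\otimes r})=H^i(X,{\rm pr}^*\cG\otimes\cL_X^{\otimes r})$ is exactly the statement you started from.

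The paper avoids the descent to $X_0$ and runs the graded-module argument on $X$ itself, where $\cL_X$ is ample. One chooses a surjection $(\cO_{X_0})^{\oplus a}\twoheadrightarrow\cT_{X_0}$, pulls back and multiplies by $\vpi^k$ to get $(\cO_X)^{\oplus a}\twoheadrightarrow\tcT_{X,k}$, hence a surjection of graded algebras $\cC:=\Sym((\cO_X)^{\oplus a})^{(m)}\twoheadrightarrow\Sym(\tcT_{X,k})^{(m)}={\rm gr}\bigl(\tcD^{(m)}_{X,k}\bigr)$. It then proves the vanishing for an arbitrary coherent $\cC$-module $\cE$: there is a $\cC$-linear surjection $\cC(s_0)^{\oplus a_0}\twoheadrightarrow\cE$ whose source is graded with every homogeneous component a direct sum of copies of $\cO_X(s_0)$, so its twists by $\cL_X^{\otimes r}$ have no higher cohomology once $r\geq u_0-s_0$, \emph{uniformly in the degree}; a descending induction on $i$ applied to the coherent kernel then finishes the proof. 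This is precisely the ``resolution by free graded modules'' idea you gesture at in your final paragraph, but it must be carried out on $X$ with the ample bundle $\cL_X$ rather than on $X_0$ after pushforward.
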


\begin{proof} Since $\cL_X$ is very ample over $\fro$ by \ref{v_ample_sh_lemma},
the Serre theorems \cite[II.5.17/III.5.2]{HartshorneA} imply that there is a number $u_0$ such that for all $u\geq u_0$ the module $\cO_X(u)$ is generated by global sections and has no higher cohomology. After this remark we prove the proposition along the lines of
\cite[Prop. 2.2.1]{NootHuyghe09}. By \cite[1.6.1]{NootHuyghe09}, the tangent sheaf $\sT_{X_0}$ is is generated by its global sections, and hence there is an $\cO_{X_0}$-linear surjection $(\cO_{X_0})^{\oplus a} \ra\sT_{X_0}$ for a suitable natural number $a$. Applying $(\pr)^*$ and multiplying by $\varpi^k$ gives an $\cO_X$-linear surjection
$(\cO_X)^{\oplus a}\simeq \varpi^k (\cO_X)^{\oplus a} \ra \sT_{X,k}$. By functoriality we get a surjective morphism of algebras

$$ \cC := \Sym^{(m)}((\cO_{X})^{\oplus a}) \lra \Sym^{(m)}(\sT_{X,k}) \;.$$

\vskip8pt

The target of this map equals ${\rm gr}\left(\sD^{(k,m)}_{X}\right)$ according to \ref{finite_tDm}. It therefore suffices to prove the following: given a coherent $\cC$-module $\cE$, there is a number $r_0$
such that for all $r\geq r_0$ and $i\geq 1$, one has $H^{i}(X,\cE(r))=0$. Since $\cE$ is $\cC$-coherent, it is a quasi-coherent $\cO_X$-module. Because $X$ is noetherian,  $\cE$ equals the union over its $\cO_X$-coherent submodules $\cE_i$ \cite[9.4.9]{EGA_I}. Again, since $\cE$ is $\cC$-coherent and $\cC$ has noetherian sections over open affines \cite[1.3.6]{Huyghe97}, there is a $\cC$-linear surjection $\cC\otimes_{\cO_{X}} \cE_i \ra \cE$. Choose a number $s_0$ such that $\cE_i(-s_0)$ is generated by global sections. We obtain a $\cO_{X}$-linear surjection $\cO_{X}(s_0)^{\oplus a_0} \ra \cE_i$ for a number $a_0$. This yields a $\cC$-linear surjection

$$\cC_0:=\cC(s_0)^{\oplus a_0} \lra \cE \;.$$

\vskip8pt

The $\cO_X$-module $\cC_0$ is graded and each homogeneous component equals a sum of copies of $\cO_X(s_0)$. It follows that
$H^{i}(X,\cC_0(r))=0$ for all $r\geq u_0-s_0$ and all $i\geq 1$. The rest of the argument proceeds now as in \cite[2.2.1]{NootHuyghe09}. \end{proof}

\vskip8pt

\begin{cor}\label{vanishing_coh_Dnk} Let $r_0$ be the number occuring in the preceding proposition. For all $r \ge r_0$ and all $i \ge 1$ one has

\begin{numequation}\label{vanishing_coh_Dnk_disp}
H^i\left(X,\sD^{(k,m)}_{X}(r)\right) = 0 \;.
\end{numequation}
\end{cor}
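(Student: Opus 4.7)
The approach is to leverage the filtration of $\tcD_{X,k}^{(m)}$ by order of differential operators together with the vanishing just obtained for the associated graded sheaf, and then to pass to the colimit. The line bundle $\cL_X$ being locally free, tensoring with $\cL_X^{\otimes r}$ preserves exactness, so all short exact sequences in the argument remain exact after the twist $(r)$.

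First I would extract pointwise (in the filtration degree) vanishing from Proposition \ref{vanishing_coh_gr_Dnk}. Since
$$ {\rm gr}\bigl(\tcD_{X,k}^{(m)}\bigr)(r) \;=\; \bigoplus_{d\geq 0}\Sym^{d}(\tcT_{X,k})^{(m)}(r), $$
and since $X$ is noetherian (cf.\ \ref{van_coh}), cohomology commutes with the direct sum, so the vanishing of $H^{i}\bigl(X,{\rm gr}(\tcD_{X,k}^{(m)})(r)\bigr)$ for $r\geq r_0$ and $i\geq 1$ yields
$$ H^{i}\bigl(X,\Sym^{d}(\tcT_{X,k})^{(m)}(r)\bigr)=0 \quad\text{for all }d\geq 0,\ i\geq 1,\ r\geq r_0. $$

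Next, I would argue by induction on $d$ that $H^{i}\bigl(X,\tcD_{X,k;d}^{(m)}(r)\bigr)=0$ for $i\geq 1$ and $r\geq r_0$. The base case $d=0$ gives $\tcD_{X,k;0}^{(m)}=\cO_X$, whose higher twisted cohomology vanishes for $r\geq r_0$ (this is built into the choice of $r_0$, which absorbs the Serre vanishing threshold for the very ample line bundle $\cL_X$, as already used in the proof of \ref{vanishing_coh_gr_Dnk}). For the inductive step, I would tensor the order-filtration exact sequence
$$ 0\lra \tcD_{X,k;d-1}^{(m)}\lra\tcD_{X,k;d}^{(m)}\lra\Sym^{d}(\tcT_{X,k})^{(m)}\lra 0 $$
with $\cL_X^{\otimes r}$, take the long exact sequence in cohomology, and apply the induction hypothesis together with the graded-piece vanishing from the previous step.

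Finally, I would pass to the colimit $\tcD_{X,k}^{(m)}(r)=\varinjlim_{d}\tcD_{X,k;d}^{(m)}(r)$ and use that on the noetherian topological space $X$ sheaf cohomology commutes with filtered colimits of abelian sheaves, so
$$ H^{i}\bigl(X,\tcD_{X,k}^{(m)}(r)\bigr)=\varinjlim_{d}H^{i}\bigl(X,\tcD_{X,k;d}^{(m)}(r)\bigr)=0 $$
for $i\geq 1$ and $r\geq r_0$. The only potential subtlety is ensuring that the threshold $r_0$ furnished by \ref{vanishing_coh_gr_Dnk} is already large enough to cover the base case $\cO_X(r)$; since that proposition was itself proved by reduction to Serre vanishing for $\cO_X(u)$, $u\geq u_0$, one may, if needed, enlarge $r_0$ to $\max(r_0,u_0)$ without loss. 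I do not expect any real obstacle: the argument is a standard filtration-and-limit bootstrap, with the genuine content concentrated in the graded vanishing already established.
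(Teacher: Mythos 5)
Your argument is correct and is essentially the paper's own proof: twist the order-filtration exact sequences, use the graded vanishing of Proposition \ref{vanishing_coh_gr_Dnk} (together with the fact that cohomology commutes with direct sums on the noetherian space $X$) to run the induction on $d$, and pass to the inductive limit. Your concern about the base case is moot, since $\tcD^{(m)}_{X,k;0}=\cO_X$ is precisely the degree-zero summand of the graded sheaf, so its twisted vanishing for $r\ge r_0$ is already contained in that proposition and no enlargement of $r_0$ is needed.
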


\begin{proof} For $d\geq 0$ we let $\cF_d = \sD^{(k,m)}_{X,d}$. We consider the exact sequence

\begin{numequation}\label{ex_fil_seq_1}
0 \ra \cF_{d-1} \ra \cF_d  \ra {\rm gr}_d\left(\sD^{(k,m)}_{X}\right) \ra 0
\end{numequation}

\vskip8pt

(where $\cF_{-1}:=0$) from which we deduce the exact sequence

\begin{numequation}\label{ex_fil_seq_2}
0 \ra \cF_{d-1}(r) \ra \cF_d(r)  \ra {\rm gr}_d\left(\sD^{(k,m)}_{X}\right)(r) \ra 0
\end{numequation}

\vskip8pt

because tensoring with a line bundle is an exact functor. Since cohomology commutes with direct sums, we have for all $r\geq r_0$ and $i\geq 1$ that

$$H^{i}(X,{\rm gr}_d\left(\sD^{(k,m)}_{X}\right)(r))=0$$

\vskip8pt

according to the preceding proposition. Using the sequence \ref{ex_fil_seq_2} we can then deduce by induction on $d$ that for all $r\geq r_0$ and $i\geq 1$

$$H^i(X, \cF_d(r)) = 0 \;.$$

\vskip8pt

Because cohomology commutes with inductive limits on a noetherian scheme we obtain the asserted vanishing result. \end{proof}

\vskip8pt

\begin{prop}\label{surjection} Let $\cE$ be a coherent $\sD^{(k,m)}_{X}$-module.
		
\vskip8pt

(i) There is a number $r=r(\cE) \in \Z$ and $s \in \Z_{\ge 0}$ and an epimorphism of $\sD^{(k,m)}_{X}$-modules

$$\Big(\sD^{(k,m)}_{X}(-r)\Big)^{\oplus s} \twoheadrightarrow \cE \;.$$

\vskip8pt
				
(ii) There is $r_1(\cE) \in \Z$ such that for all $r \ge r_1(\cE)$ and all $i >0$

$$H^i\Big(X, \cE(r)\Big) = 0 \;.$$

\vskip8pt
\end{prop}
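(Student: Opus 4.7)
The plan is to deduce both parts from the very ampleness of $\cL_X$ (Lemma \ref{v_ample_sh_lemma}) combined with the cohomological vanishing of Corollary \ref{vanishing_coh_Dnk}, along the classical lines used e.g.\ in \cite[2.3.1]{NootHuyghe09}.

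For part (i), I would first produce a coherent $\cO_X$-submodule $\cE_0\subset\cE$ which generates $\cE$ as a $\tcD^{(m)}_{X,k}$-module. Since $\tcD^{(m)}_{X,k}$ is $\cO_X$-quasi-coherent (Proposition \ref{finite_tDm}), so is $\cE$, and on the noetherian scheme $X$ it is therefore the filtered union of its $\cO_X$-coherent submodules $\{\cE_\alpha\}_\alpha$. Cover $X$ by finitely many affine opens $U_i$ from the basis $\sB$ of Proposition \ref{finite_tDm}. Over each $U_i$ the ring $\tcD^{(m)}_{X,k}(U_i)$ is noetherian and $\cE(U_i)$ is finitely generated over it; hence finitely many sections over $U_i$ already generate $\cE(U_i)$, and by the filtered union property applied on each $U_i$ we obtain a single index $\alpha$ so that $\cE_0:=\cE_\alpha$ generates $\cE$ as a $\tcD^{(m)}_{X,k}$-module, i.e.\ the multiplication map $\tcD^{(m)}_{X,k}\otimes_{\cO_X}\cE_0\to\cE$ is surjective. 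Since $\cL_X$ is very ample over $\Spec(\fro)$, Serre's theorem yields $r\ge 0$ and a surjection $\cO_X^{\oplus s}\twoheadrightarrow\cE_0(r)$, equivalently $\cO_X(-r)^{\oplus s}\twoheadrightarrow\cE_0$. Composing with the multiplication map and identifying $\tcD^{(m)}_{X,k}\otimes_{\cO_X}\cO_X(-r)=\tcD^{(m)}_{X,k}(-r)$ via the right $\cO_X$-structure on $\tcD^{(m)}_{X,k}$ yields the desired epimorphism.

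For part (ii), I would run a descending induction on $i$, based on Lemma \ref{van_coh}. The base case $i>\dim X$ is immediate. For the inductive step, apply (i) to $\cE$ to obtain a short exact sequence of coherent $\tcD^{(m)}_{X,k}$-modules
\[ 0 \to \cK \to \tcD^{(m)}_{X,k}(-r)^{\oplus s} \to \cE \to 0, \]
where $\cK$ is coherent because $\tcD^{(m)}_{X,k}$ is a coherent sheaf of rings. Twisting by $\cL_X^{\otimes r'}$ (which is exact, as $\cL_X$ is a line bundle) and passing to the long exact sequence in cohomology gives a fragment
\[ H^i(X,\tcD^{(m)}_{X,k}(r'-r)^{\oplus s}) \to H^i(X,\cE(r')) \to H^{i+1}(X,\cK(r')). \]
Corollary \ref{vanishing_coh_Dnk} kills the left term once $r'-r\ge r_0$, and the induction hypothesis, applied to the coherent $\tcD^{(m)}_{X,k}$-module $\cK$ in degree $i+1$, kills the right term for $r'$ sufficiently large. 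Hence $H^i(X,\cE(r'))=0$ for all sufficiently large $r'$, completing the induction and producing the required bound $r_1(\cE)$.

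The only genuinely delicate point is the construction of the generating $\cO_X$-coherent submodule $\cE_0$ in part (i): it combines the local noetherianity from Proposition \ref{finite_tDm} with the filtered-union structure coming from $\cO_X$-quasi-coherence on the noetherian scheme $X$. Once this is in hand, the Serre twisting and long exact sequence arguments proceed routinely.
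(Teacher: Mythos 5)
Your proposal is correct and follows essentially the same route as the paper: part (i) via a generating $\cO_X$-coherent submodule plus Serre's theorem for the very ample $\cL_X$, and part (ii) by descending induction on $i$ starting from Grothendieck vanishing (Lemma \ref{van_coh}), using the surjection from (i), the coherence of its kernel, and Corollary \ref{vanishing_coh_Dnk}. The only difference is that you spell out the construction of the generating coherent submodule (finite affine cover from $\sB$, local noetherianity, filtered union), which the paper leaves implicit.
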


\begin{proof} (i) As $X$ is a noetherian scheme, $\cE$ is the inductive limit of its coherent subsheaves.  There is thus a coherent $\cO_{X}$-submodule $\cF \sub \cE$ which generates $\cE$ as a $\sD^{(k,m)}_{X}$-module, i.e., there is an epimorphism of sheaves

$$\sD^{(k,m)}_{X} \otimes_{\cO_{X}} \cF \stackrel{\alpha}{\lra} \cE \;,$$

\vskip8pt

where $\sD^{(k,m)}_{X}$ is considered with its right $\cO_{X}$-module structure. Next, there is $r>0$ such that the sheaf

$$\cF(r) = \cF \otimes_{\cO_{X}} \cL_X^{\otimes r}$$

\vskip8pt

is generated by its global sections. Hence there is $s > 0$ and an epimorphism $\cO_{X}^{\oplus s} \twoheadrightarrow \cF(r)$, and thus an epimorphism of $\cO_{X}$-modules

$$\left(\cO_{X}(-r)\right)^{\oplus s} \twoheadrightarrow \cF \;.$$

\vskip8pt

From this morphism we get an epimorphism of $\sD^{(k,m)}_{X}$-modules

$$\left(\sD^{(k,m)}_{X}(-r)\right)^{\oplus s}  = \sD^{(k,m)}_{X} \otimes_{\cO_{X_n}} \left(\cO_{X}(-r)\right)^{\oplus s} \twoheadrightarrow \sD^{(k,m)}_{X} \otimes_{\cO_{X}} \cF  \stackrel{\alpha}{\lra} \cE \;.$$

\vskip8pt

(ii) Consider for $i\geq 1$ the following assertion $(a_i)$: for any coherent $\sD^{(k,m)}_{X}$-module $\cE$, there is a number $r_i(\cE)$ such that for all $r\geq r_i(\cE)$ and all $i\leq j$ one has $H^{j}(X,\cE(r))=0$. For $i>\dim X$ the assertion holds, cf. \ref{van_coh}. Suppose the statement $(a_{i+1})$ holds.
Using (i) we find an epimorphism of $\sD^{(k,m)}_{X}$-modules

$$\beta: \cC_0:=\Big(\sD^{(k,m)}_{X}(s_0)\Big)^{\oplus s} \twoheadrightarrow \cE$$

\vskip8pt

for numbers $s_0\in\bbZ$ and $s\geq 0$. By \ref{finite_tDm}, the kernel $\cR = \ker(\beta)$ is a coherent $\sD^{(k,m)}_{X}$-module.
Recall the number $r_0$ of the preceding corollary. For any $r\geq \max (r_0-s_0, r_{i+1}(\cR))$ we have the exact sequence

$$0 = H^{i}(X,\cC_0(r)) \lra H^{i}(X,\cE(r)) \lra H^{i+1}(X, \cR(r)) = 0$$

\vskip8pt

which shows $H^{i}(X,\cE(r))=0$ for these $r$. So we may take as $r_i(\cE)$ any of these $r$ which is larger than $r_{i+1}(\cE)$ and obtain the statement $(a_i)$. In particular, $(a_1)$ holds which proves (ii). \end{proof}

\vskip8pt

\begin{prop}\label{finite_power}

\vskip8pt

(i) Fix $r \in \Z$. There is $c_1 = c_1(r) \in \Z_{\ge 0}$ such that for all $i>0$ the cohomology group $H^i(X,\sD^{(k,m)}_{X}(r))$ is annihilated by $p^{c_1}$.

\vskip8pt

(ii) Let $\cE$ be a coherent $\sD^{(k,m)}_{X}$-module. There is $c_2 = c_2(\cE) \in \Z_{\ge 0}$ such that for all $i>0$ the cohomology group $H^i(X,\cE)$ is annihilated by $p^{c_2}$.
\end{prop}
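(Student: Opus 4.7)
Part (ii) reduces to (i) by descending induction on $i$: for $i > \dim X$ the cohomology vanishes by Lemma~\ref{van_coh}, and for the inductive step Proposition~\ref{surjection}(i) furnishes an exact sequence
\[ 0 \lra \cK \lra \left(\tcD^{(m)}_{X,k}(-r)\right)^{\oplus s} \lra \cE \lra 0, \]
in which $\cK$ is coherent (since $\tcD^{(m)}_{X,k}$ has noetherian sections over affines). The associated long exact sequence presents $H^i(X,\cE)$ as an extension of a subgroup of $H^{i+1}(X,\cK)$ (annihilated by a power of $p$ by the inductive hypothesis) by a quotient of $H^i(X,\tcD^{(m)}_{X,k}(-r))^{\oplus s}$ (annihilated by a power of $p$ by part (i)), hence $H^i(X,\cE)$ is likewise annihilated by a power of $p$.

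The heart of the argument is (i). Since $\cL_X$ is locally trivial, the sheaf $\tcD^{(m)}_{X,k}(r)$ is locally isomorphic to $\tcD^{(m)}_{X,k}$ as a left module, so it is a coherent $\cA^{(m)}_{X,k}$-module via the surjection $\cA^{(m)}_{X,k}\twoheadrightarrow\tcD^{(m)}_{X,k}$ of Proposition~\ref{prop-auxiliaryI}. Proposition~\ref{prop-auxiliaryII}(iii) then shows that $M_i:=H^i(X,\tcD^{(m)}_{X,k}(r))$ is a finitely generated module over the noetherian algebra $D^{(m)}(\bbG(k))$.

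The key step will be to prove $M_i\otimes_\fro L=0$ for $i>0$. Because the blow-up ideal $\cI$ contains a power of $\varpi$, the morphism ${\rm pr}$ restricts to an isomorphism $X_L\simeq X_{0,L}$, where $X_{0,L}$ is the flag variety $\bbG_L/\bbB_L$ over $L$. On the generic fibre the congruence factors $\varpi^{k|\unu|}$ become units and $\cO_{X/X_0}(1)$ coincides with $\cO_X$ by \eqref{invert_p_2}; hence $\tcD^{(m)}_{X,k}(r)\otimes_\fro L$ identifies with $\cD_{X_{0,L}}\otimes_{\cO_{X_{0,L}}}\cO_{X_{0,L}}(a_0 r)$, which is locally free of rank one, hence coherent, as a left $\cD_{X_{0,L}}$-module. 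Since the flag variety $\bbG_L/\bbB_L$ is $\cD$-affine by Beilinson--Bernstein, this coherent $\cD$-module has vanishing higher cohomology; passing to the colimit over the coherent pieces $\tcD^{(m)}_{X,k;d}(r)$, to which flat base change for the proper morphism $X\to\Spec\fro$ applies, yields $M_i\otimes_\fro L=0$.

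Finally, since $\varpi\in\fro\sub D^{(m)}(\bbG(k))$ is central, a finitely generated $D^{(m)}(\bbG(k))$-module which is $\varpi$-power torsion is annihilated by a single $\varpi^c$: one picks finitely many generators, each killed by some $\varpi^{n_j}$, and takes $c=\max_j n_j$. Taking the maximum of the resulting exponents over the finitely many degrees $1\leq i\leq\dim X$ produces the desired $c_2=c_2(r)$. The main obstacle is the identification of the generic fibre of $\tcD^{(m)}_{X,k}(r)$ with classical twisted differential operators on the flag variety and the appeal to Beilinson--Bernstein $\cD$-affinity; once this input is granted the rest is a formal combination of flat base change, finite generation, and the descending induction above.
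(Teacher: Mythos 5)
Your proposal is correct and follows essentially the same route as the paper: part (i) combines the generic-fibre identification with (twisted) differential operators on the flag variety over $L$ and Beilinson--Bernstein $\cD$-affinity to get $p$-power torsion, then uses the coherence over $\cA^{(m)}_{X,k}$ from Proposition~\ref{prop-auxiliaryI} and finite generation over the noetherian algebra $D^{(m)}(\bbG(k))$ from Proposition~\ref{prop-auxiliaryII} to bound the torsion; part (ii) is the same descending induction via the surjections of Proposition~\ref{surjection}. The extra details you supply (flat base change on the filtration pieces, the single-power-of-$\varpi$ argument from finitely many generators) are exactly the steps the paper leaves implicit.
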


\begin{proof} (i) Since the blow-up morphism $\pr: X \ra X_0$ becomes an isomorphism over $X_0\times_{\fro} L$ any coherent module over $\sD^{(k,m)}_{X}\otimes \Q$ induces a coherent module over the sheaf of usual differential operators on $X_0\times_\fro L$. By \cite{BB81} we conclude that the global section functor on $X$ is exact for coherent $\sD^{(k,m)}_{X} \otimes_\Z \Q$-modules. In particular, the cohomology group $H^i(X,\sD^{(k,m)}_{X}(r))$ is $p$-torsion. To see that the torsion is bounded, we deduce from \ref{prop-auxiliaryI} that $\sD^{(k,m)}_{X}(r)$ is a coherent module over $\cA^{(k,m)}_X$. According to \ref{prop-auxiliaryII}, $H^i(X,\sD^{(k,m)}_{X}(r))$ is therefore finitely generated over $D^{(m)}(\bbG(k))$. Now consider a finite set of generators of $H^i(X,\sD^{(k,m)}_{X}(r))$ as $D^{(m)}(\bbG(k))$-module. These are annihilated by a finite power $p^{c_{1,i}}$ of $p$, and since there are only finitely many integers $i>0$ with non-zero $H^i(X,\sD^{(k,m)}_{X}(r))$, cf. \ref{van_coh}, we can take $c_2 := \max
\{c_{2,i} \midc i \ge 0\}$.

\vskip8pt

(ii) We consider for any $i\geq 1$ the following assertion $(a_i)$: for any coherent $\sD^{(k,m)}_{X}$-module $\cE$, there is a number $r_i(\cE)$ such that the groups $H^{j}(X,\cE), i\leq j$ are all annihilated by $p^{r_i(\cE)}$. For $i>\dim X$ the assertion is true, cf. \ref{van_coh}. Let us assume that $(a_{i+1})$ holds and consider an arbitrary coherent $\sD^{(k,m)}_{X}$-module $\cE$. Acccording to \ref{surjection} we have a $\sD^{(k,m)}_{X}$-linear surjection

$$\cE_0:=\sD^{(k,m)}_{X}(r)^{\oplus s} \lra \cE$$

\vskip8pt

for numbers $r\in\bbZ$ and $s\geq 0$. Let $\cE'$ be the kernel. We have an exact sequence

$$ H^{i}(X,\cE_0)\stackrel{\iota}{\ra}H^{i}(X,\cE)\stackrel{\delta}{\ra}H^{i+1}(X,\cE') \;.$$

\vskip8pt

Then $p^{c_1(r)}$ annihilates the image of $\iota$ according to (i) and $p^{r_{i+1}(\cE')}$ annihilates the image of $\delta$ according to $(a_{i+1})$. So we may take as $r_i(\cE)$ any number greater than the maximum of $r_{i+1}(\cE)$ and $c_1(r)+r_{i+1}(\cE')$ and obtain the statement $(a_i)$. In particular, $(a_1)$ holds which proves (ii). \end{proof}

\subsection{Cohomology of coherent \texorpdfstring{$\hsD^{(k,m)}_{\frX,\Q}$}{}-modules}\label{coh_coh_tsD_mod}

We denote by $X_{j}$ the reduction of $X$ modulo $p^{j+1}$.

\vskip8pt

\begin{prop}\label{completion} Let $\cE$ be a coherent $\sD^{(k,m)}_{X}$-module on $X$ and $\widehat{\cE} = \varprojlim_j \cE/p^{j+1}\cE$ its $p$-adic completion, which we consider as a sheaf on $\frX$.
		
\vskip8pt

(i) For all $i \ge 0$ one has $H^i(\frX,\widehat{\cE}) = \varprojlim_j H^i\left(X_{j},\cE/p^{j+1}\cE \right)$.

\vskip8pt

(ii) For all $i>0$ one has $H^i(\frX,\widehat{\cE}) = H^i(X,\cE)$.

\vskip8pt

(iii) $H^0(\frX,\widehat{\cE}) = \varprojlim_j H^0(X,\cE)/p^{j+1}H^0(X,\cE)$.
\end{prop}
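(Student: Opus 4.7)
The plan is to establish (i) first, via the Milnor exact sequence for inverse limits of sheaves, and then to deduce (ii) and (iii) by using the bounded $p$-torsion of $H^{\ast}(X,\cE)$ supplied by Proposition~\ref{finite_power}(ii) to pin down the inverse limits that appear.

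For (i), because the transition maps $\cE/p^{j+2}\cE\to\cE/p^{j+1}\cE$ are surjective, the tautological ``telescope'' sequence
$$0\longrightarrow \widehat{\cE}\longrightarrow \prod_j \cE/p^{j+1}\cE \longrightarrow \prod_j \cE/p^{j+1}\cE\longrightarrow 0$$
(with second map $(x_j)\mapsto (x_j-\phi_j(x_{j+1}))$) is exact as sheaves on the common underlying topological space of $\frX$, $X_j$, and $X_0$. Since sheaf cohomology commutes with arbitrary products (take a product of injective resolutions), the associated long exact sequence collapses to the Milnor short exact sequence
$$0\longrightarrow {\varprojlim}^{1}\,H^{i-1}(X_j,\cE/p^{j+1}\cE)\longrightarrow H^i(\frX,\widehat{\cE})\longrightarrow \varprojlim_j H^i(X_j,\cE/p^{j+1}\cE)\longrightarrow 0,$$
so (i) will follow once each system $\{H^{i}(X_j,\cE/p^{j+1}\cE)\}_{j}$ is shown to satisfy Mittag--Leffler.

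By Proposition~\ref{finite_power}(ii) we pick $c\geq 0$ with $p^c\cdot H^i(X,\cE)=0$ for all $i\geq 1$. The long exact sequence attached to $0\to \cE\xrightarrow{p^{j+1}}\cE\to \cE/p^{j+1}\cE\to 0$ then breaks up, for $j+1\geq c$, into short exact sequences
$$0\longrightarrow H^i(X,\cE)\longrightarrow H^i(X_j,\cE/p^{j+1}\cE)\longrightarrow H^{i+1}(X,\cE)\longrightarrow 0\quad (i\geq 1),$$
and an analogous one at $i=0$ with $H^0(X,\cE)/p^{j+1}H^0(X,\cE)$ in place of $H^0(X,\cE)$. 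The transition map from index $j'\geq j$ down to $j$ is the identity on the first summand and multiplication by $p^{j'-j}$ on the third, which vanishes once $j'-j\geq c$. Hence each system is Mittag--Leffler (proving (i)), and its inverse limit coincides with the first factor. For $i\geq 1$ this is precisely (ii). For $i=0$, the inverse limit of the third term vanishes and the projective system of quotients $H^0(X,\cE)/p^{j+1}H^0(X,\cE)$ has surjective transition maps (so zero $\varprojlim^{1}$); combined with (i) this yields (iii).

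The only real obstacle is the Mittag--Leffler verification for the cohomology inverse systems; once the bounded $p$-torsion estimate of Proposition~\ref{finite_power}(ii) is in hand, this reduces to the elementary diagram-chase sketched above.
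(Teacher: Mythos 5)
Your overall architecture --- the telescope/Milnor sequence for (i), followed by the bounded $p$-torsion of cohomology from Proposition \ref{finite_power} to verify Mittag--Leffler and identify the limits in (ii) and (iii) --- is the same as the paper's (which reduces to the corresponding statement in \cite{PSS4}). But there is a genuine gap: the sequence $0\to\cE\xrightarrow{p^{j+1}}\cE\to\cE/p^{j+1}\cE\to 0$, on which your entire Mittag--Leffler verification rests, is exact only when $\cE$ has no $p$-torsion, and a general coherent $\tcD^{(m)}_{X,k}$-module does have $p$-torsion (e.g.\ $\tcD^{(m)}_{X,k}/p$). The correct sequence is the four-term one $0\to\cE[p^{j+1}]\to\cE\xrightarrow{p^{j+1}}\cE\to\cE/p^{j+1}\cE\to 0$, and without controlling the kernel your short exact sequences of cohomology groups, and hence the ML property of the systems $H^i(X_j,\cE/p^{j+1}\cE)$, are unproven; note that part (i) itself already needs this, since it implicitly asserts the vanishing of the $\varprojlim^1$ term. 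The paper's proof addresses exactly this point first: the torsion subsheaf $\cE_t$ is coherent over $\tcD^{(m)}_{X,k}$ (because that sheaf has noetherian sections over affines), hence is killed by a single power $p^c$; one then works with the torsion-free quotient $\cG=\cE/\cE_t$, for which your multiplication-by-$p^{j+1}$ sequence is exact, and feeds the bounds $p^c\cE_t=0$, $p^cH^i(X,\cE)=0$ and $p^cH^i(X,\cG)=0$ into the diagram chase. You need to add this reduction.

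A smaller point: the claim that sheaf cohomology commutes with arbitrary products ``by taking a product of injective resolutions'' is not correct as stated. Infinite products are not exact in categories of abelian sheaves, so a product of injective resolutions need not be a resolution of the product sheaf. What legitimizes the Milnor sequence here is that the sheaves $\cE/p^{j+1}\cE$ are quasi-coherent with surjective transition maps, hence acyclic and with Mittag--Leffler sections on a basis of affine opens; this is precisely the hypothesis of EGA $0_{\rm III}$\,13.3.1, which is the form of the statement you should invoke for (i).
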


\begin{proof} Put $\cE_j = \cE/p^{j+1}\cE$. Let $\cE_t$ be the subsheaf defined by

$$\cE_t(U) = \cE(U)_{\rm tor} \;,$$

\vskip8pt

where the right hand side denotes the group of torsion elements in $\cE(U)$. This is indeed a sheaf (and not only a presheaf) because $X$ is a noetherian space. Furthermore, $\cE_t$ is a $\sD^{(k,m)}_{X}$-submodule of $\cE$. Because the sheaf $\sD^{(k,m)}_{X}$ has noetherian rings of sections over open affine subsets of $X$, cf. \ref{prop-exactdirectimage}, the submodule $\cE_t$ is a coherent $\sD^{(k,m)}_{X}$-module. $\cE_t$ is thus generated by a coherent $\cO_{X}$-submodule $\cF$ of $\cE_t$. The submodule $\cF$ is annihilated by a fixed power $p^c$ of $p$, and so is $\cE_t$. Put $\cG = \cE/\cE_t$, which is again a coherent $\sD^{(k,m)}_{X}$-module. Using \ref{finite_power}, we can then assume, after possibly replacing $c$ by a larger number, that

\vskip8pt

$$\begin{array}{cl}
(a) & p^c\cE_t = 0 \;,\\
(b) & \mbox{for all } i>0: p^cH^i(X,\cE) = 0 \;,\\
(c) & \mbox{for all } i>0: p^cH^i(X,\cG) = 0 \;.\\
\end{array}$$

\vskip8pt

From here on the proof of the proposition is exactly as in \cite[4.2.1]{PSS4}. \end{proof}

\vskip8pt

\begin{prop}\label{surj_compl} Let $\sE$ be a coherent $\hsD^{(k,m)}_{\frX}$-module.

\vskip8pt

(i) There is $r_1(\sE) \in \Z$ such that for all $r \ge r_1(\sE)$ there is $s \in \Z_{\ge 0}$ and an epimorphism of $\hsD^{(k,m)}_{\frX}$-modules

$$\Big(\hsD^{(k,m)}_{\frX}(-r)\Big)^{\oplus s} \twoheadrightarrow \sE \;.$$

\vskip8pt

(ii) There is $r_2(\sE) \in \Z$ such that for all $r \ge r_2(\sE)$ and all $i >0$
				
$$H^i\Big(\frX, \sE(r)\Big) = 0 \;.$$

\vskip8pt
\end{prop}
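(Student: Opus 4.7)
The plan is to mirror the proof of Proposition~\ref{surjection} on the formal scheme $\frX$, using Proposition~\ref{completion} to transfer cohomology-vanishing results from $X$ to $\frX$.

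For part (i), the first step is to produce a coherent $\cO_\frX$-submodule $\sF \sub \sE$ that generates $\sE$ as a $\tsD^{(m)}_{\frX,k}$-module. This relies on the fact that $\tsD^{(m)}_{\frX,k}$ has noetherian rings of sections on a basis of open affine formal subschemes (\ref{global_sec_tsD}(i)) together with the quasi-compactness of $\frX$: on each member of a finite affine cover one finds finitely many local generators, and the coherent $\cO_\frX$-submodules they span glue to a global coherent subsheaf. Next, since $\cL_X$ is very ample on $X$ over $\Spf(\fro)$ by Lemma~\ref{v_ample_sh_lemma}, its formal completion $\cL_\frX$ is ample on the proper formal scheme $\frX$, so Serre's theorem in the formal setting yields an $r_1(\sE)$ such that for $r \ge r_1(\sE)$, $\sF(r)$ is generated by finitely many global sections. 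This produces an epimorphism $\cO_\frX(-r)^{\oplus s} \twoheadrightarrow \sF$, and tensoring with $\tsD^{(m)}_{\frX,k}$ (as a right $\cO_\frX$-module) and composing with the canonical map $\tsD^{(m)}_{\frX,k} \otimes_{\cO_\frX} \sF \twoheadrightarrow \sE$ gives the desired epimorphism.

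For part (ii), I proceed by descending induction on $i$, exactly as in \ref{surjection}(ii). For $i > \dim \frX$ the cohomology vanishes by Grothendieck's theorem (\ref{van_coh} applies to the underlying noetherian space). For the inductive step, the surjection from (i) fits into a short exact sequence
$$0 \lra \sR \lra \bigl(\tsD^{(m)}_{\frX,k}(-r_1)\bigr)^{\oplus s} \lra \sE \lra 0,$$
with $\sR$ coherent by the coherence of $\tsD^{(m)}_{\frX,k}$. Twisting by $\cL_\frX^{\otimes r}$ and applying the long exact cohomology sequence, the required vanishing reduces to $H^i(\frX, \tsD^{(m)}_{\frX,k}(r-r_1)) = 0$ for $i > 0$ and $r$ large. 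Because twisting by the invertible sheaf $\cL_\frX^{\otimes r}$ commutes with $p$-adic completion, $\tsD^{(m)}_{\frX,k}(r-r_1)$ is the $p$-adic completion of the coherent $\tcD^{(m)}_{X,k}$-module $\tcD^{(m)}_{X,k}(r-r_1)$; Proposition~\ref{completion}(ii) then identifies its cohomology on $\frX$ with the cohomology of $\tcD^{(m)}_{X,k}(r-r_1)$ on $X$, which vanishes once $r - r_1 \ge r_0$ by Corollary~\ref{vanishing_coh_Dnk}. Combining this with the inductive hypothesis applied to $\sR$ finishes the induction.

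The main technical point is producing the coherent $\cO_\frX$-submodule of $\sE$ in part (i): the noetherianity of local sections makes local generation immediate, but promoting the local choices to a coherent subsheaf of $\sE$ requires some care, either by gluing via a finite affine cover or by first lifting $\sE$ to a coherent $\tcD^{(m)}_{X,k}$-module on $X$ through the equivalence induced by $p$-adic completion (as in \cite[3.3]{BerthelotDI}). A secondary ingredient is the formal version of Serre's theorem for the ample bundle $\cL_\frX$, which may be obtained by reducing modulo powers of $p$, applying Serre's theorem on each reduction $X_j$, and passing to the inverse limit.
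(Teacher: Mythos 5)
Your part (ii) is essentially the paper's argument (identify $H^i(\frX,\tsD^{(m)}_{\frX,k}(r))$ with $H^i(X,\tcD^{(m)}_{X,k}(r))$ via \ref{completion}, quote \ref{vanishing_coh_Dnk}, and run the descending induction of \ref{surjection}(ii)), and that part is fine. The problem is part (i). Your strategy hinges on producing a coherent $\cO_{\frX}$-submodule $\sF\subseteq\sE$ that generates $\sE$ over $\tsD^{(m)}_{\frX,k}$, and then applying a formal Serre theorem to $\sF$. On the scheme $X$ this works because a quasi-coherent $\cO_X$-module on a noetherian scheme is the filtered union of its coherent submodules; but a coherent $\tsD^{(m)}_{\frX,k}$-module on the formal scheme is \emph{not} quasi-coherent over $\cO_{\frX}$ in any sense that gives you this, and neither of your two remedies closes the gap. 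Gluing local finite-type $\cO$-spans of generators does not obviously yield a coherent subsheaf (the image of $\cO_{\frU}^{\oplus n}\to\sE|_{\frU}$ need not have finitely generated kernel, since $\sE$ is not $\cO$-coherent, and extending such subsheaves across a formal scheme is not automatic). And there is no equivalence between coherent $\tcD^{(m)}_{X,k}$-modules on $X$ and coherent $\tsD^{(m)}_{\frX,k}$-modules on $\frX$ given by $p$-adic completion: Berthelot's results in this direction are th\'eor\`emes A and B over \emph{affine} opens, plus the equivalence up to isogeny after inverting $p$ recorded in \ref{integral_models}(i); they do not let you lift an arbitrary coherent module from $\frX$ to $X$.

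The paper's proof of (i) is built precisely to avoid this issue. One first isolates the torsion $\sE_t$ (coherent, killed by $p^c$), sets $\sG=\sE/\sE_t$, $\sG_0=\sG/p\sG$, and uses the exact sequences $0\to\sG_0\to\sE_{j+1}\to\sE_j\to 0$ for $j\ge c$. The reductions $\sE_c$ and $\sG_0$ are coherent over $\tcD^{(m)}_{X,k}$, so the scheme-level Proposition \ref{surjection} applies to them: it gives a surjection $(\tcD^{(m)}_{X,k}/p^c)^{\oplus s}\twoheadrightarrow\sE_c(r)$ \emph{and} the vanishing $H^1(X,\sG_0(r))=0$ for $r$ large. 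The latter makes the maps $H^0(\frX,\sE_{j+1}(r))\to H^0(\frX,\sE_j(r))$ surjective, so the generators of $\sE_c(r)$ lift to $\varprojlim_j H^0(\frX,\sE_j(r))\simeq H^0(\frX,\sE(r))$, and surjectivity of the resulting map $(\tsD^{(m)}_{\frX,k})^{\oplus s}\to\sE(r)$ follows because it is surjective mod $p^c$. Note that the twist by $r$ is doing double duty here — global generation of the reduction \emph{and} killing the $H^1$ obstruction to lifting sections — whereas your argument uses it only for the first purpose and supplies no lifting mechanism. You should replace your part (i) with this reduction-and-lifting argument.
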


\begin{proof} (i) Because $\sE$ is a coherent $\hsD^{(k,m)}_{\frX}$-module, and because $H^0(U,\hsD^{(k,m)}_{\frX})$ is a noetherian ring for all open affine subsets $U \sub \frX$, cf. \ref{prop-exactdirectimage}, the torsion submodule $\sE_t \sub \sE$ is again a coherent $\hsD^{(k,m)}_{\frX}$-module.
As $\frX$ is quasi-compact, there is $c \in \Z_{\ge 0}$ such that $p^c \sE_t = 0$.
Put $\sG = \sE/\sE_t$ and $\sG_0 = \sG/p\sG$. For $j \ge c$ one has an exact sequence

$$0 \ra \sG_0 \stackrel{p^{j+1}}{\lra} \sE_{j+1} \ra \sE_j \ra 0 \;.$$

\vskip8pt

We note that the sheaf $\sG_0$ is a coherent module over
$\hsD^{(k,m)}_{\frX}/p\hsD^{(k,m)}_{\frX}$. We view $\frX$ as a closed subset of $X$ and denote the closed embedding temporarily by $i$. Because the canonical map of sheaves of rings

\begin{numequation}\label{iso_uncompl_compl}
\sD^{(k,m)}_{X}/p\sD^{(k,m)}_{X} \car i_*\Big(\hsD^{(k,m)}_{\frX}/p\hsD^{(k,m)}_{\frX}\Big)
\end{numequation}

is an isomorphism, $i_*\sG_0$ can be considered a coherent $\sD^{(k,m)}_{X}$-module via this isomorphism. Hence we can apply \ref{surjection} to $i_*\sG_0$ and deduce that there is $r_2(\sG_0)$ such that for all $r \ge r_2(\sG_0)$ one has

$$H^1(\frX,\sG_0(r))= H^1(X,i_*\sG_0(r)) = 0 \;.$$

\vskip8pt

The canonical maps

\begin{numequation}\label{surj_H0}
H^0(\frX,\sE_{j+1}(r)) \lra H^0(\frX,\sE_j(r))
\end{numequation}

are thus surjective for $r \ge r_2(\sG_0)$ and $j \ge c$. Similarly, $\sE_c$ is a coherent module over $\sD^{(k,m)}_{X}/p^c\sD^{(k,m)}_{X}$-module, in particular a coherent $\sD^{(k,m)}_{X}$-module. By \ref{surjection} there is $r_1(\sE_c)$ such that for every $r \ge r_1(\sE_c)$ there is $s \in \Z_{\ge 0}$ and a surjection

$$\lambda: \Big(\sD^{(k,m)}_{X}/p^c\sD^{(k,m)}_{X}\Big)^{\oplus s} \twoheadrightarrow \sE_c(r) \;.$$

\vskip8pt

Let $r_1(\sE) = \max\{r_2(\sG_0),r_1(\sE_c)\}$, and assume from now on that $r \ge r_1(\sE)$. Let $e_1, \ldots, e_s$ be the standard basis of the domain of $\lambda$, and use \ref{surj_H0} to lift each $\lambda(e_t)$, $1 \le t \le s$, to an element of

$$\varprojlim_j H^0(\frX,\sE_j(r)) \simeq H^0(\frX,\widehat{\sE(r)}) \;,$$

\vskip8pt

by \ref{completion} (i). But $\widehat{\sE(r)} = \widehat{\sE}(r)$, and $\widehat{\sE} = \sE$, as follows from \cite[3.2.3 (v)]{BerthelotDI}. This defines a morphism

$$\Big(\hsD^{(k,m)}_{\frX}\Big)^{\oplus s} \lra \sE(r)$$

\vskip8pt

which is surjective because, modulo $p^c$, it is a surjective morphism of sheaves coming from coherent $\hsD^{(k,m)}_{\frX}$-modules by reduction modulo $p^c$, cf. \cite[3.2.2 (ii)]{BerthelotDI}.

\vskip8pt

(ii) We deduce from \ref{vanishing_coh_Dnk} and \ref{completion} that for all $i>0$

$$H^i\Big(\frX,\hsD^{(k,m)}_{\frX}(r)\Big) = 0 \;,$$

\vskip8pt

whenever $r \ge r_0$, where $r_0$ is as in \ref{v_ample_sh_lemma}. Since the sheaf $\hsD^{(k,m)}_{\frX}$ is coherent, cf. \ref{global_sec_tsD}, and $\frX$ is a noetherian space of finite dimension, the statement in (ii) can now be deduced by descending induction on $i$
exactly as in the proof of part (ii) of \ref{surjection}. \end{proof}

\vskip8pt

\begin{prop}\label{compl_finite_power_torsion} Let $\sE$ be a coherent $\hsD^{(k,m)}_{\frX}$-module.

\vskip8pt

(i) There is $c=c(\sE) \in \Z_{\ge 0}$ such that for all $i>0$
the cohomology group $H^i(\frX,\sE)$ is annihilated by $p^c$.

\vskip8pt

(ii) $H^0(\frX,\sE) = \varprojlim_j H^0(\frX,\sE)/p^jH^0(\frX,\sE)$.
\end{prop}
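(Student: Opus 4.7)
For part (i), the plan is to prove by descending induction on $i$ the statement $(a_i)$: for every coherent $\tsD^{(m)}_{\frX,k}$-module $\sE$, there exists $c=c_i(\sE)\in\bbZ_{\geq 0}$ such that $p^c$ annihilates $H^j(\frX,\sE)$ for all $j\geq i$. The base case $i>\dim\frX$ is immediate from Grothendieck's vanishing theorem \ref{van_coh}, since $\frX$ and $X$ have the same underlying topological space and $\dim X < \infty$. For the inductive step, assume $(a_{i+1})$ holds and let $\sE$ be coherent. Invoking \ref{surj_compl}(i), choose an epimorphism
$$\sC_0 := \bigl(\tsD^{(m)}_{\frX,k}(-r)\bigr)^{\oplus s} \twoheadrightarrow \sE$$
and let $\sR$ be its kernel, which is coherent because $\tsD^{(m)}_{\frX,k}$ has noetherian sections over a basis of affine opens by \ref{global_sec_tsD}(i). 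The long exact sequence in cohomology furnishes
$$H^i(\frX,\sC_0) \longrightarrow H^i(\frX,\sE) \longrightarrow H^{i+1}(\frX,\sR).$$
By \ref{completion}(ii) applied to the coherent $\tcD^{(m)}_{X,k}$-module $\tcD^{(m)}_{X,k}(-r)$, we have $H^i(\frX,\tsD^{(m)}_{\frX,k}(-r)) = H^i(X,\tcD^{(m)}_{X,k}(-r))$, and by \ref{finite_power}(i) this group is killed by a fixed power $p^{c_2(-r)}$ of $p$. Hence $H^i(\frX,\sC_0)$ is killed by $p^{c_2(-r)}$, while $H^{i+1}(\frX,\sR)$ is killed by $p^{c_{i+1}(\sR)}$ by the inductive hypothesis. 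Any exponent greater than $c_2(-r)+c_{i+1}(\sR)$ works for $\sE$, completing the induction. Taking $i=1$ yields (i).

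For part (ii), the plan is to reduce to the completed uncompleted case by separating torsion. Since $\tsD^{(m)}_{\frX,k}$ has noetherian sections by \ref{global_sec_tsD}(i) and $\frX$ is quasi-compact, the $p$-torsion subsheaf $\sE_t\subseteq\sE$ is a coherent $\tsD^{(m)}_{\frX,k}$-submodule annihilated by some power $p^a$. Put $\sG:=\sE/\sE_t$, which is $p$-torsion-free. The exact sequence
$$0 \longrightarrow \sG \stackrel{p^{j+1}}{\longrightarrow} \sG \longrightarrow \sG/p^{j+1}\sG \longrightarrow 0$$
is exact for every $j\geq 0$, and (i) applied to $\sG$ furnishes a uniform constant $c'$ such that $p^{c'}$ annihilates $H^1(\frX,\sG)$. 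A standard Mittag-Leffler argument (as in \cite[4.2.1]{PSS4} or the proof of \ref{completion}) then shows that the natural map
$$H^0(\frX,\sG) \longrightarrow \varprojlim_j H^0(\frX,\sG)/p^{j+1}H^0(\frX,\sG)$$
is an isomorphism. Combining this with the short exact sequence $0\to\sE_t\to\sE\to\sG\to 0$, whose associated long exact sequence has its first two and its fourth term annihilated by $p^a$, one deduces that the analogous map for $\sE$ is also an isomorphism, hence $H^0(\frX,\sE)$ is $p$-adically complete.

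The main obstacle is in (ii): one must show that the bounded $p$-torsion of $H^0(\frX,\sE_t)$ and $H^1(\frX,\sE_t)$ does not destroy completeness of $H^0(\frX,\sE)$ when passing from $\sG$ to $\sE$. The clean way to see this is to observe that an extension of a $p$-adically complete and separated $\fro$-module by a $p$-power-torsion $\fro$-module of bounded exponent is again $p$-adically complete and separated; this is purely algebraic and straightforward once (i) is in hand. Part (i) itself is essentially formal, with the only real content being the bound on the $p$-torsion of $H^i(\frX,\tsD^{(m)}_{\frX,k}(-r))$, which was already settled in \ref{finite_power}(i) on the algebraic side and transported to $\frX$ via \ref{completion}(ii).
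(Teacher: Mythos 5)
Your proposal is correct and follows essentially the same route as the paper: part (i) reduces to the bounded torsion of $H^i(X,\tcD^{(m)}_{X,k}(-r))$ established in \ref{finite_power}, transported to $\frX$ via \ref{completion}, and then runs the same descending induction using the surjections of \ref{surj_compl}; part (ii) is the same torsion-splitting plus Mittag-Leffler argument that the paper carries out by invoking the reasoning of \ref{completion}(iii). No gaps.
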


\begin{proof} (i) Let $r\in\bbZ$. By \ref{completion} we have for $i>0$ that

$$H^{i}(\frX,\hsD^{(k,m)}_{\frX}(-r)) =  H^{i}(X,\sD^{(k,m)}_{X}(-r)) \;,$$

\vskip8pt

and this is annihilated by a finite power of $p$, by \ref{finite_power}. The proof now proceeds by descending induction exactly as in the proof of part (ii) of \ref{finite_power}.

\vskip8pt

(ii) Let $\sE_t \sub \sE$ be the subsheaf of torsion elements and $\sG = \sE/\sE_t$. Then the discussion in the beginning of the proof of \ref{completion} shows that there is $c \in \Z_{\ge 0}$ such that $p^c\sE_t = 0$. Part (i) gives that $p^cH^1(\frX,\sE) = p^cH^1(\frX,\sG) = 0$, after possibly increasing $c$. Now we can apply the same reasoning as in the proof of \ref{completion} (iii) to conclude that assertion (ii) is true. \end{proof}

\vskip8pt

\begin{para} Let $\Coh(\hsD^{(k,m)}_{\frX})$ (resp.  $\Coh(\hsD^{(k,m)}_{\frX,\Q})$) be the category of coherent $\hsD^{(k,m)}_{\frX}$-modules (resp. $\hsD^{(k,m)}_{\frX,\Q}$-modules). Let $\Coh(\hsD^{(k,m)}_{\frX})_\Q$ be the category of coherent $\hsD^{(k,m)}_{\frX}$-modules up to isogeny. We recall that this means that $\Coh(\hsD^{(k,m)}_{\frX})_\Q$ has the same class of objects as $\Coh(\hsD^{(k,m)}_{\frX})$, and for any two objects $\cM$ and $\cN$ one has

$$\Hom_{\Coh(\hsD^{(k,m)}_{\frX})_\Q}(\cM,\cN) = \Hom_{\Coh(\hsD^{(k,m)}_{\frX})}(\cM,\cN) \otimes_\Z \Q \;.$$

\vskip8pt

\end{para}

\vskip8pt

\begin{prop}\label{integral_models} (i) The functor $\cM \rightsquigarrow \cM_\Q = \cM \otimes_\Z \Q$ induces an equivalence between $\Coh(\hsD^{(k,m)}_{\frX})_\Q$ and $\Coh(\hsD^{(k,m)}_{\frX,\Q})$.

\vskip8pt

(ii) For every coherent $\sD^\dagger_{\frX,k}$-module $\sM$ there is $m \ge 0$ and a coherent $\hsD^{(k,m)}_{\frX,\Q}$-module $\sM_m$ and an isomorphism of $\sD^\dagger_{\frX,k}$-modules

$$\vep: \sD^\dagger_{\frX,k} \otimes_{\hsD^{(k,m)}_{\frX,\Q}} \sM_m \car \sM \;.$$

\vskip8pt

If $(m', \sM_{m'},\vep')$ is another such triple, then there is $l \ge \max\{m,m'\}$ and an isomorphism of $\hsD^{(k,l)}_{\frX,\Q}$-modules

$$\vep_l: \hsD^{(k,l)}_{\frX,\Q} \otimes_{\hsD^{(k,m)}_{\frX,\Q}} \sM_m \car \hsD^{(k,l)}_{\frX,\Q}
\otimes_{\hsD^{(k,m')}_{\frX,\Q}} \sM_{m'}$$

\vskip8pt

such that $\vep' \circ \Big({\rm id}_{\sD^\dagger_{\frX,k}} \otimes \vep_l\Big) = \vep$.
\end{prop}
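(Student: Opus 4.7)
The plan is to adapt Berthelot's corresponding results in the classical setting (\cite[3.6, 4.2]{BerthelotDI}) to the sheaves $\tsD^{(m)}_{\frX,k}$ and $\tsD^\dagger_{\frX,k,\Q}$ introduced here. The essential inputs are already at hand: the noetherian property of sections over a basis $\frU$ of open affines from \ref{global_sec_tsD}, the coherence of the sheaves $\tsD^{(m)}_{\frX,k}$, $\tsD^{(m)}_{\frX,k,\Q}$ and $\tsD^\dagger_{\frX,k,\Q}$, and the flatness of the transition maps $\tsD^{(m)}_{\frX,k,\Q}\to\tsD^{(m+1)}_{\frX,k,\Q}$ from \ref{global_sec_tsD}.

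\vskip8pt

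For part (i), full faithfulness is the easier half. Working on $U\in\frU$, the ring $H^0(U,\tsD^{(m)}_{\frX,k})$ is noetherian, so $\Hom_{\tsD^{(m)}_{\frX,k}|_U}(\cM,\cN)$ is a finitely generated module over it; tensoring with $\Q$ commutes with $\Hom$ in this situation by flatness of $\Z\to\Q$, and one patches the local identifications via the sheaf property of $\Hom$. Essential surjectivity is the main work. Given a coherent $\tsD^{(m)}_{\frX,k,\Q}$-module $\sN$, I would choose a finite cover $\frX=\bigcup_\alpha U_\alpha$ by elements of $\frU$ on which $\sN|_{U_\alpha}$ is presented as the cokernel of a matrix over $H^0(U_\alpha,\tsD^{(m)}_{\frX,k,\Q})$. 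Multiplying by a sufficiently high power of $p$ makes the entries of this matrix integral, producing a coherent $\tsD^{(m)}_{\frX,k}$-model $\cM_\alpha$ on $U_\alpha$. On double intersections the patching isomorphisms exist rationally; clearing denominators and passing to the isogeny category $\Coh(\tsD^{(m)}_{\frX,k})_\Q$, these assemble to an object $\cM$ with $\cM_\Q\simeq\sN$.

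\vskip8pt

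For part (ii), by coherence of $\tsD^\dagger_{\frX,k,\Q}$ (statement (iii) of \ref{global_sec_tsD}) and quasi-compactness of $\frX$, there is a finite cover by open affines $V$ on each of which $\sM$ admits a finite presentation
$$(\tsD^\dagger_{\frX,k,\Q}|_V)^a\longrightarrow(\tsD^\dagger_{\frX,k,\Q}|_V)^b\longrightarrow\sM|_V\longrightarrow 0.$$
The presentation map is a finite matrix with entries in $H^0(V,\tsD^\dagger_{\frX,k,\Q})=\varinjlim_m H^0(V,\tsD^{(m)}_{\frX,k,\Q})$, so it descends to some level $m(V)$; by finiteness of the cover a single $m$ works for every $V$. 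The flatness of the transition maps guarantees that the resulting local cokernels are compatible on overlaps after tensoring with $\tsD^\dagger_{\frX,k,\Q}$, and descent furnishes a coherent $\tsD^{(m)}_{\frX,k,\Q}$-module $\sM_m$ together with the required isomorphism $\vep$. For the second part of (ii), the composite $\vep^{-1}\circ\vep'$ and its inverse are morphisms between coherent $\tsD^\dagger_{\frX,k,\Q}$-modules, hence locally represented by finite matrices with entries in some $H^0(V,\tsD^{(\ell)}_{\frX,k,\Q})$ for $\ell$ large enough; choosing $\ell\ge\max\{m,m'\}$ uniformly on the finite cover then produces $\vep_\ell$ and its inverse, whence the identity $\vep'\circ(\mathrm{id}\otimes\vep_\ell)=\vep$ holds after possibly enlarging $\ell$ once more so that the difference of the two maps (which vanishes after $\otimes_{\tsD^{(\ell)}_{\frX,k,\Q}}\tsD^\dagger_{\frX,k,\Q}$) already vanishes at level $\ell$.

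\vskip8pt

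The main obstacle is the gluing step in the essential surjectivity of (i): one must track powers of $p$ carefully so that the patching isomorphisms between local integral models satisfy the cocycle condition on triple intersections up to isogeny. Working in $\Coh(\tsD^{(m)}_{\frX,k})_\Q$ rather than $\Coh(\tsD^{(m)}_{\frX,k})$ is precisely what renders this harmless, and the noetherian sections furnished by \ref{global_sec_tsD} allow the argument of \cite[3.6.4]{BerthelotDI} to be transported without change. No new difficulty is introduced by the presence of the congruence parameter $k$ or by working on a (non-smooth) admissible blow-up $\frX$ of $\frX_0$, since the arguments are purely formal once coherence, noetherian local sections, and flatness of transition maps are available.
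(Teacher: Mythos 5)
Your proposal is correct and follows essentially the same route as the paper: the paper's proof simply verifies that $\tsD^{(m)}_{\frX,k}$ and $\tsD^\dagger_{\frX,k,\Q}$ satisfy the hypotheses of Berthelot's general results (\cite[3.4.5]{BerthelotDI} for (i) and \cite[3.6.2]{BerthelotDI} for (ii), which require only a locally noetherian, quasi-separated formal scheme plus the coherence, noetherian sections, and flat transition maps established in \ref{global_sec_tsD}) and then cites them. You have merely unpacked the internals of those same arguments.
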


\begin{proof} (i) This is \cite[3.4.5]{BerthelotDI}. Note that the sheaf $\hsD^{(k,m)}_{\frX}$ satisfies the conditions in \cite[3.4.1]{BerthelotDI}, by \ref{global_sec_tsD}. We point out that the formal scheme $\cX$ in \cite[sec. 3.4]{BerthelotDI} is not supposed to be smooth over a discrete valuation ring, but only locally noetherian, cf. \cite[sec. 3.3]{BerthelotDI}.

\vskip8pt

(ii) This is \cite[3.6.2]{BerthelotDI}. In this reference the formal scheme is supposed to be noetherian and quasi-separated, but not necessarily smooth over a discrete valuation ring. \end{proof}

\vskip8pt

\begin{thm}\label{acycl_tsD} Let
 $\sE$ be a coherent $\hsD^{(k,m)}_{\frX,\Q}$-module (resp. $\sD^\dagger_{\frX,k}$-module).

\vskip8pt

(i) There is $r(\sE) \in \Z$ such that for all $r \ge r(\sE)$ there is $s \in \Z_{\ge 0}$ and an epimorphism of $\hsD^{(k,m)}_{\frX,\Q}$-modules (resp. $\sD^\dagger_{\frX,k}$-modules)

$$\Big(\hsD^{(k,m)}_{\frX,\Q}(-r)\Big)^{\oplus s} \twoheadrightarrow \sE \; \hskip16pt (\; \mbox{resp.} \; \Big(\sD^\dagger_{\frX,k}(-r)\Big)^{\oplus s} \twoheadrightarrow \sE \;) \;.
$$

\vskip8pt

(ii) For all $i >0$ one has $H^i(\frX, \sE) = 0$.
\end{thm}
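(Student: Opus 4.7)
The plan is to split the argument into two cases, proving the level-$m$ rational case first and then bootstrapping to the $\tsD^\dagger_{\frX,k,\Q}$-case, reducing the rational assertions to the integral results \ref{surj_compl} and \ref{compl_finite_power_torsion} via \ref{integral_models}.

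For a coherent $\tsD^{(m)}_{\frX,k,\Q}$-module $\sE$, I invoke \ref{integral_models}(i) to pick a coherent $\tsD^{(m)}_{\frX,k}$-module $\sE_0$ with $\sE_0\otimes_{\Z}\Q\simeq \sE$. Assertion (i) then follows immediately by applying \ref{surj_compl}(i) to $\sE_0$ to obtain, for all sufficiently large $r$, an epimorphism
$$\bigl(\tsD^{(m)}_{\frX,k}(-r)\bigr)^{\oplus s}\twoheadrightarrow \sE_0,$$
and tensoring this with $\Q$, which is exact. For (ii), the point is that \ref{compl_finite_power_torsion}(i) guarantees that for every $i>0$ the group $H^{i}(\frX,\sE_0)$ is annihilated by some fixed power of $p$. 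Since $\frX$ is a noetherian topological space, sheaf cohomology commutes with the filtered colimit defining $-\otimes_{\Z}\Q$, so
$$H^{i}(\frX,\sE)\;=\;H^{i}(\frX,\sE_0\otimes_{\Z}\Q)\;\simeq\;H^{i}(\frX,\sE_0)\otimes_{\Z}\Q\;=\;0.$$

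For the $\tsD^\dagger_{\frX,k,\Q}$-version, I use \ref{integral_models}(ii) to write $\sE\simeq \tsD^\dagger_{\frX,k,\Q}\otimes_{\tsD^{(m)}_{\frX,k,\Q}}\sM_m$ for some $m$ and some coherent $\tsD^{(m)}_{\frX,k,\Q}$-module $\sM_m$. By the level-$m$ case just established there is a surjection $\bigl(\tsD^{(m)}_{\frX,k,\Q}(-r)\bigr)^{\oplus s}\twoheadrightarrow \sM_m$ for every $r$ large enough; because the transition maps $\tsD^{(m)}_{\frX,k,\Q}\to \tsD^{(m+1)}_{\frX,k,\Q}$ are flat by \ref{global_sec_tsD}(ii), the morphism $\tsD^{(m)}_{\frX,k,\Q}\to \tsD^\dagger_{\frX,k,\Q}$ is flat as well, and tensoring yields the required epimorphism in (i). For (ii), I expand
$$\sE\;=\;\varinjlim_{m'\ge m}\bigl(\tsD^{(m')}_{\frX,k,\Q}\otimes_{\tsD^{(m)}_{\frX,k,\Q}}\sM_m\bigr);$$
each term in the colimit inherits a finite presentation from $\sM_m$ and is therefore coherent over $\tsD^{(m')}_{\frX,k,\Q}$, so by the level-$m'$ case its higher cohomology vanishes. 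As $\frX$ is noetherian, cohomology commutes with the filtered colimit, giving $H^{i}(\frX,\sE)=0$ for all $i>0$.

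The principal subtlety is in (ii): the corresponding integral statement \ref{surj_compl}(ii) only produces vanishing after a sufficiently large Serre twist $\sE(r)$, whereas here vanishing is needed for $\sE$ itself. This obstacle is removed precisely by working rationally, since \ref{compl_finite_power_torsion}(i) makes higher cohomology $p$-power torsion on the integral side, and tensoring with $\Q$ (equivalently, passing to the colimit over $m'$) then kills it outright.
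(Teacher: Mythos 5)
Your proposal is correct and follows essentially the same route as the paper: reduce the level-$m$ rational case to the integral statements \ref{surj_compl} and \ref{compl_finite_power_torsion} via \ref{integral_models}(i) (with $p$-power torsion of the higher cohomology killed by inverting $p$), then handle the $\tsD^\dagger_{\frX,k,\Q}$-case via \ref{integral_models}(ii) and the colimit $\sE=\varinjlim_{\ell\ge m}\sE_\ell$ of coherent $\tsD^{(\ell)}_{\frX,k,\Q}$-modules, using that cohomology commutes with filtered colimits on the noetherian space $\frX$. The only (harmless) deviation is your appeal to flatness of $\tsD^{(m)}_{\frX,k,\Q}\to\tsD^\dagger_{\frX,k,\Q}$ in part (i) of the dagger case, where right-exactness of the tensor product already suffices.
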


\begin{proof} (a) We first show both assertions (i) and (ii) for a coherent $\hsD^{(k,m)}_{\frX,\Q}$-module $\sE$. By \ref{integral_models} (i) there is a coherent  $\hsD^{(k,m)}_{\frX}$-module $\sF$ such that $\sF \otimes_\Z \Q  = \sE$. We use \ref{surj_compl} to find for every $r \ge r_1(\sF)$ a surjection

$$\Big(\hsD^{(k,m)}_{\frX}(-r)\Big)^{\oplus s} \twoheadrightarrow \sF \;,$$

\vskip8pt

for some $s$ (depending on $r$).  Tensoring with $\Q$ gives then the desired surjection onto $\sE$. Hence assertion (i). Furthermore, for $i>0$

$$H^i(\frX,\sE) = H^i(\frX,\sF) \otimes_\Z \Q = 0 \;,$$

\vskip8pt

by \ref{compl_finite_power_torsion}, and this proves (ii).

\vskip8pt

(b) Now suppose $\sE$ is a coherent $\sD^\dagger_{\frX,k}$-module.
By \ref{integral_models} (ii) there is $m \ge 0$ and a coherent module $\sE_m$ over $\hsD^{(k,m)}_{\frX,\Q}$ and an isomorphism of $\sD^\dagger_{\frX,k}$-modules

$$\sD^\dagger_{\frX,k} \otimes_{\hsD^{(k,m)}_{\frX,\Q}} \sE_m \stackrel{\simeq}{\lra} \sE \;.$$

\vskip8pt

Now use what we have just shown for $\sE_m$ in (a) and get the sought for surjection after tensoring with $\sD^\dagger_{\frX,k}$. This proves the first assertion. We have

$$\sE = \sD^\dagger_{\frX,k} \otimes_{\hsD^{(k,m)}_{\frX,\Q}} \sE_m = \varinjlim_{\ell \ge m} \hsD^{(k,l)}_{\frX,\Q} \otimes_{\hsD^{(k,m)}_{\frX,\Q}} \sE_m  = \varinjlim_{\ell \ge m} \sE_\ell$$

\vskip8pt

where $\sE_\ell = \hsD^{(k,l)}_{\frX,\Q} \otimes_{\hsD^{(k,m)}_{\frX,\Q}} \sE_m$ is a coherent $\hsD^{(k,l)}_{\frX,\Q}$-module. Then we have for $i>0$

$$H^i(\frX,\sE) = \varinjlim_{\ell \ge m} H^i(\frX,\sE_\ell) = 0 \;,$$

\vskip8pt

by part (a). And this proves assertion (ii). \end{proof}

\vskip12pt

\subsection{\texorpdfstring{$\frX$}{} is \texorpdfstring{$\hsD^{(k,m)}_{\frX,\Q}$}{}-affine and \texorpdfstring{$\sD^\dagger_{\frX,k}$}{}-affine}
\label{Ddag-affinity}

\begin{prop}\label{prop-genglobal} (i) Let $\sE$ be a coherent $\hsD^{(k,m)}_{\frX,\Q}$-module. Then $\sE$ is generated by its global sections as $\hsD^{(k,m)}_{\frX,\Q}$-module. Furthermore, $\sE$ has a resolution by finite free $\hsD^{(k,m)}_{\frX,\Q}$-modules.

\vskip8pt

(ii) Let $\sE$ be a coherent $\sD^\dagger_{\frX,k}$-module. Then $\sE$ is generated by its global sections as $\sD^\dagger_{\frX,k}$-module. $H^0(\frX,\sE)$ is a $H^0(\frX,\sD^\dagger_{\frX,k})$-module of finite presentation.
Furthermore, $\sE$ has a resolution by finite free $\sD^\dagger_{\frX,k}$-modules.
\end{prop}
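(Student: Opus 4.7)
The plan is to leverage Theorem~\ref{acycl_tsD} together with the noetherian properties of the global-section algebras from \ref{global_sec_tsD}, in a Beilinson--Bernstein-style descent from twisted acyclic sheaves to finite free modules.

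For part (i), I would start with a coherent $\tsD^{(m)}_{\frX,k,\Q}$-module $\sE$ and apply \ref{acycl_tsD}(i) to obtain $r \ge r(\sE)$, $s \ge 0$, and a surjection $\alpha: \tsD^{(m)}_{\frX,k,\Q}(-r)^{\oplus s} \twoheadrightarrow \sE$ whose kernel $\sK$ is coherent since $\tsD^{(m)}_{\frX,k,\Q}$ is a coherent sheaf of rings (\ref{global_sec_tsD}(iii)). The vanishing $H^{1}(\frX,\sK)=0$ from \ref{acycl_tsD}(ii) yields surjectivity of $\Gamma(\alpha)$ on global sections. Writing $D = H^{0}(\frX,\tsD^{(m)}_{\frX,k,\Q})$, the commutative square whose top row $\tsD^{(m)}_{\frX,k,\Q} \otimes_{D} \Gamma(\tsD^{(m)}_{\frX,k,\Q}(-r))^{\oplus s} \to \tsD^{(m)}_{\frX,k,\Q} \otimes_{D} \Gamma(\sE)$ is surjective (by right-exactness of the tensor product) and whose bottom row $\alpha$ is surjective reduces the required generation of $\sE$ by global sections to the corresponding statement for the single sheaf $\tsD^{(m)}_{\frX,k,\Q}(-r)$: if the counit $\tsD^{(m)}_{\frX,k,\Q} \otimes_{D} \Gamma(\tsD^{(m)}_{\frX,k,\Q}(-r)) \to \tsD^{(m)}_{\frX,k,\Q}(-r)$ is surjective, a diagram chase forces the analogous counit for $\sE$ to be surjective as well.

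I would then establish that $\tsD^{(m)}_{\frX,k,\Q}(-r)$ is itself generated by finitely many global sections as a $\tsD^{(m)}_{\frX,k,\Q}$-module, exploiting the $D^{(m)}(\bbG(k))$-action encoded in the algebra map $Q^{(m)}_{X,k}$ of \ref{global_sections_tcD}: this action should make $\Gamma(\frX,\tsD^{(m)}_{\frX,k,\Q}(-r))$ a finitely generated module over the noetherian ring $\widehat{D}^{(m)}(\bbG(k))_{\Q,\theta_0} \simeq H^{0}(\frX,\tsD^{(m)}_{\frX,k,\Q})$ from \ref{global_sec_tsD}(iv), in analogy with generalized Verma modules. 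Finite generators then induce a surjection $(\tsD^{(m)}_{\frX,k,\Q})^{\oplus N} \twoheadrightarrow \tsD^{(m)}_{\frX,k,\Q}(-r)$, and summing this $s$ times before composing with $\alpha$ yields a finite free surjection $(\tsD^{(m)}_{\frX,k,\Q})^{\oplus Ns} \twoheadrightarrow \sE$. The finite free resolution is obtained by iterating the construction on the (coherent) kernel at each step.

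For part (ii), I would invoke \ref{integral_models}(ii) to descend $\sE$ to a coherent $\tsD^{(m)}_{\frX,k,\Q}$-module $\sE_m$ with $\sE \simeq \tsD^\dagger_{\frX,k,\Q} \otimes_{\tsD^{(m)}_{\frX,k,\Q}} \sE_m$, apply part (i) to $\sE_m$, and tensor the resulting finite free resolution up along the flat transition map $\tsD^{(m)}_{\frX,k,\Q} \to \tsD^\dagger_{\frX,k,\Q}$ (flatness from \ref{global_sec_tsD}(ii)), which transports it to a finite free resolution of $\sE$ and delivers generation of $\sE$ by global sections. Applying $\Gamma(\frX,-)$ to the first two terms of this resolution and invoking \ref{acycl_tsD}(ii) then produces the finite presentation of $H^{0}(\frX,\sE)$ over $H^{0}(\frX,\tsD^\dagger_{\frX,k,\Q})$.

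The main obstacle is the step embedded in (i): showing $\tsD^{(m)}_{\frX,k,\Q}(-r)$ is generated by finitely many global sections for arbitrary large $r$. Since $-r$ is a negative twist of an ample line bundle, this cannot follow from a Serre-type generation statement directly, and must instead rely on the $D^{(m)}(\bbG(k))$-equivariance together with the noetherianity and explicit description of $\widehat{D}^{(m)}(\bbG(k))_{\Q,\theta_0}$ provided by \ref{global_sec_tsD}.
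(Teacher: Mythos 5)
Your overall architecture for (i) is the right one — reduce via \ref{acycl_tsD} to the single sheaf $\tsD^{(m)}_{\frX,k,\Q}(-r)$, then bootstrap to finite free resolutions by iterating on coherent kernels — and your treatment of (ii) by descending to some level $m$ via \ref{integral_models}(ii) and tensoring along the flat transition maps is essentially what the paper does. But the step you yourself flag as "the main obstacle" is a genuine gap, and the mechanism you propose to close it does not work. Finite generation of $M = H^0$ of the twisted sheaf over the noetherian algebra $\widehat{D}^{(m)}(\bbG(k))_{\Q,\theta_0}$ (or over $D^{(m)}(\bbG(k))$, which is what \ref{prop-auxiliaryII} actually gives) tells you only that \emph{if} the counit $\tsD^{(m)}_{\frX,k,\Q}\otimes_{D}\Gamma(\tsD^{(m)}_{\frX,k,\Q}(-r))\to\tsD^{(m)}_{\frX,k,\Q}(-r)$ is surjective, then finitely many generators suffice; it says nothing about surjectivity of the counit itself, which is precisely the content of generation by global sections. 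No amount of noetherianity of the distribution algebra can substitute for a geometric input here: for a general smooth projective scheme with group action the analogous counit can fail to be surjective.

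The missing ingredient is the classical Beilinson--Bernstein theorem \cite{BB81} applied on the generic fibre. The paper works with the uncompleted sheaf $\tcD^{(m)}_{X,k}(-r)$ on the scheme $X$, forms the composite $\tcD^{(m)}_{X,k}\otimes_{D^{(m)}(\bbG(k))} M\to\tcD^{(m)}_{X,k}(-r)$ with $M=H^0(X,\tcD^{(m)}_{X,k}(-r))$, and observes that its cokernel $\cE$ is coherent (by noetherianity of $D^{(m)}(\bbG(k))$ and \ref{prop-auxiliaryII}) and satisfies $\cE\otimes\bbQ=0$, because after inverting $p$ the blow-up is an isomorphism and $\tcD^{(m)}_{X,k}(-r)\otimes\bbQ$ becomes a coherent module over the ordinary differential operators on the flag variety over $L$, hence is generated by its global sections by \cite{BB81}. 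Coherence then forces $p^i\cE=0$ for some $i$, and passing to $p$-adic completions and inverting $p$ kills the cokernel, yielding the desired surjection $(\tsD^{(m)}_{\frX,k,\Q})^{\oplus s}\to\tsD^{(m)}_{\frX,k,\Q}(-r)$. Note also that this argument is most naturally carried out at the uncompleted, algebraic level precisely so that \cite{BB81} applies; working directly on the formal scheme, as you propose, obscures where the generic-fibre input enters.
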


\begin{proof} (i) Using \ref{acycl_tsD} it remains to see that any $\hsD^{(k,m)}_{\frX,\Q}$-module of type $\hsD^{(k,m)}_{\frX,\Q}(-r)$ admits a linear surjection $(\hsD^{(k,m)}_{\frX,\Q})^{\oplus s} \ra\hsD^{(k,m)}_{\frX,\Q}(-r)$ for suitable $s\geq 0$. We argue as in \cite[5.1]{Huyghe97}. Let $M:=H^0(X,\sD^{(k,m)}_{X}(-r))$, a finitely generated $D^{(m)}(\bbG(k))$-module by \ref{prop-auxiliaryII}. Consider the linear map of $\sD^{(k,m)}_{X}$-modules equal to the composite

$$ \sD^{(k,m)}_{X}\otimes_{D^{(m)}(\bbG(k))} M \ra \sD^{(k,m)}_{X}\otimes_{H^0(X,\sD^{(k,m)}_{X})} M \ra \sD^{(k,m)}_{X}(-r)$$

\vskip8pt

where the first map is the surjection induced by the map $Q^{(k,m)}_X$ appearing in \ref{global_sections_tcD}.
Let $\cE$ be the cokernel of the composite map. Since $D^{(m)}(\bbG(k))$ is noetherian, the source of the map is coherent and hence $\cE$ is coherent. Moreover, $\cE\otimes\Q=0$ since $\sD^{(k,m)}_{X}(-r)\otimes\Q$ is generated by global sections \cite{BB81}. All in all, there is $i$ with $p^{i}\cE=0$. Now choose a linear surjection  $(D^{(m)}(\bbG(k)))^{\oplus s} \ra M$. We obtain the exact sequence of coherent modules

$$(\sD^{(k,m)}_{X})^{\oplus s} \ra \sD^{(k,m)}_{X}(-r) \ra \cE \ra 0 \;.$$

\vskip8pt

Passing to $p$-adic completions (which is exact in our situation \cite[3.2]{BerthelotDI}) and inverting $p$ yields the linear surjection

$$(\hsD^{(k,m)}_{\frX,\Q})^{\oplus s} \ra \hsD^{(k,m)}_{\frX,\Q}(-r) \;.$$

\vskip8pt

This shows (i).

\vskip8pt

(ii) This follows from (i) exactly as in \cite{Huyghe97}. \end{proof}

\begin{para} {\it The functors $\Loc^{(k,m)}_{\frX}$ and $\Loc^\dagger_{\frX,k}$.} Let $E$ be a finitely generated $H^0(\frX,\hsD^{(k,m)}_{\frX,\Q})$-module (resp. a finitely presented $H^0(\frX,\sD^\dagger_{\frX,k})$-module). Then we let $\Loc^{(k,m)}_{\frX}(E)$ (resp. $\Loc^\dagger_{\frX,k}(E)$) be the sheaf on $\frX$ associated to the presheaf

$$U \rightsquigarrow \hsD^{(k,m)}_{\frX,\Q}(U) \otimes_{H^0(\frX,\hsD^{k,m}_{k,\Q})} E \hskip16pt ({\rm resp.} \;\; U \rightsquigarrow \sD^\dagger_{\frX,k}(U) \otimes_{H^0(\frX,\sD^\dagger_{\frX,k})} E \;) \;.$$

\vskip8pt

It is obvious that $\Loc^{(k,m)}_{\frX}$ (resp. $\Loc^\dagger_{\frX,k}$) is a functor from the category of finitely generated $H^0(\frX,\hsD^{(k,m)}_{\frX,\Q})$-modules (resp. finitely presented $H^0(\frX,\sD^\dagger_{\frX,k})$-modules) to the category of sheaves of modules over $\hsD^{(k,m)}_{\frX,\Q}$ (resp. $\sD^\dagger_{\frX,k}$).
\end{para}

\vskip8pt

\begin{thm}\label{thm-equivalence} (i) The functors $\Loc^{(k,m)}_{\frX}$ and $H^0$ (resp. $\Loc^\dagger_{\frX,k}$ and $H^0$) are quasi-inverse equivalences between the categories of finitely generated $H^0(\frX,\hsD^{(k,m)}_{\frX,\Q})$-modules and coherent $\hsD^{(k,m)}_{\frX,\Q}$-modules (resp. finitely presented $H^0(\frX,\sD^\dagger_{\frX,k})$-modules and coherent $\sD^\dagger_{\frX,k}$-modules).

\vskip8pt

(ii) The functor $\Loc^{(k,m)}_{\frX}$ (resp. $\Loc^\dagger_{\frX,k}$) is an exact functor.
\end{thm}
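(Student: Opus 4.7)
My plan is to follow the classical Beilinson--Bernstein/Berthelot template for a $\sD$-affinity equivalence, built on two inputs already established: the vanishing of higher cohomology of coherent modules (Theorem~\ref{acycl_tsD}(ii)) and the existence of finite free presentations (Proposition~\ref{prop-genglobal}). I treat the level-$m$ and the $\dagger$ cases in parallel; write $\cR$ for the sheaf of rings, either $\tsD^{(m)}_{\frX,k,\Q}$ or $\tsD^\dagger_{\frX,k,\Q}$, and $A = H^{0}(\frX,\cR)$. In the first case $A$ is noetherian by \ref{global_sec_tsD}(i), and in the second case $A$ is the coherent $L$-algebra $\cD^{\mathrm{an}}(\bbG(k)^{\circ})_{\theta_0}$ by \ref{global_sec_tsD}(v); accordingly the relevant $\Loc$-functor is defined on finitely generated, respectively finitely presented, $A$-modules. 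By the very definition of $\Loc$ one has $\Loc(A^{\oplus n})=\cR^{\oplus n}$ for all $n \ge 0$.

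The central observation is that the cohomology vanishing in Theorem~\ref{acycl_tsD}(ii) makes $H^{0}(\frX,\cdot)$ an exact functor on short exact sequences of coherent $\cR$-modules (the long exact sequence in cohomology collapses at $H^{1}=0$). Given a coherent $\cR$-module $\sM$, Proposition~\ref{prop-genglobal} supplies a finite free presentation $\cR^{\oplus a}\to\cR^{\oplus b}\to\sM\to 0$. Applying the exact functor $H^{0}$ yields a presentation $A^{\oplus a}\to A^{\oplus b}\to H^{0}(\frX,\sM)\to 0$ of $A$-modules. Applying the right-exact functor $\Loc$ and using $\Loc(A^{\oplus n})=\cR^{\oplus n}$ in the two leftmost terms, a straightforward five-lemma applied to the natural adjunction morphism $\Loc(H^{0}(\frX,\sM))\to\sM$ shows that it is an isomorphism. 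Conversely, for a finitely generated (resp.\ finitely presented) $A$-module $E$, noetherianity (resp.\ coherence) of $A$ furnishes a finite free presentation $A^{\oplus a}\to A^{\oplus b}\to E\to 0$; applying the right-exact $\Loc$ followed by the exact $H^{0}$ and using the five lemma for the unit morphism $E\to H^{0}(\frX,\Loc(E))$ yields $H^{0}(\frX,\Loc(E))\simeq E$. This proves (i) in both variants.

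For (ii), exactness of $\Loc$ is then a formal consequence: the source category (finitely generated modules over the noetherian $A$, respectively finitely presented modules over the coherent $A$) is abelian, and any equivalence of abelian categories is automatically exact, as it preserves kernels and cokernels; equivalently, since $H^{0}$ is an exact quasi-inverse the functor $\Loc$ inherits exactness. I do not anticipate a serious obstacle at this stage, since the genuine difficulties (reducing cohomological vanishing from very ample twists to arbitrary coherent modules, and descending from $\tcD^{(m)}_{X,k}$-modules through completion and $p$-inversion to $\tsD^\dagger_{\frX,k,\Q}$-modules) have already been absorbed into Theorem~\ref{acycl_tsD} and Proposition~\ref{prop-genglobal}. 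The only minor subtlety worth flagging in the $\dagger$-case is ensuring that finitely presented modules over $A=\cD^{\mathrm{an}}(\bbG(k)^{\circ})_{\theta_0}$ form an abelian category, which is precisely the content of the coherence statement in \ref{global_sec_tsD}(iii) and (v).
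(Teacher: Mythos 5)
Your argument is correct and is essentially the paper's own proof: the paper simply invokes the proofs of \cite[5.2.1, 5.2.3]{Huyghe97}, which run exactly as you describe — exactness of $H^{0}$ from the vanishing theorem, finite free presentations from \ref{prop-genglobal}, and a five-lemma comparison of the two adjunction morphisms, with exactness of $\Loc$ then following formally. No gaps.
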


\begin{proof} The proof of (i) uses the same arguments as the proof of \cite[2.3.7]{NootHuyghe09}. The second assertion then follows because any equivalence between abelian categories is exact. \end{proof}

\section{Localization of representations of \texorpdfstring{$\bbG(L)$}{}}\label{loc}

Although we do recall a few basic facts in the beginning of this section, we assume from now on some familiarity with the theory of locally analytic representations as developed by P. Schneider and J. Teitelbaum \cite{ST02b, ST03}, and we also make use of the point of view introduced by M. Emerton in \cite{EmertonA}.

\vskip8pt

For the sake of convenience, all representations which we consider in this section are on topological $L$-vector spaces, and all modules over distribution algebras are topological $L$-vector spaces. We thus assume throughout this section that the so-called {\it coefficient field}, cf. \cite[beginning of sec. 2]{ST02b}, usually denoted by $K$ in papers like \cite{ST02b, ST03}, over which those topological vector spaces are defined, is equal to our base field $L$. However, all results in this section also hold when the representations (or the modules over distribution algebras) are topological $K$-vector spaces, where $K/L$ is a complete and discretely valued extension (such that the valuation topology on $K$ induces the valuation topology on $L$), cf. \ref{coeff_field}.

\subsection{Locally analytic representations and distribution algebras}\label{dist_algs_and_reps}

\begin{para}\label{module} {\it The module associated to a locally analytic representation.} In the following we will be interested in locally analytic representations of the compact locally $L$-analytic group $\GO = \bbG_0(\fro)$. Let $C^{\rm la}(\GO,L)$ be the space of $L$-valued locally $L$-analytic functions on $\GO$, and let

$$D(\GO,L) := C^{\rm la}(\GO,L)'_b$$

\vskip8pt

be its strong dual, i.e. its continuous dual space equipped with the strong topology, which carries the structure of a Fr\'echet-Stein algebra \cite[5.1]{ST03}. The product of $\delta_1, \delta_2 \in D(\GO,L)$ is defined by

$$(\delta_1 \cdot \delta_2)(f) = \delta_1\Big(x \mapsto \delta_2(y \mapsto f(xy))\Big) \;,$$

\vskip8pt

for $f \in C^{\rm la}(\GO,L)$. Given an admissible locally analytic representation $V$ of $\GO$, cf. \cite[sec. 6]{ST03}, we let $M := V'_b$ be its strong dual, which is, by the very definition of ``admissible representation'', a {\it coadmissible module} over $\DGO$. Explicitly, if we denote by $g.v$ the action of $g \in \GO$ on $v \in V$, then the $\DGO$-module structure on $M$ is given by

$$(\delta \cdot m)(v) = \delta\Big(g \mapsto m(g^{-1}.v)\Big) \;,$$

\vskip8pt

for $m \in M$ and $\delta \in D(\GO,L)$.  For $g \in \GO$ the delta
distribution $\delta_g \in D(\GO,L)$ is defined by $\delta_g(f) = f(g)$. These delta distributions are invertible in $D(\GO,L)$, and the map $g \mapsto \delta_g$ is an injective group homomorphism from $\GO$ into
the group of units of $D(\GO,L)$.

We also recall that the category of coadmissible $D(\GO,L)$-modules is a full abelian subcategory of all
abstract $D(\GO,L)$-modules \cite[Thm. 5.1]{ST03} and, by construction, anti-equivalent to the category of admissible locally analytic $\GO$-representations.
\end{para}

\begin{para}\label{DgkO} {\it The distribution algebras $\DgkO$.} Recall the wide open congruence subgroup $\Gkc$ introduced in \ref{passing_to_completion} and its analytic distribution algebra $\Dgk = \cO(\Gkc)'_b$. Given a continuous representation $W$ of $\GO$, one can consider the subspace $W_{\Gkc-\rm an} \sub W$ of $\Gkc$-analytic vectors, cf. \cite[3.4.1]{EmertonA}. This applies to the action of $\GO$ on the space $C^{\rm cts}(\GO,L)$ of continuous $L$-valued functions given by the formula $(g.f)(x) = f(g^{-1}x)$. With this notation, one has a canonical isomorphism of topological $L$-vector spaces

\begin{numequation}\label{inductive_limit} \varinjlim_k C^{\rm cts}(\GO,L)_{\Gkc-\rm an}\car C^{\rm la}(\GO,L)
\end{numequation}

Following the notation introduced in \cite[proof of 5.3.1]{EmertonA} we denote by $\DgkO$ the strong dual of the space of $\Gkc$-analytic vectors of $C^{\rm cts}(\GO,L)$, i.e.,

$$\DgkO:= (C^{\rm cts}(\GO,L)_{\Gkc-\rm an})'_b \;.$$

\vskip8pt

The ring $\DgkO$ naturally contains $\Dgk$. Moreover, the delta distributions $\delta_g$, for $g$ in the normal subgroup $G_{k+1} := \Gkc(\fro) = \bbG(k+1)(\fro)$ of $\GO$, are contained in this subring too. One obtains a decomposition of $\DgkO$ as a $\Dgk$-module:

\begin{numequation}\label{equ-finitefree}\DgkO = \oplus_{g \in \GO/G_{k+1}} \Dgk \delta_g \;,
\end{numequation}

cf. \cite[proof of 5.3.1]{EmertonA}. This is a topological direct sum decomposition in the sense that the subspace topology of $\Dgk$ is equal to its topology as an $L$-algebra of compact type, and the topology on $\DgkO$ is equal to the product topology on the right of \ref{equ-finitefree}. Dualizing the isomorphism \ref{inductive_limit} then yields
an isomorphism of topological $L$ algebras

$$\DGO \car \varprojlim_k \DgkO \;.$$

\vskip8pt

This is the weak Fr\'echet-Stein structure on the locally analytic distribution algebra $\DGO$ as introduced by Emerton in \cite[Prop. 5.3.1]{EmertonA}. In an obviously similar manner we may define the ring $\DgkOt$ and obtain an isomorphism $\DGOt \car \varprojlim_k \DgkOt$.
\end{para}

\begin{para}\label{DGkO_sequences_on_modules} Let $V$ be again an admissible locally analytic representation of $\GO$, and $M = V'_b$ be as in \ref{module}. The subspace $V_{\Gkc-\rm an} \sub V$ is naturally a nuclear  Fr\'echet space \cite[6.1.6]{EmertonA}, and we let $M_k := (V_{\Gkc-\rm an})'_b$ be its strong dual. It is a space of compact type and a topological $\DgkO$-module which is finitely generated \cite[6.1.13]{EmertonA}. According to \cite[6.1.20]{EmertonA} the modules $M_k:= (V_{\Gkc-\rm an})'$ form a $(\DgkO)_{k\in\bbN}$-sequence, in the sense of \cite[1.2.8]{EmertonA}, for the coadmissible module $M$ relative to the weak Fr\'echet-Stein structure on $\DGO.$ This implies that one has

\begin{numequation}\label{equ-weakfamily}
M_k = \DgkO\hat{\otimes}_{\DGO} M
\end{numequation}

as $\DgkO$-modules for any $k$. Here, the completed tensor product is understood in the sense of \cite[Lem. 1.2.3]{EmertonA}.

\end{para}

\begin{lemma}\label{lem-refine} (i) The $\DgkO$-module $M_k$ is finitely presented.

\vskip8pt

(ii) There are natural isomorphisms

$$D(\bbG(k-1)^\circ,\GO)\otimes_{\DgkO} M_k\car M_{k-1} \;.$$

\vskip8pt

(iii) The natural map $\DgkO\otimes_{\DGO} M\car M_k$ is bijective.

\vskip8pt
\end{lemma}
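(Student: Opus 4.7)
The plan is to follow closely the ${\rm GL}_2$-case treated in \cite[Sec. 5.2]{PSS4}, where the analogous statement is proved by essentially the same arguments.

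For part (i), I would first exploit the crossed product decomposition \ref{equ-finitefree}, which realises $\Dgnn$ as a free module of finite rank over $\cD^{\rm an}(\bbG(k)^\circ)$. Consequently a topological module is finitely generated (respectively finitely presented) over $\Dgnn$ if and only if it is so over $\cD^{\rm an}(\bbG(k)^\circ)$. The algebra $\cD^{\rm an}(\bbG(k)^\circ)$ is a compact type algebra with noetherian defining Banach algebras, as recalled above from \cite[Lem. 5.2.1]{PSS4}. For such algebras, any finitely generated module is automatically finitely presented: one lifts a finite generating system to each defining Banach algebra, uses the noetherianity at each level to choose finitely many relations, and then passes to the inductive limit to obtain a finite presentation. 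Since $M_k$ is already known to be finitely generated over $\Dgnn$ by \cite[Lem. 6.1.13]{EmertonA}, part (i) follows.

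For part (ii), the starting point is the identification \ref{equ-weakfamily}, giving both $M_k = \Dgnn \hat{\otimes}_{D(\GN)} M$ and $M_{k-1} = D(\bbG(k-1)^\circ,\GN)\hat{\otimes}_{D(\GN)} M$. The restriction map $\Dgnn \to D(\bbG(k-1)^\circ,\GN)$ induces a $\Dgnn$-linear map $M_k \to M_{k-1}$, which factors through the canonical map
$$D(\bbG(k-1)^\circ,\GN) \otimes_{\Dgnn} M_k \longrightarrow M_{k-1}$$
whose bijectivity is precisely the claim. Using (i), I would choose a finite presentation $(\Dgnn)^{\oplus a} \to (\Dgnn)^{\oplus b} \to M_k \to 0$ and apply the right-exact functor $D(\bbG(k-1)^\circ,\GN)\otimes_{\Dgnn}(-)$. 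Comparing with the same construction using the completed tensor product, the two yield identical cokernels because the functors agree on finite free modules, and hence on finitely presented modules by right exactness. It then suffices to establish the transitivity
$$D(\bbG(k-1)^\circ,\GN)\hat{\otimes}_{\Dgnn}\bigl(\Dgnn\hat{\otimes}_{D(\GN)} M\bigr) \;\car\; D(\bbG(k-1)^\circ,\GN)\hat{\otimes}_{D(\GN)} M,$$
which is a standard property of Emerton's completed tensor product in the weak Fr\'echet--Stein formalism, cf. \cite[\S 1.2]{EmertonA}.

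The main obstacle is the careful comparison between the ordinary and the completed tensor products. Specifically, one must show that for a finitely presented $\Dgnn$-module such as $M_k$, the canonical map $D(\bbG(k-1)^\circ,\GN)\otimes_{\Dgnn} M_k \to D(\bbG(k-1)^\circ,\GN)\hat{\otimes}_{\Dgnn} M_k$ is an isomorphism; this is exactly where the finite presentation from (i) is indispensable. Granted this comparison, together with the associativity of $\hat{\otimes}$ and the identity $\Dgnn\hat{\otimes}_{\Dgnn}(-)={\rm id}$ on finitely presented topological modules, the computation closes immediately. Throughout the argument one must keep track of the fact that $M$ carries its canonical topology as a coadmissible $D(\GN)$-module and that each $M_k$ carries its compact type topology, so that the tensor products really do coincide with those appearing in the definition of the weak Fr\'echet--Stein sequence.
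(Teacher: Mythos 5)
Your proposal takes the same route the paper does: the paper's entire proof is the reference to \cite[Lem. 5.2.4]{PSS4}, and your reconstruction of that argument --- reduce from $\Dgnn$ to $\cD^{\rm an}(\bbG(k)^\circ)$ via the crossed product decomposition \ref{equ-finitefree}, prove finite presentation, then deduce (ii) from \ref{equ-weakfamily} by comparing $\otimes$ and $\hat{\otimes}$ on finitely presented modules and invoking transitivity of the completed tensor product --- is the intended one. Part (ii) as you set it up is fine, granted part (i).

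The gap is in your justification of part (i). The claim that over a compact type algebra with noetherian defining Banach algebras every finitely generated module is automatically finitely presented is false as a purely algebraic statement. Such an algebra $A=\varinjlim_m A_m$ is coherent, but coherence does not upgrade finite generation to finite presentation: a cyclic module $A/I$ with $I$ a non-finitely-generated ideal is a counterexample, and such ideals exist in inductive limits of noetherian rings with flat transition maps (e.g. the union of the rings $k[x^{1/m!}]$ and the ideal generated by all $x^{1/m!}$). Your sketch of lifting generators to each level and choosing relations there only exhibits $\ker\bigl(A^{\oplus b}\to M_k\bigr)$ as an increasing union of finitely generated submodules, which need not stabilize. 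What actually closes the argument in \cite[Lem. 5.2.4]{PSS4} is the topological structure of $M_k=(V_{\Gnc-\rm an})'_b$: it is a space of compact type with jointly continuous $\cD^{\rm an}(\bbG(k)^\circ)$-action, and Emerton's theory lets one descend it to a finitely generated module over a single noetherian defining Banach algebra $A_m$ whose base change to $A$ recovers $M_k$; noetherianity of $A_m$ and flatness of the transition maps then give the finite presentation. You relegate the topology to bookkeeping at the end of your proposal, but for (i) the descent to a finite Banach level is the essential step and must be made explicit.
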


\begin{proof} The points (i) and (ii) can be proved exactly as \cite[5.2.4]{PSS4}. For (iii) we consider the $\DgkO$-submodule generated inside $M_k$ by $M$. It clearly forms a dense subspace and is closed according to \cite[5.1.1 (ii)]{PSS4}. Hence the map in question is surjective. Moreover, this argument shows that the finitely generated $\DgkO$-module $M_k$ is generated by finitely many elements in the image of $M$. To prove injectivity of the map in question, we abbreviate $A:= \DGO$ and $A_k:= \DgkO$ and consider an element $b_1\otimes x_1+\ldots +b_s\otimes x_s \in A_k\otimes_A M$ such that $b_1x_1+\ldots +b_sx_s=0$ in $M_k$. Consider the homomorphism

$$(A^s_{k'})_{k'} \lra (M_{k'})_{k'}, (a_1, \ldots, a_s)\mapsto a_1x_1+ \ldots +a_sx_s$$

\vskip8pt

where $k'\geq k$. Let $N$ be the kernel of the corresponding map of coadmissible modules $A^s \ra M$. By the above surjectivity argument, there are finitely many elements $(c^{(1)}_1, \ldots, c^{(1)}_s), \ldots, (c^{(r)}_1,\ldots , c^{(r)}_s)$ in $N$ whose images generate the kernel of the map $A^s_{k} \lra M_{k}$ as an $A_k$-module. From here one may follow the argument in the proof of \cite[Cor. 3.1]{ST03} word for word. \end{proof}

\vskip8pt

{\bf Remark.} These results have obvious analogues when the character $\theta_0$ is involved.

\subsection{\texorpdfstring{$\GO$}{}-equivariance and the functor \texorpdfstring{$\Loc^{\GO}$}{}}\label{subsec_G0}

\begin{para}\label{gp_actions_blow_ups} {\it Group actions on blow-ups.} We recall that it is our convention that the group scheme $\bbG_0$ acts on the right on $X_0 = \bbB_0 \bksl \bbG_0$, cf. \ref{group_actions}. This yields a right action of the group $\GO$ on $X_0$, and we denote the automorphism of $X_0$ given by $g \in \GO$ by $\rho_g$, i.e., $\rho_g: X_0 \ra X_0$. As the action of $\GO$ on $X_0$ is on the right, we have $\rho_g \circ \rho_h = \rho_{hg}$ for all $g,h \in \GO$. We also denote by $\rho_g^\sharp: \cO_{X_0} \ra (\rho_g)_* \cO_{X_0}$ the comorphism of $\rho_g$. We then have

\begin{numequation}\label{composition}
(\rho_g)_*(\rho_h^\sharp) \circ \rho_g^\sharp = \rho_{hg}^\sharp \;.
\end{numequation}

Now let $H \sub \GO$ be an open subgroup. We say that an open ideal sheaf $\cI \sub \cO_{X_0}$ is {\it $H$-stable} if for all $g \in H$ the comorphism $\rho_g^\sharp$ maps $\cI \sub \cO_{X_0}$ into $(\rho_g)_*\cI \sub (\rho_g)_* \cO_{X_0}$. In that case $\rho_g^\sharp$ induces a morphism of sheaves of graded rings

$$\bigoplus_{d \ge 0} \cI^d \lra (\rho_g)_*\Big(\bigoplus_{d \ge 0} \cI^d\Big)$$

\vskip8pt

on $X_0$. This morphisms of sheaves in turn induces an automorphism of the blow-up $X = {\rm {\bf Proj}}\Big(\bigoplus_{d \ge 0} \cI^d\Big)$, and the action of $H$ on $X_0$ lifts thus to an action  of $H$ on $X$, which we again denote by $\rho$ for ease of notation.

\vskip8pt

The same considerations apply when we pass to the formal completion $\frX_0$ of $X_0$, in which case we denote the morphism $\frX_0 \ra \frX_0$ induced by $\rho_g$ also by $\rho_g$, for ease of notation.
If now $\frI$ is an open ideal sheaf on $\frX_0$ which is $H$-stable, and if $\frX$ is the formal blow-up of $\frI$, we also say that $\frX$ is {\it $H$-equivariant}. There is at most one way to lift the action of $H$ on $X_0$ (resp. $\frX_0$) to $X$ (resp. $\frX$), because the blow-up morphism induces an isomorphism between the generic fibers $X_\eta \car X_{0,\eta}$ (resp. rigid spaces $\frX^{\rm rig} \car \frX_0^{\rm rig}$), and the group action on the generic fiber (resp. associated rigid space), is thus pre-determined, and in turn determines the action on $X$ (resp. $\frX$) uniquely.
\end{para}

\begin{lemma}\label{Gk_acts_trivially} Let $\pr: \frX \ra \frX_0$ be an admissible blow-up, and assume $k \ge k_\frX$. Then $\frX$ is $G_k = \bbG(k)(\fro)$-equivariant and the induced action of every $g \in G_{k+1}$ on the special fiber of $\frX$ is the identity. Therefore, $G_{k+1}$ acts trivially on the topological space underlying $\frX$.
\end{lemma}

\begin{proof} Consider the action $\mu: X_0 \times_{\Spec(\fro)} \bbG_0 \ra X_0$ of $\bbG_0$ on $X_0$. If $g: \Spec(\fro) \ra \bbG_0$ is in $G_1$, then the induced map on the mod-$\vpi$-fibers $g_s: \Spec(\Fq) \ra \bbG_0  \times_{\Spec(\fro)} \Spec(\Fq)$ is the identity element in $\bbG_0(\Fq)$. Because $\rho_g$ is defined in terms of $\mu$, and since $\mu$ is compatible with base change $\Spec(\Fq) \ra \Spec(\fro)$, it follows that all elements $g \in G_1$ act trivially on the special fiber of $X_0$. In particular, the morphism $\rho_g: \frX_0 \ra \frX_0$ is the identity map on the topological space underlying $\frX_0$ if $g \in G_1$. This takes care of the case when $k=0$ (hence $k_\frX = 0$, and thus $\frX = \frX_0$). We therefore assume in the following $k \ge 1$.

\vskip8pt

For the purpose of this proof we let $\frG$ be the completion of $\bbG_0$ along its special fiber (this formal group scheme is denoted by $\frG(0)$ in \ref{completion}). The quotient morphism $\sigma: \bbG_0 \ra X_0$ induces a quotient morphism $\sigma^\wedge: \frG \ra \frX_0$ of the corresponding formal schemes. Moreover, the right multiplication of $g \in G_0$ on $\bbG_0$ induces a right multiplication $\widetilde{\rho}_g: \frG \ra \frG$, such that the following diagram is commutative

\begin{numequation}\label{action_on_quotient}
\xymatrix{
\frG \ar[r]^{\widetilde{\rho}_g} \ar[d]^{\sigma^\wedge} & \frG \ar[d]^{\sigma^\wedge}\\
\frX_0 \ar[r]^{\rho_g} & \frX_0 \\
}
\end{numequation}

If $g \in G_1$, then, as we remarked above, the map underlying the morphism $\rho_g$ is the identity map on $\frX_0$, and, for the same reason, the map underlying the morphism $\widetilde{\rho}_g$ is the identity map on $\frG$. It follows from the very definition of $G_k$ that for $g \in G_k$, for all open subsets $U \sub \frG$, and for all $f \in \cO_\frG(U)$ one has $(\widetilde{\rho}_g)^\sharp_U(f) \equiv f \mbox{ mod } (\vpi^k)$. If now $V \sub \frX_0$ is an open subset and $U := (\sigma^\wedge)^{-1}(V)$, then \ref{action_on_quotient} gives rise to a commutative diagram

\[\xymatrixcolsep{5pc}
\xymatrix{
\cO_\frG(U) & \ar[l]^{(\widetilde{\rho}_g)^\sharp_U} \cO_\frG(U) \\
\cO_{\frX_0}(V) \ar[u]^{(\sigma^\wedge)^\sharp_V} & \ar[l]^{(\rho_g)^\sharp_V} \cO_{\frX_0}(V) \ar[u]^{(\sigma^\wedge)^\sharp_V}\\
}\]

As $U \ra V$ is a locally trivial fiber bundle, the ring homomorphism $(\sigma^\wedge)^\sharp_V$ is injective \cite[I.5.7 (1)]{Jantzen} and identifies $\cO_{\frX_0}(V)$ with the subring of $\frB$-invariants
of $\cO_\frG(U)$ where $\frB$ denotes the completion of $\bbB_0$
along its special fiber \cite[ I.5.8 (2)]{Jantzen}. In the following we will therefore suppress the notation $(\sigma^\wedge)^\sharp_V$ and view this homomorphism as an inclusion. By the above discussion, we then have for all $f \in \cO_{\frX_0}(V)$ that

$$(\rho_g)^{\sharp}_V (f) - f  = \varpi^k \tilde{f}$$

\vskip8pt

with some $\tilde{f}\in \cO_{\frG}(U)$. Now $\tilde{f}$ is $\frB$-invariant: indeed, $\varpi^k \tilde{f}$ is $\frB$-invariant, and so we have

$$\Delta( \varpi^k \tilde{f}) -  \varpi^k \tilde{f} \otimes 1  = 0$$

\vskip8pt

in $\cO_{\frG}(U) \otimes_{\fro} \cO_{\frB}(\frB)$ where $\Delta$ denotes the comodule map of the $\frB$-module
$\cO_{\frG}(U)$ \cite[I.2.10 (2)]{Jantzen}. Since $\Delta$ is $\fro$-linear and $\cO_{\frG}(U) \otimes_{\fro} \cO_{\frB}(\frB)$ is $\fro$-torsionfree, this implies $\Delta(\tilde{f}) - \tilde{f} \otimes 1  = 0$, as claimed. Since $\tilde{f}$ is $\frB$-invariant, we may conclude that $(\rho_g)^\sharp_V(f) \equiv f \mbox{ mod } (\vpi^k)$ for all $f \in \cO_{\frX_0}(V)$. Now suppose $\frI \sub \cO_{\frX_0}$ is an open ideal sheaf, and assume $\vpi^k \in \frI$ and $g \in G_k \sub G_1$.
Then, for any open subset $V \sub \frX_0$, and any $f \in \frI(V)$ we have $(\rho_g)^\sharp_V(f) = f + \vpi^k\tilde{f}$ for some $\tilde{f} \in \cO_{\frX_0}(V)$. Since $\vpi^k \tilde{f} \in \frI(V)$, we find that $(\rho_g)^\sharp_V$ maps $\frI$ into itself, and the blow-up $\frX$ of $\frI$ is $G_k$-equivariant.

\vskip8pt

If now $g$ is in $G_{k+1}$ we even have $(\rho_g)^\sharp_V(f) = f + \vpi^{k+1}\tilde{f}$ for some $\tilde{f} \in \cO_{\frX_0}(V)$. And since $\vpi^k \in \frI$ we conclude that $(\rho_g)^\sharp_V(f) \equiv f \mbox{ mod } \vpi\frI$. This implies that the morphism induced by $(\rho_g)^\sharp$ on the sheaf $\Big(\bigoplus_{d \ge 0} \frI^d\Big) \otimes_\fro \fro/(\vpi)$, which is a sheaf on the special fiber of $\frX_0$, is the identity. And $\mbox{{\bf Proj}}\Big(\Big(\bigoplus_{d \ge 0} \frI^d\Big) \otimes_\fro \fro/(\vpi)\Big)$ is the special fiber of the formal blow-up $\frX$ of $\frI$. \end{proof}

\vskip8pt

\begin{para}\label{twoproperties} For the rest of this section we let $H \sub \GO$ be an open subgroup. If $\frX \ra \frX_0$ is an $H$-equivariant admissible blow-up with lifted action $\rho$, then there is an induced action of $H$ on the sheaf $\sD^\dagger_{\frX,k}$

\begin{numequation}\label{equ-ringiso0}
\Ad(g): \sD^\dagger_{\frX,k} \car (\rho_g)_*\sD^\dagger_{\frX,k} \;, \;\; P \mapsto \rho_g^\sharp P (\rho_g^\sharp)^{-1} \;,
\end{numequation}

for all $k \ge k_\frX$. This is an action on the left in the sense that

$$(\rho_g)_*(\Ad(h)) \circ \Ad(g) = \Ad(hg) \;,$$

\vskip8pt

as follows from \ref{composition}. Furthermore, the group $G_{k+1}$ is contained in $\Dgk$ as a set of delta distributions, and for $g \in G_{k+1}$ we also write $\delta_g$ for its image in $H^0(\frX, \sD^\dagger_{\frX,k}) = \Dgk_{\theta_0}$, cf. \ref{global_sec_tsD}.
\end{para}

\begin{dfn}\label{dfn-stequiv} Let $\frX$ be an $H$-equivariant admissible blow-up of $\frX_0$. A {\it strongly $H$-equivariant
$\sD^\dagger_{\frX,k}$-module} is a $\sD^\dagger_{\frX,k}$-module $\sM$ together with a family $(\phi_g)_{g \in H}$ of isomorphisms

$$\phi_g: \sM \lra (\rho_g)_* \sM$$

\vskip8pt

of sheaves of $L$-vector spaces, satisfying the following conditions:

\vskip8pt

\begin{enumerate}
\item For all $g,h \in H$ we have $(\rho_g)_*(\phi_h) \circ \phi_g =\phi_{hg}$.

\vskip8pt

\item For all open subsets $U \sub \frX$, all $P \in \sD^\dagger_{\frX,k}(U)$, and all $m \in \sM(U)$ one has $\phi_g(P.m) = \Ad(g)(P).\phi_g(m)$.

\vskip8pt

\item\footnote{To make sense of this condition, we use that elements $g \in G_{k+1}$ act trivially on the topological space underlying $\frX$, cf. \ref{Gk_acts_trivially}.} For all $g \in H \cap G_{k+1}$ the map $\phi_g: \sM \ra (\rho_g)_* \sM  = \sM$ is equal to the multiplication by $\delta_g \in H^0(\frX, \sD^\dagger_{\frX,k})$.
\end{enumerate}

\vskip8pt

A morphism between two strongly $H$-equivariant $\sD^\dagger_{\frX,k}$-modules $(\sM, (\phi^\sM_g)_{g \in H})$ and \linebreak $(\sN,(\phi^\sN_g)_{g \in H})$ is a $\sD^\dagger_{\frX,k}$-linear morphism $\psi: \sM \ra \sN$ such that $$\phi^\sN_g \circ \psi = (\rho_g)_*(\psi) \circ \phi^\sM_g$$ for all $g \in H$. We denote the category  of strongly $H$-equivariant $\sD^\dagger_{\frX,k}$-modules which are, moreover, coherent as $\sD^\dagger_{\frX,k}$-modules by ${\rm Coh}(\sD^\dagger_{\frX,k},H)$.
\end{dfn}

\vskip8pt

{\bf Remarks.} 'Strongly equivariant' refers to the additional condition that the action coincides with multiplication by $\delta_g$ if $g \in H \cap G_{k+1}$. This is the analogue of \cite[Prop. 2.6]{VandenBerghED} in our situation. We also note that any $\sD^\dagger_{\frX,k}$-module is strongly $G_{k+1}$-equivariant for the natural $G_{k+1}$-action, cf. \ref{Gk_acts_trivially}. The following result could be stated in greater generality for $H$-equivariant blow-ups $\frX \ra \frX_0$ if we had introduced the ring $D(\bbG(k)^\circ,H)$ also for open subgroups $H \sub \GO$ (containing $G_{k+1}$) instead of just $\GO$.

\vskip8pt

\begin{thm} Let $\frX \ra \frX_0$ be a $\GO$-equivariant admissible blow-up, and let $k \ge k_\frX$. The functors $\Loc^\dagger_{\frX,k}$ and $H^0$ induce quasi-inverse equivalences between the category of finitely presented $D(\bbG(k)^\circ,G_0)_{\theta_0}$-modules and ${\rm Coh}(\sD^\dagger_{\frX,k},\GO)$.
\end{thm}

\begin{proof} This follows from \ref{thm-equivalence}, \ref{global_sec_tsD}, the definition of ${\rm Coh}(\sD^\dagger_{\frX,k},\GO)$, and the description of $D(\bbG(k)^\circ,G_0)$ in \ref{equ-finitefree}. \end{proof}

\begin{para} Suppose now that $\pi:\frX' \ra \frX$ is a $\GO$-equivariant morphism over $\frX_0$ between admissible formal $\GO$-equivariant blow-ups of $\frX_0$ (whose lifted actions we denote by $\rho^{\frX'}$ and $\rho^{\frX}$ respectively), and that $k \ge k_\frX$ and $k' \geq  \max\{k_{\frX'},k\}$. According to \ref{prop-exactdirectimage} there is then a morphism of sheaves of rings

\begin{numequation}\label{equ-transit_sheaf}
\Psi: \pi_*\sD^\dagger_{\frX',k'} = \sD^\dagger_{\frX,k'} \hra \sD^\dagger_{\frX,k}
\end{numequation}
which is $\GO$-equivariant, i.e. satisfying

$${\rm Ad}(g) \circ \Psi = (\rho^{\frX}_g)_*(\Psi) \circ \pi_*({\rm Ad}(g))$$

\vskip8pt

for all $g\in\GO$. Suppose we are given two modules $\sM_{\frX'} \in {\rm Coh}(\sD^\dagger_{\frX',k'},\GO)$ and $\sM_{\frX}\in {\rm Coh}(\sD^\dagger_{\frX,k},\GO)$ together with a morphism

$$\psi: \pi_*\sM_{\frX'} \lra \sM_{\frX}$$

\vskip8pt

linear relative to (\ref{equ-transit_sheaf}) and which is $\GO$-equivariant, i.e. satisfying

$$\phi^{\sM_\frX}_g \circ \psi = (\rho^{\frX}_g)_*(\psi) \circ \pi_*(\phi^{\sM_{\frX'}}_g)$$

\vskip8pt

for all $g\in G_0$. We obtain thus a morphism

$$\sD^\dagger_{\frX,k} \otimes_{\pi_* \sD^\dagger_{\frX',k'}} \pi_*\sM_{\frX'} \lra \sM_\frX$$

\vskip8pt

of $\sD^\dagger_{\frX,k}$-modules. Denote, by $\sK$ the submodule of $\sD^\dagger_{\frX,k} \otimes_{\pi_* \sD^\dagger_{\frX',k'}} \pi_*\sM_{\frX'}$ locally generated by all elements of the form $P \delta_h \otimes m - P \otimes (h.m)$, where $h\in G_{k+1}$, $m$ is a local section of $\pi_* \sM_{\frX'}$, and $P$ is a local section of $\sD^\dagger_{\frX,k}$. For convenience we will abbreviate the quotient $(\sD^\dagger_{\frX,k} \otimes_{\pi_*\sD^\dagger_{\frX',k'}}  \pi_*\sM_{\frX'}) / \sK$ by

$$\sD^\dagger_{\frX,k} \otimes_{\pi_*\sD^\dagger_{\frX',k'},G_{k+1}}  \pi_*\sM_{\frX'} \;.$$

\vskip8pt

Now since the target of the preceding morphism is strongly equivariant, the morphism will factor through this quotient and we thus obtain a morphism of $\sD^\dagger_{\frX,k}$-modules

\begin{numequation}\label{factormap} \overline{\psi}: \sD^\dagger_{\frX,k} \otimes_{\pi_*\sD^\dagger_{\frX',k'},G_{k+1}}  \pi_*\sM_{\frX'} \lra \sM_{\frX} \;.
\end{numequation}

The domain of this morphism lies in ${\rm Coh}(\sD^\dagger_{\frX,k},\GO)$ when we equip it with the action defined on simple tensors by

$$g.( P \otimes m) = \Ad(g)(P) \otimes (g.m) \;,$$

\vskip8pt

for $g \in \GO$, where $P$ and $m$ are local sections of $\sD^\dagger_{\frX,k}$ and $\pi_* \sM_{\frX'}$, respectively. Since (\ref{equ-transit_sheaf}) is $G_0$-equivariant, the map (\ref{factormap}) is then seen to be in fact a morphism in ${\rm Coh}(\sD^\dagger_{\frX,k},\GO)$. The question, in which situations this morphism will actually be an isomorphism will be crucial in the definition of a coadmissible $G_0$-equivariant arithmetic $\sD$-module, cf. \ref{dfn-coadmod0} below.

\vskip8pt

We finish this paragraph by an auxiliary result which will be used in the proof of thm. \ref{prop-equivalenceII}.
\end{para}

\begin{lemma} \label{lem-aux}
Let $\frX',\frX\in \cF_{\frX_0}$ be $G_0$-equivariant. Suppose $(\frX',k')\succeq (\frX,k)$ with canonical morphism $\pi:\frX'\ra\frX$ over $\frX_0$ and let $M$ be a coherent
$D(\bbG(k')^\circ,G_0)_{\theta_0}$-module with localization $\sM=\Loc^\dagger_{\frX',k'}(M)\in {\rm Coh}(\sD^\dagger_{\frX',k'},G_0)$. Then there is a canonical isomorphism in ${\rm Coh}(\sD^\dagger_{\frX,k},G_0)$ given by
$$\sD^\dagger_{\frX,k} \otimes_{\pi_* \sD^\dagger_{\frX',k'},G_{k+1}} \pi_*\sM \car \Loc^\dagger_{\frX,k}(D(\bbG(k)^\circ,\GO)\otimes_{D(\bbG(k')^\circ,\GO)} M). $$
\end{lemma}

\begin{proof} We denote a system of representatives in $G_{k+1}$ for the cosets in $G_{k+1}/G_{k'+1}$ by $\tH$. For simplicity, we abbreviate

$$D(k):=\Dgkt \hskip10pt {\rm and}\hskip10pt D(k,\GO):=\DgkOt$$

\vskip8pt

and similarly for $k'$. We have the natural inclusion $D(k)\hookrightarrow D(k,\GO)$ from \ref{equ-finitefree} which is compatible with variation in $k$. Now suppose $M$ is a $D(k',\GO)$-module. We then have the free $D(k)$-module $D(k)^{\oplus M\times \tH}$
on a basis $e_{m,h}$ indexed by the elements $(m,h)$ of the set $M\times R$. Its formation is functorial in $M$: if $M'$ is another module and $f: M \ra M'$ a linear map, then $e_{m,h} \ra e_{f(m),h}$ induces a linear map between the corresponding free modules. The free module comes with a linear map

$$f_M: D(k)^{\oplus M\times \tH} \ra D(k)\otimes_{D(k')} M$$

\vskip8pt

given by

$$\oplus_{(m,h)} \lambda_{m,h}e_{m,h} \mapsto (\lambda_{m,h} \delta_h) \otimes m - \lambda_{m,h} \otimes (\delta_h \cdot m)$$

\vskip8pt

for $\lambda_{m,h}\in D(k)$ where we consider $M$ a $D(k')$-module via the inclusion $D(k')\hookrightarrow D(k',\GO)$. Note that, since $M$ is a $D(k',\GO)$-module, and because $\GO$ is contained in $D(k',\GO)$, the expression $\delta_h \cdot m$ is defined for any particular $h \in G_{k+1}$. The linear map is visibly functorial in $M$ and gives rise to the sequence of linear maps

$$ D(k)^{\oplus M\times \tH}\stackrel{f_M}{ \lra} D(k)\otimes_{D(k')} M \stackrel{can_M}{ \lra}
D(k,\GO)\otimes_{D(k',\GO)} M \lra 0$$

\vskip8pt

where the second map is induced from the inclusion $D(k')\hookrightarrow D(k',\GO)$.
The sequence is functorial in $M$, since so are both occuring maps.

\vskip8pt

{\it Claim 1: If $M$ is a finitely presented $D(k',\GO)$-module, then the above sequence is exact.}

\vskip8pt

\begin{proof} This can be proved as in the proof of \cite[Prop. 5.3.5]{PSS4}. \end{proof}

\vskip8pt

{\it Claim 2: Suppose $M$ is a finitely presented $D(k')$-module and let $\sM:=  \Loc^\dagger_{\frX',k'}(M)$. The natural morphism

$$\Loc^\dagger_{\frX,k}(D(k)\otimes_{D(k')} M)\car \sD^\dagger_{\frX,k} \otimes_{\pi_*\sD^\dagger_{\frX',k'}} \pi_*\sM $$

\vskip8pt

is bijective.}

\vskip8pt

\begin{proof} The functor $\pi_*$ is exact on coherent $\sD^\dagger_{\frX',k'}$-modules according to \ref{prop-exactdirectimage}.
Choosing a finite presentation of $M$ reduces to the case $M=D(k')$ which is obvious. \end{proof}

\vskip8pt

Now let $M$ be a finitely presented $D(k',\GO)$-module. Let $m_1, \ldots ,m_r$ be generators for $M$ as a $D(k')$-module. We have a sequence of $D(k)$-modules

$$\bigoplus_{i,h} D(k)e_{m_i,h}\stackrel{f'_M}{ \lra} D(k)\otimes_{D(k')} M \stackrel{can_M}{ \lra}
D(k,\GO)\otimes_{D(k',\GO)} M \lra 0$$

\vskip8pt

where $f'_M$ denotes the restriction of the map $f_M$ to the free submodule of $D(k)^{\oplus M\times \tH}$
generated by the finitely many vectors $e_{m_i,h}, i=1,\ldots,r$, $h \in \tH$. Since ${\rm im}(f'_M)={\rm im}(f_M)$ the sequence is exact by the first claim. Since it consists of finitely presented $D(k)$-modules, we may apply the exact functor $\Loc^\dagger_{\frX,k}$ to it. By the second claim, we get an exact sequence

$$ (\sD^\dagger_{\frX,k})^{\oplus r|\tH|} \ra \sD^\dagger_{\frX,k} \otimes_{\pi_*\sD^\dagger_{\frX',k'}} \pi_*\sM \ra
\Loc^\dagger_{\frX,k}(D(k,\GO)\otimes_{D(k',\GO)} M) \ra 0$$

\vskip8pt

where $\sM = \Loc^\dagger_{\frX',k'}(M)$. The cokernel of the first map in this sequence equals by definition

$$\sD^\dagger_{\frX,k} \otimes_{\pi_* \sD^\dagger_{\frX',k'},G_{k+1}} \pi_*\sM \;,$$

\vskip8pt

whence an isomorphism
$$\sD^\dagger_{\frX,k} \otimes_{\pi_* \sD^\dagger_{\frX',k'},G_{k+1}} \pi_*\sM \car \Loc^\dagger_{\frX,k}(D(k,\GO)\otimes_{D(k',\GO)} M).$$ This proves the lemma. \end{proof}

\begin{para}\label{intro-projlim} The purpose of the rest of this section is to first explain how to form $\GO$-equivariant compatible systems of coherent $\sD^\dagger_{\frX,k}$-modules when the formal models $\frX$ and the congruence levels $k$ vary in a suitable family. Here we will only be considering formal models of the rigid-analytic flag variety which are admissible formal blow-ups of $\frX_0$. In a second step, we will relate such $G_0$-equivariant systems to coadmissible $D(G_0,L)_{\theta_0}$-modules thus establishing a version of the classical {\it localization theorem} for equivariant algebraic $D$-modules \cite{BB81} in our setting. In sec. \ref{G_equivariance} these constructions will be generalized to the setting of $G$-equivariant compatible systems.

\vskip8pt

We recall that we denote by $\bbX = \bbB \bksl \bbG$ the flag variety of $\bbG$, and by $\bbX^\rig$ the rigid-analytic space associated by the GAGA functor to $\bbX$, cf. \ref{models_gp_actions}. Furthermore, we denote by $\frX_\infty$ the projective limit of all formal models of $\bbX^\rig$ (in the sense of \ref{models_gp_actions}). This space is known to be homeomorphic to the adic space corresponding to $\bbX^\rig$, cf. \cite[Thm. 4 in sec. 2, Thm. 4 in sec. 3]{SchnVPut} where this space is denoted by ${\rm Val}(\bbX^\rig)$.

\vskip8pt

Consider the set $\cF_{\frX_0}$ of admissible formal blow-ups $\frX \ra \frX_0$. This set is ordered by $\frX' \succeq \frX$ if the blow-up morphism $\pi: \frX' \ra \frX_0$ factors as the composition of a morphism $\frX' \ra \frX$ and the blow-up morphism $\frX \ra \frX_0$. In fact, the morphism $\frX' \ra \frX$ is then necessarily unique \cite[II, 7.14]{HartshorneA}, and is itself a blow-up morphism \cite[ch. 8, 1.24]{LiuBook}. By \cite[Remark 10 in sec. 8.2]{BoschLectures} the set $\cF_{\frX_0}$ is directed in the sense that any two elements have a common upper bound,
and it is cofinal in the set of all formal models. In particular, $\frX_\infty=\varprojlim_{\cF_{\frX_0}} \frX$.
\end{para}

\vskip8pt

\begin{prop}\label{prop-cofinal} Any formal model $\frX$ of $\bbX^{\rm rig}$ is dominated by one which is a $\GO$-equivariant admissible blow-up of $\frX_0$.
\end{prop}

\begin{proof}  By \cite[Remark 10 in sec. 8.2]{BoschLectures} we may assume that $\frX$ is already an admissible blow-up of $\frX_0$. Let $\cI$ be the ideal which is blown up to obtain $\frX$. If $\vpi^k \in \cI$ for some $k \ge 1$, then $G_k$ acts trivially on the topological space underlying $\frX_0$ and stabilizes $\cI$ in the sense that $\rho_g^\sharp: \cO_{\frX_0} \ra \cO_{\frX_0}$ maps $\cI$ into $\cI$ for all $g \in G_k$. Let $1 = g_1, \ldots, g_N$ be a system of representatives for $\GO/G_k$ and let $\cJ$ be the product of the finitely many ideals $\rho^\sharp_{g_i}(\cI)$. Then $\cJ$ is $\GO$-stable and contains $\cI$. Blowing up $\cJ$ on $\frX_0$ yields a $\GO$-stable formal scheme $\frX'$, and $\frX'$ is also the admissible formal blow-up of the sheaf $\pr^{-1}\cJ \cdot \cO_\frX$ on $\frX$, and the blow-up morphism $\frX' \ra \frX_0$ factors as the composition of the blow-up morphisms $\frX' \ra \frX \ra \frX_0$. \end{proof}

\vskip8pt

\begin{dfn}\label{partial_ordering} We denote the set of pairs $(\frX,k)$, where $\frX \in \cF_{\frX_0}$ and $k \in \bbN$ satisfies $k \ge k_\frX$,
by $\underline{\cF}_{\frX_0}$. This set is ordered by $(\frX',k') \succeq (\frX,k)$ if and only if $\frX' \succeq \frX$ and $k' \ge k$.
\end{dfn}

\vskip8pt

Since $\cF_{\frX_0}$ is directed, the set $\underline{\cF}_{\frX_0}$ is directed, too.

\begin{lemma}\label{translated_model} Let $\frI$ be an open ideal sheaf on $\frX_0$, and let $g \in \GO$. Then $\frK = (\rho_g^\sharp)^{-1}((\rho_g)_*(\frI))$ is again an open ideal sheaf on $\frX_0$. Let $\frX$ be the blow-up of $\frI$, and let $\frX.g$ be the blow-up of $\frK$. Then there is a morphism $\rho_g: \frX \ra \frX.g$ such that the following diagram is commutative (where the vertical maps are the blow-up morphisms):

$$\xymatrix{
\frX \ar[r]^{\rho_g} \ar[d] & \frX.g \ar[d]\\
\frX_0 \ar[r]^{\rho_g} & \frX_0 \\
}$$

\vskip8pt

We have $k_{\frX.g} = k_\frX$. Moreover, for any two elements $g,h \in G_0$ we have a canonical isomorphism $(\frX.g).h \simeq \frX.(gh)$, and the morphism $\frX \stackrel{\rho_g}{\lra} \frX.g \stackrel{\rho_h}{\lra} (\frX.g).h \simeq \frX.(gh)$ is equal to $\rho_{gh}$. This gives a right action of the group $G_0$ on the family $\cF_{\frX_0}$.
\end{lemma}

\begin{proof} It is easy to check that $\frK$ is indeed an open ideal sheaf. Moreover, the comorphism $\rho_g^\sharp: \cO_{\frX_0} \ra
(\rho_g)_* \cO_{\frX_0}$ induces a morphism

\begin{numequation}\label{translation_morphism}\bigoplus_{d \ge 0} \frK^d \lra (\rho_g)_*\left(\bigoplus_{d \ge 0} \frI^d\right)
\end{numequation}

of sheaves of graded rings which is linear with respect to $\rho_g^\sharp$ and which coincides with $\rho_g^\sharp$ in degree zero. The morphism of sheaves \ref{translation_morphism} induces the morphism between the blow-ups $\frX$ and $\frX.g$. That \ref{translation_morphism} is linear with respect to $\rho_g^\sharp$ implies the existence of the commutative diagram. The assertion about the congruence levels follows straightforwardly from the definition \ref{defkX}. The remaining assertions follow directly from the construction. \end{proof}

\vskip8pt

\begin{cor}\label{cor-actionpi} Assume that $(\frX',k') \succeq (\frX,k)$ for $\frX,\frX'\in\cF_{\frX_0}$ and let $\pi: \frX' \ra \frX$ be the unique morphism over $\frX_0$. Let $g \in \GO$. Then $(\frX'.g,k') \succeq (\frX.g,k)$ and if we denote the unique morphism $\frX'.g \ra \frX.g$ over $\frX_0$ by $\pi.g$, then the diagram

$$\xymatrix{
\frX' \ar[r]^{\rho_g} \ar[d]^{\pi} & \frX'.g \ar[d]^{\pi.g}\\
\frX \ar[r]^{\rho_g} & \frX.g \\
}$$

is commutative.
\end{cor}

\begin{proof} Follows easily from the preceding lemma. \end{proof}

\vskip8pt

\begin{dfn}\label{dfn-coadmod0} A {\it coadmissible $\GO$-equivariant arithmetic $\sD$-module} on $\cF_{\frX_0}$ consists of a family $\sM := (\sM_{\frX,k})$ of coherent $\sD^\dagger_{\frX,k}$-modules $\sM_{\frX,k}$, for all $(\frX,k) \in \underline{\cF}_{\frX_0}$, with the following properties:

\vskip8pt

{\rm (a)} For any $g \in G_0$ with morphism $\rho_g: \frX \ra \frX.g$ (cf. \ref{translated_model}), there exists an isomorphism

$$\phi_g: \sM_{\frX.g,k} \lra (\rho_g)_*\sM_{\frX,k}$$

\vskip8pt

of sheaves of $L$-vector spaces, satisfying the following conditions:

\vskip8pt

\begin{enumerate}
\item For all $g,h \in G_0$ we have $(\rho_g)_*(\phi_h) \circ \phi_g = \phi_{hg}$.

\vskip8pt

\item For all open subsets $U \sub \frX.g$, all $P \in \sD^\dagger_{\frX.g,k}(U)$, and all $m \in \sM_{\frX.g,k}(U)$ one has $\phi_g(P.m) = \Ad(g)(P).\phi_g(m)$.

\vskip8pt

\item\footnote{To make sense of this condition, we use that for $k \ge k_\frX$ the action of $G_{k+1}$ on $\frX_0$ lifts to $\frX$, cf. \ref{Gk_acts_trivially}. In this case $\frX.g = \frX$, and elements $g \in G_{k+1}$ act trivially on the topological space underlying $\frX$.} For all $g \in G_{k+1}$ the map $\phi_g: \sM_{\frX,k} \ra (\rho_g)_* \sM_{\frX,k}  = \sM_{\frX,k}$ is equal to the multiplication by $\delta_g \in H^0(\frX,\sD^\dagger_{\frX,k})$.
\end{enumerate}

\vskip5pt

{\rm (b)} Suppose $\frX', \frX \in \cF_{\frX_0}$ are both $\GO$-equivariant, and assume further that $(\frX',k') \succeq (\frX,k)$, and that $\pi: \frX' \ra \frX$ is the unique morphism over $\frX_0$. In this situation we require the existence of a transition morphism $\psi_{\frX',\frX}: \pi_*\sM_{\frX',k'} \ra \sM_{\frX,k}$, linear relative to the canonical morphism $\Psi: \pi_*\sD^\dagger_{\frX',k'} \ra \sD^\dagger_{\frX,k}$ (\ref{equ-transit_sheaf}) and satisfying

\begin{numequation}\label{compatible(a)(b)1} \phi^{\sM_{\frX,k}}_g \circ \psi_{\frX', \frX} = (\rho_g^{\frX})_*(\psi_{\frX',\frX}) \circ \pi_*(\phi^{\sM_{\frX',k'}}_g)
\end{numequation}

for any $g \in \GO$ (note that $\pi_*\circ(\rho_g^{\frX'})_*=(\rho_g^{\frX})_*\circ\pi_*$ according to cor. \ref{cor-actionpi} and so the composition of maps on the right-hand side makes sense). The morphism induced by $\psi_{\frX',\frX}$, cf \ref{factormap},

\begin{numequation}\label{isocondition1}
\overline{\psi}_{\frX', \frX}: \sD^\dagger_{\frX,k} \otimes_{\pi_*\sD^\dagger_{\frX',k'},G_{k+1}}  \pi_*\sM_{\frX'} \car \sM_\frX
\end{numequation}

is required to be an isomorphism of $\sD^\dagger_{\frX,k}$-modules. Additionally, the morphisms $\psi_{\frX', \frX}: \pi_*\sM_{\frX',k'} \ra \sM_{\frX,k}$ are required to satisfy the transitivity condition $\psi_{\frX',\frX} \circ \pi_*(\psi_{\frX'',\frX'}) = \psi_{\frX'',\frX}$, whenever $(\frX'',k'') \succeq (\frX',k') \succeq (\frX,k)$ in $\underline{\cF}_{\frX_0}$. Moreover, $\psi_{\frX,\frX} = {\rm id}_{\sM_{\frX,k}}$.

\vskip8pt

A morphism $\sM \ra \sN$ between two such modules consists of morphisms $\sM_{\frX,k} \ra \sN_{\frX,k}$ of $\sD^\dagger_{\frX,k}$-modules compatible with the extra structures. We denote the resulting category by $\sC^{\GO}_{\frX_0}$.
\end{dfn}

\vskip8pt

\begin{para} We now build the bridge to the category of coadmissible $\DGOt$-modules, cf. \ref{module}. Given such a module $M$ we have its associated admissible locally analytic $\GO$-representation $V=M'_b$ together with its subspace of $\Gkc$-analytic vectors $V_{\Gkc-\rm an}\subset V$. The latter is stable under the $\GO$-action and its dual $M_k:= (V_{\Gkc-\rm an})'$ is a finitely presented $\DgkOt$-module, cf. \ref{lem-refine}. In this situation thm. \ref{thm-equivalence} produces a coherent $\sD^\dagger_{\frX,k}$-module

$$\Loc^\dagger_{\frX,k}(M_k)= \sD^\dagger_{\frX,k} \otimes_{\Dgkt} M_k$$

\vskip8pt

for any element $(\frX,k)$ in $\underline{\cF}_{\frX_0}$. On the other hand, let $\sM$ be an arbitrary coadmissible $\GO$-equivariant arithmetic $\sD$-module on $\cF_{\frX_0}$. The transition morphisms $\psi_{\frX',\frX}: \pi_*\sM_{\frX',k'} \ra \sM_{\frX,k}$ induce maps $H^0(\frX',\sM_{\frX',k'}) \ra H^0(\frX,\sM_{\frX,k})$ on global sections. We let

$$ \Gamma(\sM):=\varprojlim_{(\frX,k) \in \underline{\cF}_{\frX_0}} H^0(\frX,\sM_{\frX,k})$$

where the projective limit is taken in the sense of abelian groups and over the cofinal subfamily, cf. prop \ref{prop-cofinal}, consisting
 of those $(\frX,k)$ with $\GO$-equivariant $\frX$. This limit naturally carries the structure of a coadmissible $\DGOt$-module, as will follow from part (ii) of the next theorem.
\end{para}

\begin{thm}\label{prop-equivalenceII} (i) The family

$$\Loc^{\GO}(M):=(\Loc^\dagger_{\frX,k} (M_{k}))_{(\frX,k) \in \underline{\cF}_{\frX_0}}$$

\vskip8pt

forms a coadmissible $\GO$-equivariant arithmetic $\sD$-module on $\cF_{\frX_0}$, i.e., gives an object of $\sC^{\GO}_{\frX_0}$. The formation of $\Loc^{\GO}(M)$ is functorial in $M$.

\vskip8pt

(ii) The functors $\Loc^{\GO}$ and $\Gamma(\cdot)$ induce quasi-inverse equivalences between the category of coadmissible $\DGOt$-modules and $\sC^{\GO}_{\frX_0}$.
\end{thm}

\vskip8pt

\begin{proof} Let $M$ be a coadmissible $D(G_0,L)_{\theta_0}$-module and let $\sM\in\sC^{\GO}_{\frX_0}$. Both parts of the theorem follow from the four following assertions.
\vskip8pt

{\it Assertion 1: One has $\Loc^{\GO}(M) \in \sC^{\GO}_{\frX_0}$ and $\Loc^{\GO}(M)$ is functorial in $M$.}

\vskip8pt

\begin{proof} We start by verifying condition ${\rm (a)}$ for $\Loc^{\GO}(M)$ and define the morphisms, for $g \in \GO$,

$$\phi_g: \Loc^{\GO}(M)_{\frX.g,k}\lra  (\rho_g)_*\Loc^{\GO}(M)_{\frX,k}$$

\vskip8pt

satisfying the requirements ${\rm (i)}, {\rm (ii)}$ and ${\rm (iii)}$ in definition \ref{dfn-coadmod0}. So consider

$$\Loc^{\GO}(M)_{\frX.g,k} = \Loc^\dagger_{\frX.g,k}(M_k) = \sD^\dagger_{\frX.g,k}\otimes_{\cD^{\rm an}(\bbG(k)^\circ)_{\theta_0}} M_{k} \;.$$

\vskip8pt

Let $\tilde{\phi}_g: M_{k} \ra M_{k}$ denote the map dual to the map
$V_{\bbG(k)^\circ-{\rm an}} \ra V_{\bbG(k)^\circ-{\rm an}}$ given by $w\mapsto g^{-1}w$. Let $U \sub \frX.g$ be an open subset and $P \in \sD^\dagger_{\frX.g,k}(U)$, $m\in M_{k}$.
We define

$$\phi_g (P\otimes m) :=  \Ad(g)(P)\otimes \tilde{\phi}_g(m) \;.$$

\vskip8pt

One has an isomorphism

$$(\rho_g)_*\left(\Loc^\dagger_{\frX',k'}(M_{k'})\right)\car
\left((\rho_g)_*\sD^\dagger_{\frX',k'}\right) \otimes_{\cD^{\rm an}(\bbG(k')^\circ)_{\theta_0}} M_{k'} \;.$$

\vskip8pt

Indeed, $(\rho_g)_*$ is exact and so choosing a finite presentation of $M_{k'}$ as $\cD^{\rm an}(\bbG(k')^\circ)_{\theta_0}$-module
reduces to the case of $M_{k'}=\cD^{\rm an}(\bbG(k')^\circ)_{\theta_0}$ which is trivially true. This means that the above definition extends to a map

$$\phi_g: \sD^\dagger_{\frX.g,k}\otimes_{\cD^{\rm an}(\bbG(k)^\circ)_{\theta_0}} M_{k} \lra (\rho_g)_* \left(\sD^\dagger_{\frX,k}\otimes_{\cD^{\rm an}(\bbG(k)^\circ)_{\theta_0}} M_{k}\right) \;.$$

\vskip8pt

By construction, it satisfies the requirements ${\rm (i)}, {\rm (ii)}$ and ${\rm (iii)}$.
We next verify condition ${\rm (b)}$. So suppose that $\frX',\frX$ are $\GO$-equivariant and we have $(\frX',k')\succeq (\frX,k)$ with canonical morphism $\pi: \frX' \ra \frX$ over $\frX_0$. One then has an isomorphism

$$\pi_*\left(\Loc^\dagger_{\frX',k'}(M_{k'})\right)\car
\left(\pi_*\sD^\dagger_{\frX',k'}\right) \otimes_{\cD^{\rm an}(\bbG(k')^\circ)_{\theta_0}} M_{k'} \;.$$

\vskip8pt

Indeed, $\pi_*$ is exact by \ref{prop-exactdirectimage} and we may argue as for $(\rho_g)_*$. Furthermore, $\bbG(k')^\circ\subseteq \bbG(k)^\circ$ and we denote by $\tilde{\psi}_{\frX',\frX}: M_{k'}\ra M_{k}$ the map dual to the natural inclusion $V_{\bbG(k)^\circ-{\rm an}}\subseteq V_{\bbG(k')^\circ-{\rm an}}$. Let $U \sub \frX$ be an open subset and $P \in \pi_*\sD^\dagger_{\frX',k'}(U)$, $m \in M_{k'}$. We then define

$$\psi_{\frX', \frX}(P\otimes m):=\Psi_{\frX', \frX}(P)\otimes \tilde{\psi}_{\frX',\frX}(m)$$

\vskip8pt

where $\Psi_{\frX', \frX}$ denotes the canonical morphism $\pi_*\sD^\dagger_{\frX',k'} \ra \sD^\dagger_{\frX,k}$. This definition extends to a map

$$\psi_{\frX', \frX}: \pi_* \left(\Loc^\dagger_{\frX',k'}(M_{k'})\right) \ra  \Loc^\dagger_{\frX,k}(M_{k})$$

\vskip8pt

according to our above description of $\pi_* \left(\Loc^\dagger_{\frX',k'}(M_{k'})\right)$. The map $\psi_{\frX', \frX}$ satisfies condition \ref{compatible(a)(b)1} and the required transitivity properties. It remains to see that the corresponding map $\overline{\psi}_{\frX',\frX}$ is an isomorphism, as required in \ref{isocondition1}. But $\overline{\psi}_{\frX',\frX}$ corresponds under the isomorphism of lem. \ref{lem-aux} to the linear extension

$$D(\bbG(k)^\circ,\GO)\otimes_{D(\bbG(k'),\GO)} M_{k'}\ra M_k$$

\vskip8pt

of $\tilde{\psi}_{\frX',\frX}$ via functoriality of $\Loc^\dagger_{\frX,k}$. But the linear extension of $\tilde{\psi}_{\frX',\frX}$ is an isomorphism by part (i) of lem. \ref{lem-refine} and hence, so is $\overline{\psi}_{\frX',\frX}$. This shows $\Loc^{\GO}(M)\in\sC^{\GO}_{\frX_0}$. Given a morphism $M\ra N$ of coadmissible $\DGOt$-modules, one obtains maps $M_k\ra N_k$ for any $k$ which are compatible with the maps $\tilde{\phi}_g$ and $\tilde{\psi}_{\frX',\frX}$. By functoriality of $\Loc^\dagger_{\frX,k}$, they give rise to linear morphisms

$$\Loc^\dagger_{\frX,k}(M_{k})\lra \Loc^\dagger_{\frX,k}(N_{k})$$

\vskip8pt

which are compatible with the maps $\phi_g$ and $\psi_{\frX',\frX}$. In other words, $\Loc^{\GO}(M)$ is functorial in $M$. \end{proof}

\vskip8pt

{\it Assertion 2: $\Gamma(\sM)$ is a coadmissible $D(G_0,L)_{\theta_0}$-module.}

\vskip8pt

\begin{proof} For given $k$ we choose a $(\frX,k)\in\cF_{\frX_0}$ and let $N_k:= H^0(\frX,\sM_{\frX,k})$. By \ref{isocondition1} together with
lem. \ref{lem-aux}, we then have linear isomorphisms $$D(\bbG(k)^\circ,\GO)\otimes_{D(\bbG(k'),\GO)} N_{k'}\simeq N_k$$ whenever $k'\geq k$.
Thus, the modules $N_k$ form a $(\DgkO)_{k\in\bbN}$-sequence, in the sense of \cite[1.2.8]{EmertonA} and their projective limit is therefore a coadmissible module. \end{proof}

\vskip8pt

{\it Assertion 3: $\Gamma\circ\Loc^{\GO}(M)\simeq M$.}

\vskip8pt

\begin{proof} Let $V=M'_b$. We have compatible isomorphisms $H^0(\frX,\Loc^{\GO}(M)_{\frX,k})\simeq  (V_{\bbG(k)^\circ-{\rm an}})'$ for all $(\frX,k)$ by \ref{thm-equivalence} and the coadmissible modules $\Gamma\circ\Loc^{\GO}(M)$ and $M$ have therefore isomorphic $(\DgkO)_{k\in\bbN}$-sequences.\end{proof}

\vskip8pt

{\it Assertion 4: $\Loc^{G}\circ\Gamma(\sM) \simeq \sM$.}

\vskip8pt

\begin{proof} Let $N:=\Gamma(\sM)$ and $V=N'_b$ the corresponding admissible representation. Let $\sN=\Loc^{\GO}(N)$. According to part (ii) in lem. \ref{lem-refine}
setting $N_k=D(\bbG(k)^\circ,G_0)\otimes_{D(G_0,L)} N$ produces a $(\DgkO)_{k\in\bbN}$-sequence for the coadmissible module $N$ which is isomorphic to its constituting sequence $H^0(\frX,\sM_{\frX,k})$ from Assertion 2. Now let $(\frX,k)\in\cF_{\frX_0}$. By what we just said we have linear isomorphisms

$$\sN_{\frX,k}=\Loc_{\frX,k}^\dagger(N_k)\simeq \Loc_{\frX,k}^\dagger( H^0(\frX,\sM_{\frX,k}) ) \simeq \sM_{\frX,k} \;,$$

\vskip8pt

where the final isomorphism comes from \ref{thm-equivalence}. Via this isomorphism, the action map $\phi_g^{\sN_{\frX,k}}$, constructed for $\sN=\Loc^{\GO}(N)$ along the lines of Assertion 1, corresponds to $\phi_g^{\sM_{\frX,k}}$, as follows directly from the ${\rm Ad}(g)$-linearity of these two maps. Similarly, if $(\frX',k')\succeq (\frX,k)$ for $G_0$-equivariant $\frX',\frX$, then the transition map $\psi^{\sN}_{\frX',\frX}$, constructed for $\sN=\Loc^{\GO}(N)$ along the lines of Assertion 1, corresponds to $\psi^{\sM}_{\frX',\frX}$, as follows directly from the $\Psi_{\frX',\frX}$-linearity of these two maps. Hence, $\sN\simeq \sM$ in $\sC^{\GO}_{\frX_0}$. \end{proof}

This finishes the proof of the theorem. \end{proof}

\vskip8pt

\begin{para}\label{para-sheaf}
We indicate how coadmissible $\GO$-equivariant $\sD$-modules can be 'realized' as honest (equivariant) sheaves on the topological space
$\frX_\infty=\varprojlim_{\cF_{\frX_0}} \frX$, cf. \ref{intro-projlim}. The induced $G_0$-action on $\frX_\infty$ is denoted by $\rho_g:\frX_\infty\rightarrow\frX_\infty$ for $g\in G_0$. We denote the canonical projection map $\frX_\infty \ra \frX$ by ${\rm sp}_{\frX}$ for each $\frX$ and define the following sheaf of rings on $\frX_\infty$.
Assume $V\sub\frX_\infty$ is an open subset of the form ${\rm sp}_{\frX}^{-1}(U)$ with an open subset $U\sub\frX$ for a model $\frX\in\cF_{\frX_0}$. We have that

$${\rm sp}_{\frX'}(V)=\pi^{-1}(U)$$

\vskip8pt

for any morphism $\pi:\frX' \ra \frX$ over $\frX_0$ and so, in particular, ${\rm sp}_{\frX'}(V)\sub\frX'$ is an open subset for such $\frX'$.
Moreover,

$$\pi^{-1}({\rm sp}_{\frX'}(V))={\rm sp}_{\frX''}(V)$$

\vskip8pt

whenever $\pi:\frX'' \ra \frX'$ is a morphism over $\frX$. In this situation, the morphism (\ref{equ-transit_sheaf}) induces the ring homomorphism

\begin{numequation}\label{equ-transit_hom} \sD^\dagger_{\frX'',k''}({\rm sp}_{\frX''}(V))=\pi_*\sD^\dagger_{\frX'',k''}(({\rm sp}_{\frX'}(V)) \ra
\sD^\dagger_{\frX',k'}({\rm sp}_{\frX'}(V))
\end{numequation}

and we form the projective limit

$$\sD_{\infty}(V):=\varprojlim_{\frX' \ra \frX} \sD^\dagger_{\frX',k'}({\rm sp}_{\frX'}(V))$$

\vskip8pt

over all these maps. The open subsets of the form $V$ form a
basis for the topology on $\frX_\infty$ and $\sD_{\infty}$ is a presheaf on this basis. We denote the associated sheaf
on $\frX_\infty$ by the symbol $\sD_{\infty}$ as well. It is a $\GO$-equivariant sheaf of rings on $\frX_\infty$ in the usual sense: given $g\in\GO$, the actions ${\rm Ad}(g)$ on each individual sheaf $\sD^\dagger_{\frX,k}$, cf. (\ref{equ-ringiso0}), assemble to a left action

\begin{numequation}\label{equ-ringisoinfty}
{\rm Ad}(g): \sD_{\infty}\car (\rho_g)_*\sD_{\infty}
\end{numequation}

on $\sD_{\infty}$.
\end{para}

\begin{para}
Suppose $\sM:=(\sM_{\frX,k})$ is an object of $\sC^{\GO}_{\frX_0}$. We have the transition maps $\psi_{\frX',\frX}: \pi_*\sM_{\frX',k'} \ra \sM_{\frX,k}$ which are linear relative to the morphism (\ref{equ-transit_sheaf}). In a completely analogous manner as above, we obtain a sheaf $\sM_\infty$ on $\frX_\infty$ together with a family $(\phi_g)_{g \in \GO}$ of isomorphisms

\begin{numequation}\label{equ-modulisoinfty}
\phi_g:\sM_\infty\lra (\rho_g)_*\sM_{\infty}
\end{numequation}

of sheaves of $L$-vector spaces, satisfying the following conditions:

\vskip8pt

\begin{enumerate}
\item For all $g,h \in \GO$ we have $(\rho_g)_*(\phi_h) \circ \phi_g =\phi_{hg}$.

\vskip5pt

\item For all open subsets $U \sub \frX_\infty$, all $P \in \sD_{\infty}(U)$, and all $m \in \sM_\infty(U)$ one has $\phi_g(P.m) = \Ad(g)(P).\phi_g(m)$.
\end{enumerate}

\vskip8pt

In particular, $\sM_\infty$ is an equivariant $\sD_{\infty}$-module on the topological $G_0$-space $\frX_\infty$ in the usual sense. The formation of $\sM_\infty$ is functorial in $\sM \in \sC^{\GO}_{\frX_0}.$
\end{para}

\begin{prop}\label{faithful}
The functor $\sM \rightsquigarrow \sM_\infty$ from the category $\sC^{\GO}_{\frX_0}$ to $\GO$-equivariant $\sD_{\infty}$-modules is a faithful functor.
\end{prop}

\begin{proof} We have ${\rm sp}_{\frX}(\frX_\infty)=\frX$ for all $\frX$.
The global sections of $M_\infty$ are therefore equal to

$$H^0(\frX_\infty,\sM_\infty)=\varprojlim_{(\frX,k) \in \underline{\cF}_{\frX_0}}H^0(\frX,\sM_{\frX,k})=\Gamma(\sM)$$

\vskip8pt

where we have used prop. \ref{prop-cofinal}. Now let $f, h$ be two morphisms $\sM\ra \sN$ in $\sC^{\GO}_{\frX_0}$ such that $f_\infty=h_\infty$. By the equivalence of categories in \ref{prop-equivalenceII}, it suffices to verify $\Gamma(f)=\Gamma(h)$ (as maps between sets, say). But this is clear since $H^0(\frX_\infty,f_\infty)=H^0(\frX_\infty,h_\infty)$. \end{proof}

\vskip8pt

We denote by $\Loc^{\GO}_\infty$ the composite of the functor $\Loc^{\GO}$ with $(\cdot)_\infty$, i.e.

$$\{ ~coadmissible~ \DGO_{\theta_0}-modules~\} \xrightarrow{\Loc^{\GO}_\infty} \{ ~\GO-equivariant~ \sD_{\infty}-modules~ \} \;.$$

\vskip8pt

Since $\Loc^{\GO}$ is an equivalence, the preceding proposition implies that $\Loc^{\GO}_\infty$ is a faithful functor.

\vskip8pt

\subsection{\texorpdfstring{$G$}{}-equivariance and the functor \texorpdfstring{$\Loc^{G}$}{}}\label{G_equivariance} Let $G:=\bbG(L)$. Denote by $\cB$ the (semi-simple) Bruhat-Tits building of the $p$-adic group $G$ together with its natural $G$-action. In accordance with our convention that the group $G$ acts on the right on the flag variety, we also consider $\cB$ with a {\it right} action: $\cB \times G \ra \cB$, $(x,g) \mapsto xg$. We reserve the letter $v$ for special vertices of $\cB$.

\vskip8pt

The purpose of this subsection is to extend the above results from $\GO$-equivariant objects to objects equivariant for the full group $G$.

\begin{para}\label{G_action_on_F}
To each special vertex $v \in \cB$ Bruhat-Tits theory associates a connected reductive group scheme $\bbG_v$ over $\fro$. The generic fiber of $\bbG_v$ is canonically isomorphic to $\bbG$.  We denote by $X_{v,0}$ the flag scheme of $\bbG_v$. Is is a smooth scheme over $\fro$ whose generic fiber is canonically isomorphic to the flag variety $\bbX$ of $\bbG$. All constructions in sections \ref{models} and \ref{loc_n} are associated with the group scheme $\bbG_0$ with vertex $v_0$, say, but can be done canonically for any other of the reductive group schemes $\bbG_v$. We distinguish the various constructions from each other by adding the corresponding vertex $v$ to them, i.e., we write $X_v$ for an admissible blow-up of the smooth model $X_{v,0}$, $\GOv$ for the group of points $\bbG_v(\fro)$, and $G_{v,k}$ for the group of points $\bbG_v(k)(\fro)$. The same conventions apply when we work with the formal completions, i.e., $\frX_{v,0}$ is the formal completion of $X_{v,0}$, and $\frX_v$ always denotes an admissible formal blow-up of $\frX_{v,0}$. We make the general convention that the blow-up morphism $\frX_v \ra \frX_{v,0}$ is part of the datum of $\frX_v$. That is to say, even if a blow-up $\frX_v$ of $\frX_{v,0}$ also allows for a blow-up morphism to another smooth formal model $\frX_{v',0}$, with $v' \neq v$, we only consider it a blow-up of $\frX_{v,0}$. We denote by $\cF_v := \cF_{\frX_{v,0}}$ the set of all admissible formal blow-ups $\frX_v \ra \frX_{v,0}$ of $\frX_{v,0}$
 and by $\underline{\cF}_v:=\underline{\cF}_{\frX_{v,0}}$ the set of pairs defined analogously to \ref{partial_ordering}. By the convention we just introduced, the sets $\cF_v$ and $\cF_{v'}$ are disjoint if $v$ and $v'$ are two distinct vertices. Let

$$\cF := \coprod_v \cF_v \;,$$

\vskip8pt

where $v$ runs over all special vertices of $\cB$, be the disjoint union of all these models.
We recall that $\frX_\infty$ equals the projective limit of all formal models of $\bbX^\rig$, cf. \ref{intro-projlim}.
The set $\cF$ is partially ordered via
$\frX_{v'} \succeq \frX_v$ if the projection $\pr_{\frX_v}: \frX_\infty\ra \frX_v$ factors through the projection $\pr_{\frX_{v'}}: \frX_\infty\ra \frX_{v'}$. In this case, the resulting morphism $\frX_{v'}\ra\frX_v$ is an admissible formal blow-up of $\frX_v$ \cite[Thm. 8.1.24]{LiuBook}. Finally, by the property recalled at the end of \ref{models_and_formal_models}, the ordered set $(\cF,\succeq)$ is directed in the sense that any two elements have a common upper bound.
\end{para}

\begin{dfn}\label{partial_orderingII} We denote by $\underline{\cF} = \coprod_v \underline{\cF}_v$ the disjoint union of all $\underline{\cF}_v$, where $v$ runs through all special vertices of $\cB$. We define an ordering on this set by declaring $(\frX_{v'},k')\succeq (\frX_{v},k)$ if and only if $\frX_{v'}\succeq\frX_v$ and $\varpi^{k'}\Lie(\bbG_{v'}) \subseteq \varpi^{k}\Lie(\bbG_v)$ as lattices in $\Lie(\bbG)$.
\end{dfn}

\begin{para}\label{G_action}
For any special vertex $v \in \cB$, any element $g \in G$ induces a isomorphism $\rho_g^v: X_{v,0} \ra X_{vg,0}$. The morphism induced by $\rho_g^v$ on the generic fibers $X_{v,0} \times \Spec(L) \simeq \bbX \simeq X_{vg,0} \times \Spec(L)$ coincides with the right translation by $g$ on $\bbX$. Moreover, $\rho_g^v$ induces a morphism $\frX_{v,0}\lra \frX_{vg,0}$,which we again denote by $\rho_g^v$ or $\rho_g$, and which coincides with the right translation action on $\frX_{v,0}$ for
$g \in \GOv$ (note that $vg = v$ in this case). Let $\rho_g^\sharp: \cO_{\frX_{vg,0}} \ra (\rho_g)_*\cO_{\frX_{v,0}}$ be the comorphism of $\rho_g$. If $\pi: \frX_v \ra \frX_{v,0}$ is an admissible blow-up of an ideal $\cI\subset \cO_{\frX_{v,0}}$, then blowing-up $(\rho_g^\sharp)^{-1}((\rho_g)_*\cI)$ produces a formal scheme $\frX_{vg}$ (which, for $g\in G_{v,0}$, we denoted by $\frX_{v}.g$ in \ref{translated_model}), together with an isomorphism $\rho_g = \rho_g^v: \frX_{v} \ra \frX_{vg}$. We have again
 $k_{\frX_v}=k_{\frX_{vg}}$ in this situation. For any $g,h
 \in G$ and any admisible formal blow-up $\frX_v$ of $\frX_{v,0}$ we have $\rho^{vg}_h \circ \rho^v_g = \rho^v_{gh}: \frX_v \ra \frX_{vgh}$. This gives a right $G$-action on the family $\cF$ and on the projective limit space $\frX_\infty$.\footnote{The existence of the $G$-action on $\frX_\infty$ can also be deduced from the fact that $\frX_\infty$ is canonically and functorially associated to $\bbX^{\rig}$ whose $G$-action is induced by the $\bbG$-action on $\bbX$.} Finally, if $\frX_{v'}\succeq \frX_v$ with morphism $\pi:\frX_{v'} \ra \frX_v$
 and $g\in G$, then $\frX_{v'g} \succeq \frX_{vg}$ with a resulting morphism $\frX_{v'g} \ra \frX_{vg}$ which we denote by $\pi.g$, as in cor. \ref{cor-actionpi}.

\vskip8pt

On the level of differential operators, we have the following two key properties as before, cf. paragraph \ref{twoproperties}.
Let $g \in G$. The isomorphism $\rho_g: \frX_v \lra \frX_{vg}$ induces an adjoint action

\begin{numequation}\label{equ-ringiso1}{\rm Ad}(g): \sD^\dagger_{\frX_{vg,k}}\car (\rho_g)_*\sD^\dagger_{\frX_{v},k} \;, \;\; D\mapsto \rho_g^\sharp D (\rho_g^\sharp)^{-1} \;,
\end{numequation}

for $k\geq k_{\frX_v}=k_{\frX_{vg}}$. Secondly, we identify the global sections $\Gamma(\frX_v, \sD^\dagger_{\frX_v,k})$ with $\cD^{\rm an}(\bbG_v(k)^\circ)_{\theta_0}$ and obtain the group homomorphism

\begin{numequation}\label{equ-ringiso2} G_{v,k+1}\lra \Gamma(\frX_v, \sD^\dagger_{\frX_v,k})^\times \;, \;\; g\mapsto \delta_g \;,
\end{numequation}

where $G_{v,k+1}=\bbG_v(k)^\circ(L)$ denotes the group of $L$-rational points.
\end{para}

\begin{prop}\label{equ-transit_sheafII}
Suppose $(\frX_{v'},k')\succeq (\frX_v,k)$ for two pairs $(\frX_{v'},k'), (\frX_{v},k)\in\underline{\cF}$ with morphism $\pi: \frX_{v'}\ra\frX_v$. There exists a canonical morphism of sheaves of rings\footnote{In order to alleviate notation we do not indicate that these maps depend on $(\frX_{v'},k')$ and $(\frX_v,k)$. The source and target of these maps should be clear from the context.}

$$\Psi: \pi_*\sD^\dagger_{\frX_{v'},k'} \ra \sD^\dagger_{\frX_v,k}$$

\vskip8pt

which is $G$-equivariant in the sense that for every $g \in G$ the following diagram is commutative:

\[\xymatrixcolsep{11pc}
\xymatrix{
(\pi.g)_*\sD^\dagger_{\frX_{v'}.g,k'} \ar[r]^{\Psi} \ar[d]^{(\pi.g)_*({\rm Ad}(g))} &  \sD^\dagger_{\frX_v.g,k} \ar[d]^{{\rm Ad}(g)}\\
(\pi.g)_*(\rho^{v'}_g)_*\sD^\dagger_{\frX_{v'},k} = (\rho^v_g)_* \pi_*\sD^\dagger_{\frX_{v'},k} \ar[r]^{(\rho^v_g)_*(\Psi)} & (\rho^v_g)_*\sD^\dagger_{\frX_v,k} \\
}\]

\vskip8pt

\end{prop}

\begin{proof} Let $\pr: \frX_v \ra \frX_{v,0}$ and $\pr': \frX_{v'} \ra \frX_{v',0}$ be the blow-up morphisms, and put $\widetilde{\pr} = \pr \circ \pi$. The following diagram displays these morphisms:

\[\xymatrixcolsep{5pc}
\xymatrix{
\frX_{v'} \ar[d]^{\pr'} \ar[r]^{\pi} \ar[dr]^{\widetilde{\pr}}& \frX_v  \ar[d]^{\pr}\\
\frX_{v',0} & \frX_{v,0}  \\
}\]

\vskip8pt

Fix $m \in \bbN$. We show first the existence of a canonical morphism of sheaves of $\fro$-algebras

\begin{numequation}\label{transit_1} \sD^{(k',m)}_{X_{v'}} \lra \widetilde{\pr}^* \sD^{(k,m)}_{X_{v,0}} \;.
\end{numequation}

\vskip8pt

Here $X_{v'}$, $X_{v',0}$, $X_v$, and $X_{v,0}$ are the schemes of finite type over $\fro$ whose completions are $\frX_{v'}$, $\frX_{v',0}$, $\frX_v$, and $\frX_{v,0}$, respectively, cf. \ref{algebraization}. The morphisms between these schemes of finite type over $\fro$ will be denoted by the same letters, e.g., $\pr: X_v \ra X_{v,0}$. We recall that there this a canonical surjective morphism

$$\xi^{(k',m)}_{X_{v'}}: \cA^{(k',m)}_{X_{v'}} = \cO_{X_{v'}} \otimes_\fro D^{(m)}(\bbG_{v'}(k')) \twoheadrightarrow \sD^{(k',m)}_{X_{v'}} \;,$$

\vskip8pt

cf. \ref{prop-auxiliaryI} of sheaves on $X_{v'}$. On the other hand we apply $\widetilde{\pr}^*$ to the surjection

$$\xi^{(k,m)}_{X_{v,0}}: \cA^{(k,m)}_{X_{v,0}} = \cO_{X_{v,0}} \otimes_\fro D^{(m)}(\bbG_{v}(k)) \twoheadrightarrow \sD^{(k,m)}_{X_{v,0}} \;,$$

\vskip8pt

and obtain a surjection $\cO_{X_{v'}} \otimes_\fro D^{(m)}(\bbG_{v}(k)) \twoheadrightarrow \widetilde{\pr}^* \sD^{(k,m)}_{X_{v,0}}$. Recall that $(\frX_{v'},k') \succeq (\frX_v,k)$ implies that $\vpi^{k'} \Lie(\bbG_{v'})$ is contained in $\vpi^k \Lie(\bbG_v)$. The description of the ring $D^{(m)}(\bbG_{v}(k))$ in \ref{div_power_env_algs} shows that the inclusion $\vpi^{k'} \Lie(\bbG_{v'}) \sub \vpi^k \Lie(\bbG_v)$ gives rise to an injective ring homomorphism $D^{(m)}(\bbG_{v'}(k')) \hra D^{(m)}(\bbG_{v}(k))$. We now claim that the composition

$$\cO_{X_{v'}} \otimes_\fro D^{(m)}(\bbG_{v'}(k')) \hra \cO_{X_{v'}} \otimes_\fro D^{(m)}(\bbG_{v}(k)) \twoheadrightarrow \widetilde{\pr}^*\sD^{(k,m)}_{X_{v,0}}$$

\vskip8pt

factors through $\sD^{(k',m)}_{X_{v'}}$. As all those sheaves are $\vpi$-torsion free, this can be checked after tensoring with $L$ in which case we use that $\sD^{(k',m)}_{X_{v'}} \otimes_\fro L \simeq \widetilde{\pr}^* \sD^{(k,m)}_{X_{v,0}} \otimes_\fro L$ is the (push-forward of the) sheaf of (algebraic) differential operators on the generic fiber of $X_{v'}$. We thus get a canonical morphism of sheaves \ref{transit_1}. Passing to completions induces a canonical morphism $\hsD^{(k',m)}_{\frX_{v'}} \ra \widetilde{\pr}^* \hsD^{(k,m)}_{\frX_{v,0}}$. Taking the inductive limit over all $m$ and inverting $\vpi$ gives a canonical morphism $\sD^\dagger_{\frX_{v'},k'} \ra \widetilde{\pr}^* \sD^\dagger_{\frX_{v,0},k}$. Now we consider the formal scheme $\frX_{v'}$ as a blow-up of $\frX_{v,0}$ via $\widetilde{\pr}$. Then $\pi$ becomes a morphism of formal schemes over $\frX_{v,0}$, and we can consider $\widetilde{\pr}^* \sD^\dagger_{\frX_{v,0},k}$ as the sheaf of arithmetic differential operators with congruence level $k$ defined on $\frX_{v'}$ via $\widetilde{\pr}$, as introduced in \ref{completions}. Using \ref{prop-exactdirectimage} in this setting shows then that $\pi_*\Big(\widetilde{\pr}^* \sD^\dagger_{\frX_{v,0},k}\Big) = \sD^\dagger_{\frX_v,k}$. Then, applying $\pi_*$ to the morphism $\sD^\dagger_{\frX_{v'},k'} \ra \widetilde{\pr}^* \sD^\dagger_{\frX_{v,0},k}$
gives the morphism $\Psi: \pi_*\sD^\dagger_{\frX_{v'},k'} \ra \sD^\dagger_{\frX_v,k}$ of the statement. Making use of the maps $\xi^{(k,m)}_X$, as above, the assertion regarding $G$-equivariance can similarly be reduced to some obvious functorial properties of the rings $D^{(m)}(\bbG_{v}(k))$.
\end{proof}

\vskip8pt

\begin{dfn}\label{dfn-coadmod} A {\it coadmissible $G$-equivariant arithmetic $\sD$-module} on $\cF$ consists of a family $\sM:=(\sM_{\frX,k})_{(\frX,k)\in\underline{\cF}}$ of coherent $\sD^\dagger_{\frX,k}$-modules $\sM_{\frX,k}$ with the following properties:\footnote{From now on we use the notation $\frX_v$ instead of $\frX$ to indicate that the model is an admissible formal blow-up of
$\frX_{v,0}$.}

\vskip5pt

{\rm (a)} For any $v$ and $g\in G$ with isomorphism $\rho^v_g: \frX_v\lra\frX_{vg}$, there exists a isomorphism

$$\phi^v_g: \sM_{\frX_{vg},k}\lra (\rho^v_g)_*\sM_{\frX_v,k}$$

\vskip8pt

of sheaves of $L$-vector spaces, satisfying the following conditions:

\vskip8pt

\begin{enumerate}
\item For all $g,h \in G$ we have $(\rho^v_g)_*(\phi^v_h) \circ \phi^v_g = \phi^v_{hg}$.

\vskip8pt

\item For all open subsets $U \sub \frX_{vg}$, all $P \in \sD^\dagger_{\frX_{vg},k}(U)$, and all $m \in \sM_{\frX_{vg},k}(U)$ one has $\phi^v_g(P.m) = \Ad(g)(P).\phi^v_g(m)$.

\vskip8pt

\item\footnote{To make sense of this condition, we use that elements $g \in G_{k+1,v}$ act trivially on the topological space underlying $\frX_v$, cf. \ref{Gk_acts_trivially}.} For all $g \in G_{k+1,v}$ the map $\phi^v_g: \sM_{\frX_v,k} \ra (\rho^v_g)_* \sM_{\frX_v,k}  = \sM_{\frX_v,k}$ is equal to the multiplication by $\delta_g \in H^0(\frX_v, \sD^\dagger_{\frX_v,k})$.
\end{enumerate}

\vskip5pt

{\rm (b)} For any two pairs $(\frX_{v'},k') \succeq (\frX_v,k)$ in $\underline{\cF}$ with morphism $\pi:\frX_{v'}\ra\frX_v$ there is a transition morphism $\psi_{\frX_{v'}, \frX_v}: \pi_*\sM_{\frX_{v'}} \ra \sM_{\frX_v}$, linear relative to the canonical morphism $\Psi: \pi_*\sD^\dagger_{\frX_{v'},k'} \ra \sD^\dagger_{\frX_v,k}$ (\ref{equ-transit_sheafII}) and satisfying
\begin{numequation}\label{compatible(a)(b)} \phi_g^v \circ \psi_{\frX_{v'g}, \frX_{vg}}=(\rho_g^v)_*(\psi_{\frX_{v'}, \frX_v})\circ (\pi.g)_*(\phi^{v'}_g)
\end{numequation}
for any $g\in G$. If $v'=v$, and $(\frX',k') \succeq (\frX,k)$ in $\underline{\cF}_v$, and if $\frX,\frX'$ are $G_{v,0}$-equivariant, then we require additionally that the morphism induced by $\psi_{\frX', \frX}$, cf \ref{factormap},

\begin{numequation}\label{isocondition2}
\overline{\psi}_{\frX', \frX}: \sD^\dagger_{\frX,k} \otimes_{\pi_*\sD^\dagger_{\frX',k'},G_{k+1}}  \pi_*\sM_{\frX',k'} \car \sM_{\frX,k}
\end{numequation}

is an isomorphism of $\sD^\dagger_{\frX,k}$-modules. In general, the morphisms $\psi_{\frX_{v'}, \frX_v}: \pi_*\sM_{\frX_{v'},k'} \ra \sM_{\frX_v,k}$ are required to satisfy the transitivity condition $\psi_{\frX_{v'}, \frX_v} \circ \pi_*(\psi_{\frX_{v''},\frX_{v'}}) = \psi_{\frX_{v''},\frX_v}$, whenever $(\frX_{v''},k'')\succeq (\frX_{v'},k')\succeq (\frX_v,k)$ in $\underline{\cF}$. Moreover, $\psi_{\frX_v, \frX_v}:={\rm id}_{\sM_{\frX_v,k}}$.

\vskip5pt

A {\it morphism} $\sM \ra \sN$ between two coadmissible $G$-equivariant arithmetic $\sD$-modules consists of morphisms $\sM_{\frX,k} \ra \sN_{\frX,k}$ of  $\sD^\dagger_{\frX,k}$-modules which are compatible with the extra structure. We denote the resulting category by $\sC^G_\cF$.
\end{dfn}

\begin{para} We now make the link to the category of coadmissible $D(G,L)_{\theta_0}$-modules, cf. \ref{module}. Let $M$ be such a module and let $V:=M'_b$. Fix a special vertex $v$. Let $V_{\bbG_v(k)^\circ-{\rm an}}$ be the subspace of $\bbG_v(k)^\circ$-analytic vectors and let $M_{v,k}$ be its continuous dual. For any $(\frX_v,k)\in\underline{\cF}$ we have the coherent $\sD^\dagger_{\frX_v,k}$-module

$$\Loc^\dagger_{\frX_v,k} (M_{v,k})=\sD^\dagger_{\frX_v,k}\otimes_{\cD^{\rm an}(\bbG_v(k)^\circ)_{\theta_0}} M_{v,k} \;,$$

\vskip8pt

according to thm. \ref{thm-equivalence}. On the other hand, given an object $\sM\in \sC^G_\cF$, we may consider the projective limit

$$\Gamma(\sM):=\varprojlim_{(\frX,k) \in \underline{\cF}} H^0(\frX,\sM_{\frX,k})$$

\vskip8pt

with respect to the transition maps $\psi_{\frX', \frX}$. Here, the projective limit is taken in sense of abelian groups and over the cofinal family
of pairs $(\frX_v,k)\in\underline{\cF}$ with $G_{v,0}$-equivariant $\frX_v$.

\end{para}

\begin{thm}\label{thm_G_equiv}
(i) The family

$$\Loc^{G}(M):=(\Loc^\dagger_{\frX_v,k} (M_{v,k}))_{(\frX_v,k) \in \underline{\cF}}$$

\vskip8pt

forms a coadmissible $G$-equivariant arithmetic $\sD$-module on $\cF$, i.e., gives an object of $\sC^{G}_{\cF}$. The formation of $\Loc^{G}(M)$ is functorial in $M$.

\vskip8pt

(ii) The functors $\Loc^{G}$ and $\Gamma(\cdot)$ induce quasi-inverse equivalences between the category of coadmissible $D(G,L)_{\theta_0}$-modules and $\sC^G_\cF$.
\end{thm}

\begin{proof} The proof is an extension, taking into account the additional $G$-action, of the proof for the compact subgroup $\GO$ treated in the preceding subsection, cf.\ref{prop-equivalenceII}. Let $M$ be a coadmissible $D(G,L)_{\theta_0}$-module and let $\sM\in\sC^G_\cF$. The theorem follows from the four following assertions.

\vskip8pt

{\it Assertion 1: One has $\Loc^{G}(M)\in\sC^G_\cF$ and
$\Loc^{G}(M)$ is functorial in $M$.}

\vskip8pt

\begin{proof} For condition ${\rm (a)}$ for $\Loc^{G}(M)$ we need the maps

$$\phi^v_g: \Loc^G(M)_{\frX_{vg},k}\lra  (\rho^v_g)_*\Loc^G(M)_{\frX_{v},k}$$

\vskip8pt

satisfying the requirements ${\rm (i)}, {\rm (ii)}$ and ${\rm (iii)}$. Let $\tilde{\phi}^v_g: M_{vg,k} \ra M_{v,k}$ denote the map dual to the map $V_{\bbG_v(k)^\circ-{\rm an}} \lra V_{\bbG_{vg}(k)^\circ-{\rm an}}$ given by $w\mapsto g^{-1}w$ (note that $\bbG_{vg}(k)^\circ = g^{-1} \bbG_{v}(k)^\circ g$ in $\bbG^{\rig}$). Let $U \sub \frX_{vg}$ be an open subset and $P \in \sD^\dagger_{\frX_{vg},k}(U)$, $m\in M_{vg,k}$.
We define

\begin{numequation}\label{equ-ver} \phi^v_g (P\otimes m) :=  \Ad(g)(P)\otimes \tilde{\phi}^v_g(m) \;.
\end{numequation}

This definition extends to a map

$$\phi^v_g: \sD^\dagger_{\frX_{vg},k}\otimes_{\cD^{\rm an}(\bbG_{vg}(k)^\circ)_{\theta_0}} M_{vg,k} \lra (\rho^v_g)_* (\sD^\dagger_{\frX_{v},k}\otimes_{\cD^{\rm an}(\bbG_{v}(k)^\circ)_{\theta_0}} M_{v,k})$$

\vskip8pt

which satisfies the requirements ${\rm (i)}, {\rm (ii)}$ and ${\rm (iii)}$. We next verify condition ${\rm (b)}$. Given $(\frX_{v'},k') \succeq (\frX_v,k)$  in $\underline{\cF}$, we have $\bbG_{v'}(k')^\circ\subseteq \bbG_{v}(k)^\circ$
in $\bbG^{\rig}$ and we denote by $\tilde{\psi}_{\frX_{v'},\frX_v}: M_{v',k'}\ra M_{v,k}$ the map dual to the natural inclusion $V_{\bbG_{v}(k)^\circ-{\rm an}}\subseteq V_{\bbG_{v'}(k')^\circ-{\rm an}}$. Let $U \sub \frX_{v}$ be an open subset and $P \in \pi_*\sD^\dagger_{\frX_{v'},k'}(U)$, $m \in M_{v',k'}$. We then define

\begin{numequation}\label{equ-ver2} \psi_{\frX_{v'}, \frX_v}(P\otimes m):=\Psi_{\frX_{v'}, \frX_v}(P)\otimes \tilde{\psi}_{\frX_{v'},\frX_v}(m) \end{numequation}

where $\Psi_{\frX_{v'}, \frX_v}$ denotes the canonical morphism $\pi_*\sD^\dagger_{\frX_{v'},k'} \ra \sD^\dagger_{\frX_v,k}$
from prop. \ref{equ-transit_sheafII}. This definition extends to a map

$$\psi_{\frX_{v'}, \frX_v}: \pi_* \Loc^G(M)_{\frX_{v'},k'} \ra \Loc^G(M)_{\frX_{v},k}$$

\vskip8pt

which satisfies all required conditions. The functoriality of $\Loc^G$ is verified entirely similar to the case of $\Loc^{\GO}$.
\end{proof}

\vskip8pt

{\it Assertion 2: $\Gamma(\sM)$ is a coadmissible $D(G,L)_{\theta_0}$-module.}

\vskip8pt

\begin{proof} We already know that $\Gamma(\sM)$ is a coadmissible $D(\GOv,L)_{\theta_0}$-module for any $v$, cf. thm. \ref{prop-equivalenceII}. So it suffices to exhibit a compatible $G$-action on $\Gamma(\sM)$. Let $g\in G$. The isomorphism
$$\phi^v_g: \sM_{\frX_{vg},k}\lra (\rho^v_g)_*\sM_{\frX_v,k}$$
is compatible with transition maps according to
\ref{compatible(a)(b)}. We therefore obtain an isomorphism

$$\Gamma(\sM)=\varprojlim_{\underline{\cF}_{vg}} \Gamma(\frX_{vg},\sM_{\frX_{vg},k})\stackrel{g}{\lra} \varprojlim_{\underline{\cF}_{v}} \Gamma(\frX_{v},\sM_{\frX_{v},k})= \Gamma(\sM) \;.$$

\vskip8pt

According to {\rm (i)}, {\rm (ii)} and {\rm (iii)} in \ref{dfn-coadmod}, this gives indeed a $G$-action on $\Gamma(\sM)$ which is compatible with its various $D(\GOv,L)$-module structures. \end{proof}

\vskip8pt

{\it Assertion 3: $\Gamma\circ\Loc^{G}(M)\simeq M$.}

\vskip8pt

\begin{proof} We already know that this hold as coadmissible $D(G_0,L)_{\theta_0}$-modules, cf. thm. \ref{prop-equivalenceII}, so it suffices to identify the $G$-action on both sides. Let $v$ be a special vertex. According to \ref{equ-ver}, the action

$$\Gamma\circ\Loc^{G}(M)\simeq \varprojlim_k M_{vg,k}\ra \varprojlim_k M_{v,k}\simeq \Gamma\circ\Loc^{G}(M)$$

\vskip8pt

of an element $g\in G$ on $\Gamma\circ\Loc^{G}(M)$ is induced by $\tilde{\phi}^v_g: M_{vg,k}\ra M_{v,k}.$ The identification $M\simeq \varprojlim_k M_{vg,k} \simeq \varprojlim_k M_{v,k}$ (coming from dualizing $V=\cup_k V_{\bbG_{vg}(k)^\circ-{\rm an}} = \cup_k V_{\bbG_v(k)^\circ-{\rm an}}$)  therefore gives back the original action of $g$ on $M$. \end{proof}

\vskip8pt

{\it Assertion 4: $\Loc^{G}\circ\Gamma(\sM) \simeq \sM$.}

\vskip8pt

\begin{proof} We know that $\Loc^{G}(\Gamma(\sM))_{\frX_v,k}\simeq\sM_{\frX_v,k}$ as $\sD^\dagger_{\frX_v,k}$-modules for any $(\frX_v,k)\in\underline{\cF}$, cf. \ref{thm-equivalence}. It now remains to check that these isomorphisms are compatible with the maps $\phi_g^v$ and $\psi_{\frX_{v'},\frX_v}$ on both sides.
This works as in the $\GO$-case, but let us spell out the argument for the maps
$\phi_g^v$ in detail. The maps
$\phi_g^v$ on the left-hand side are induced by the maps on the right-hand side as follows. Given

$$\phi^v_g: \sM_{\frX_{vg},k} \lra   (\rho^v_g)_*\sM_{\frX_{v},k} \;,$$

\vskip8pt

the corresponding map

$$ \phi^v_g: \Loc^{G}(\Gamma(\sM))_{\frX_{vg},k} \lra (\rho^v_g)_* (\Loc^{G}(\Gamma(\sM))_{\frX_{v},k})$$

\vskip8pt

equals the map

$$\sD^\dagger_{\frX_{vg},k}\otimes_{\cD^{\rm an}(\bbG_{vg}(k)^\circ)_{\theta_0}} H^0(\frX_{vg},\sM_{\frX_{vg},k})
 \lra (\rho^v_g)_* (\sD^\dagger_{\frX_v,k}\otimes_{\cD^{\rm an}(\bbG_{v}(k)^\circ)_{\theta_0}} H^0(\frX_{v},\sM_{\frX_v,k}))$$

\vskip8pt

given locally by $\Ad(g)(\cdot)\otimes H^0(\frX_{vg},\phi^v_g)$, cf. \ref{equ-ver}. Let $U \sub \frX_{v}$ be an open subset and $P \in \sD^\dagger_{\frX_{v},k}(U)$, $m\in M_{v,k} = H^0(\frX_{vg},\sM_{\frX_{v},k}) $.
The isomorphisms $\Loc^{G}(\Gamma(\sM))_{\frX_{v},k} \simeq\sM_{\frX_v,k}$ are induced (locally) by $P\otimes m \mapsto P.(m|_U)$. Using condition ${\rm (ii)}$ in \ref{dfn-coadmod}, one then sees that these isomorphisms interchange the maps $\phi^v_g$, as desired.
The compatibility with transition maps $\psi_{\frX_{v'}, \frX_v}$ for two models $(\frX_{v'},k') \succeq (\frX_v,k)$ in $\underline{\cF}$ is deduced in an entirely similar manner from \ref{equ-ver2} and the fact that $\psi_{\frX_{v'}, \frX_v}$ is linear relative to the canonical morphism $\Psi: \pi_*\sD^\dagger_{\frX_{v'},k'} \ra \sD^\dagger_{\frX_v,k}$. \end{proof}

This finishes the proof of the theorem. \end{proof}

\vskip8pt

As in the case of the group $\GO$, we now indicate how objects from
$\sC^{G}_{\cF}$ can be 'realized' as honest $G$-equivariant sheaves on the $G$-space $\frX_\infty$. Recall that we have the $\GO$-equivariant sheaf $\sD_{\infty}$ on $\frX_\infty$, cf. \ref{para-sheaf}.
\begin{prop}
The $\GO$-equivariant structure on the sheaf $\sD_{\infty}$ extends to a $G$-equivariant structure.
\end{prop}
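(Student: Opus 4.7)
The plan is to exploit functoriality of the entire construction of $\tsD^\dagger_{\frX,k,\bbQ}$ with respect to $G$-translations between the different smooth models $X_0(v)$ attached to special vertices of the building. Since the construction of $\tcD^{(m)}_{X_0(v),k}$ uses only the relative tangent sheaf $\cT_{X_0(v)}$ and the parameter $\varpi^k$, and the construction of $\tcD^{(m)}_{X(v),k}={\rm pr}^*\tcD^{(m)}_{X_0(v),k}$ then depends only on the blown-up ideal, every isomorphism of pairs $(X_0(v),\cI_v)\to (X_0(v'),\cI_{v'})$ induces an isomorphism of the associated sheaves. The whole passage to $p$-adic completion, tensoring with $\bbQ$, and taking a filtered colimit over $m$ inherits this functoriality.

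First I would record that for $g\in G$ and any $\frX(v)\in\cF(v)$, the translation $X_0(v)\xrightarrow{g\cdot}X_0(gv)$ lifts (by the universal property of blowing up applied to the transported ideal $g\cdot\cI$) to an isomorphism $\frX(v)\xrightarrow{g\cdot}\frX(gv)$, and, by the functoriality just mentioned together with the equality $k_{\frX(v)}=k_{\frX(gv)}$ built into the setup, to a canonical isomorphism of sheaves of rings
\[
g\cdot:\;\tsD^\dagger_{\frX(v),k,\bbQ}\;\stackrel{\simeq}{\longrightarrow}\;(g\cdot)^{*}\tsD^\dagger_{\frX(gv),k,\bbQ}.
\]
These are compatible with every morphism $\frX'(v)\to\frX(v)$ within the chosen systems, because any such morphism is transported by $g$ to a morphism $\frX'(gv)\to\frX(gv)$ and both sheaf maps arise from the same pullback construction already used in \ref{equ-transit_hom}. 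Using the identity \ref{equivariant}, ${\rm sp}_{\frX(gv)}(gV)=g\cdot {\rm sp}_{\frX(v)}(V)$, passage to the projective limit on local sections of basic opens $V\subseteq\frX_\infty$ produces the desired isomorphism
\[
g\cdot:\;\tsD^\dagger_{\infty,\bbQ}(gV)\;\stackrel{\simeq}{\longrightarrow}\;\tsD^\dagger_{\infty,\bbQ}(V).
\]
Finally I would verify the cocycle identity $(gh)\cdot=g\cdot\circ h\cdot$, which is immediate since the maps all come from pullback under morphisms of formal schemes and $(gh)\cdot=(g\cdot)\circ(h\cdot)$ already holds at the level of the $\frX(v)$ and of the ideals involved. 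For $g\in G_0$ one has $gv_0=v_0$ and $g$ acts by automorphisms on each $\frX\in\cF(v_0)$, so $g\cdot$ reduces to the $G_0$-equivariant structure constructed in \ref{para-sheaf}.

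The main obstacle is purely one of bookkeeping: the sheaf $\tsD^\dagger_{\infty,\bbQ}$ is defined as a projective limit over the distinguished family $\cF(v_0)$, while $g\in G$ generally sends $\cF(v_0)$ into $\cF(gv_0)$, a different subsystem. To show the action is well-defined on the limit one has to replace $\cF(v_0)$ by a cofinal subsystem of itself that is sent by $g$ into $\cF(v_0)$; this is possible thanks to the assumption that $\cF(v_0)$ is stable under admissible blowing up and dominates every $\cF(v)$, so common refinements inside $\cF(v_0)$ always exist. Once this cofinality argument is in place, everything else follows formally from functoriality.
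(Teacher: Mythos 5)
Your overall strategy coincides with the paper's: the translation isomorphisms $\frX(v)\xrightarrow{g.}\frX(gv)$ induce ring isomorphisms between sections of $\tsD^\dagger_{\frX(v),k,\Q}$ and $\tsD^\dagger_{\frX(gv),k,\Q}$ (using the hypothesis $k_{\frX(v)}=k_{\frX(gv)}$), these are compatible with the transition homomorphisms, and one passes to the projective limit on basic opens $V={\rm sp}_{\frX(v)}^{-1}(U)$ via the identity \ref{equivariant}. Your remarks on the cocycle identity and on recovering the $G_0$-structure for $g\in G_0$ are fine (the direction of your map, $\tsD^\dagger_{\infty,\Q}(gV)\to\tsD^\dagger_{\infty,\Q}(V)$ versus the paper's $\tsD^\dagger_{\infty,\Q}(V)\to\tsD^\dagger_{\infty,\Q}(gV)$, is only a matter of convention).

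The one step that does not work as you state it is the cofinality device. You propose to ``replace $\cF(v_0)$ by a cofinal subsystem of itself that is sent by $g$ into $\cF(v_0)$.'' But $g$ carries an admissible formal blow-up of $\frX_0(v_0)$ to an admissible formal blow-up of $\frX_0(gv_0)$, a different smooth model; such a model need not be a blow-up of $\frX_0(v_0)$ at all, so it has no reason to lie in $\cF(v_0)$, and the existence of common refinements inside $\cF(v_0)$ does not help: a refinement $\frY\in\cF(v_0)$ of $\frX$ and of $g.\frX$ is again sent by $g$ outside $\cF(v_0)$. The paper resolves this by going in the opposite direction. It enlarges the index category to the directed family of \emph{all} models $\frX(v)$, over all special vertices $v$, lying over a fixed $\frX(\tilde v)$, with cross-vertex transition maps \ref{equ-transit_homII} supplied by \ref{prop-exactdirectimage}; this enlarged system is permuted by $G$ thanks to the assumed $G$-stability of the union of the $\cF(v)$, and $\cF(v_0)$ is cofinal in it because every $\frX(v)$ is dominated by an element of $\cF(v_0)$. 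Hence the projective limit over the enlarged system still computes $\tsD^\dagger_{\infty,\Q}(V)$ and visibly carries the $G$-action. With this substitution for your cofinality step, the rest of your argument goes through as written.
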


\begin{proof} This can be shown very similar to \cite[Proof of Prop. 5.4.5]{PSS4}. \end{proof}

\vskip8pt

Recall the faithful functor $\sM \rightsquigarrow \sM_\infty$ from coadmissible $\GO$-equivariant arithmetic $\sD$-modules on $\cF_{\frX_0}$ to $\GO$-equivariant $\sD_{\infty}$-modules on $\frX_\infty$, cf. \ref{faithful}. If $\sM$ comes from a coadmissible $G$-equivariant $\sD$-module on $\cF$, then $\sM_\infty$ is in fact $G$-equivariant. This gives the

\begin{prop}\label{faithful2}
The functor $\sM \rightsquigarrow \sM_\infty$ induces a faithful functor from $\sC^G_{\cF}$ to $G$-equivariant $\sD_{\infty}$-modules on $\frX_\infty$.
\end{prop}

\vskip8pt

\begin{rem} We explain briefly how our equivariant constructions on the flag variety relate to the (nonequivariant) theory of $\wideparen{\mathcal{D}}$-modules on smooth rigid-analytic spaces developed by Ardakov-Wadsley \cite{AWDcapI}. First of all, there is a nonequivariant version $\sC^{G=\{1\}}_\cF$ of the category  $\sC^{G}_\cF$ which can be construced by ignoring the $G$-action in the definition of $\sC^{G}_\cF$. That is to say, by deleting the condition ${\rm (a)}$ and by replacing \ref{isocondition2} of ${\rm (b)}$ by

$$\overline{\psi}_{\frX', \frX}: \sD^\dagger_{\frX,k} \otimes_{\pi_*\sD^\dagger_{\frX',k'}}  \pi_*\sM_{\frX'} \car \sM_\frX$$

\vskip8pt

in \ref{dfn-coadmod}. We then have a functor $\sM \rightsquigarrow \sM_\infty$ from $\sC^{G=\{1\}}_\cF$ to $\sD_{\infty}$-modules as in prop. \ref{faithful2}. Now by the equivalence of categories between abelian
sheaves on $\bbX^{\rig}$ and on $\frX_\infty$ \cite[Prop. 9.3.4]{BoschLectures} we may consider our sheaf of infinite order differential operators $\sD_{\infty}$ to be a sheaf on $\bbX^{\rig}$. One can show that this sheaf coincides with the sheaf $\wideparen{\mathcal{D}}_{\bbX^\rig}$ introduced by Ardakov-Wadsley. Given this identification, the functor $\sM \rightsquigarrow \sM_\infty$ induces then an equivalence between $\sC^{G=\{1\}}_\cF$ and Ardakov-Wadsley's category of coadmissible $\wideparen{\mathcal{D}}_{\bbX^\rig}$-modules.
\end{rem}

\begin{rem}\label{coeff_field} Let $L\subset K$ be a complete and discretely valued extension field such that the topology of $K$ induces the topology on $L$. If we consider the $K$-algebras $\DGO \widehat{\otimes}_L K$ and $D(G,L)\widehat{\otimes}_L K$ as well as the sheaf of $K$-algebras $\sD^\dagger_{\frX,k} \widehat{\otimes}_L K$, then one may establish versions 'over $K$' of the preceding theorems in a straightforward manner. Here, we use the completed topological tensor products for the projective tensor product topology on the ordinary tensor product of two locally convex $L$-vector spaces \cite[ch. IV]{NFA}.
\end{rem}

\section{Examples of localizations}\label{examples}
In this section we compute the $G$-equivariant arithmetic $\sD$-modules corresponding to certain classes of admissible locally analytic $G$-representations. The discussion is a generalization of the $GL(2)$-case treated in \cite{PSS4}. We keep the notation developed in the previous section.
For the rest of this section we fix an element $(\frX,k)\in\underline{\cF}_{\frX_0}$ such that $\frX$ is $\GO$-equivariant.

\vskip8pt
Let $\frg$ denote the Lie algebra of $G$ and
let $L\subset K$ be a complete and discretely valued extension field. To simplify notation, we make the convention that,
when dealing with universal enveloping algebras, distribution algebras, differential operators etc. we write $U(\frg)$, $D(\GO)$, $\sD^\dagger_{\frX,k}$ etc. to denote the corresponding objects {\it after base change to $K$}, i.e., what is precisely $U(\frg_K)$, $D(\GO)\hat{\otimes}_L K$, $\sD^\dagger_{\frX,k}\hat{\otimes}_L K$ and so on (compare also final remark in the preceding section).

\subsection{Smooth representations}\label{smooth_reps}
\setcounter{para}{0}

If $V$ is a smooth $G$-representation (i.e. the stabilizer of each vector $v\in V$ is an open subgroup of $G$), then
$V_{\Gkc-an}$ equals the space of fixed vectors $V^{G_{k+1}}$ in $V$ under the action of the compact subgroup $G_{k+1}$. If $V$ is admissible, then this vector space has finite dimension. In this case one finds, since $\frg V=0$, that

\begin{numequation}\label{smooth_loc}
\Loc^\dagger_{\frX,k} ((V^{G_{k+1}})')= \cO_{\frX,\Q}\otimes_K (V^{G_{k+1}})' \;,
\end{numequation}

\vskip8pt

where $\GO$ acts diagonally and $\sD^\dagger_{\frX,k}$ acts through its natural action on $\cO_{\frX,\Q}$.

\vskip8pt

\subsection{Representations attached to certain \texorpdfstring{$U(\frg)$}{}-modules}\label{reps_att_to_Lie_reps}

In this section, we will compute the arithmetic $\sD$-modules for a class of coadmissible $D(G)$-modules $\bM$ related to the pair $(\frg,B)$ where $B=\bbB(L)$. This includes the case of principal series representations which will be discussed separately in the next section.
Let $\frb$ be the Lie algebra of $B$. Let $\bbT\subset \bbB$ be a maximal split torus, put
$T:=\bbT(L)$ and let $\frt$ be the Lie algebra of $T$.

\vskip8pt

The group $G$ and its subgroup $B$ act via the adjoint representation on $U(\frg)$ and we denote by

\begin{numequation}\label{equ-DF} D(\frg,B):=D(B)\otimes_{U(\frb)} U(\frg)\end{numequation}

the corresponding skew-product ring. The skew-multiplication here is induced by

$$(\delta_{b'} \otimes x')\cdot (\delta_{b}\otimes x)=\delta_{b'b}\otimes \delta_{b^{-1}}(x')x$$

\vskip8pt

for $b,b'\in B$ and $x,x'\in U(\frg)$. A module over $D(\frg,B)$ is the same as a module over $\frg$ together with a compatible locally analytic
$B$-action \cite{OrlikStrauchJH}. Replacing $B$ by $B_0=B\cap \GO$, we obtain a skew-product ring $D(\frg,B_0)$ with similar properties.
Given a ${D(\frg,B)}$-module $M$ one has

\begin{numequation}\label{equ-G0}D(G)\otimes_{D(\frg,B)} M= D(\GO)\otimes_{D(\frg,B_0)} M\end{numequation}

as $D(\GO)$-modules \cite[4.2]{ScSt}. We consider the functor

\begin{numequation}\label{OSfunctor}
M \rightsquigarrow \bM := D(G)\otimes_{D(\frg,B)} M
\end{numequation}

from ${D(\frg,B)}$-modules to $D(G)$-modules \cite{OrlikStrauchJH}. If $M$ is finitely generated as $U(\frg)$-module, then $\bM$ is coadmissible by \cite[4.3]{ScSt}. From now on we assume that $M$ is a finitely generated $U(\frg)$-module. We let $V := \bM'_b$ be the locally analytic $G$-representation corresponding to $\bM$ and denote by

\begin{numequation}\label{anvect} \bM_k:=(V_{\Gkc-\rm an})'\end{numequation}

the dual of the subspace of its $\Gkc$-analytic vectors.
According to \cite[5.2.4]{PSS4} the $\DgkO$-module $\bM_k$ is finitely presented and has its canonical topology.

\begin{lemma}
The canonical map

$$\DgkO\otimes_{D(\GO)} \bM\car \bM_k$$

\vskip8pt

induced by dualising the inclusion $V_{\Gkc-\rm an}\sub V$ is an isomorphism.
\end{lemma}

\begin{proof} This can be proved as in \cite[6.2.4]{PSS4}. \end{proof}

\vskip8pt

Recall the congruence subgroup $G_{k+1}=\Gkc(L)$ of $\GO$. Put $B_{k+1}:=G_{k+1}\cap B_0$. The corresponding skew-product ring $D(\frg,B_{k+1})$ is contained in $\Dgk$ according to \ref{equ-finitefree}. Let $C(k)$ be a (finite) system of representatives in $\GO$ containing $1$ for the residue classes in $\GO/G_{k+1}$ modulo the subgroup $B_0/B_{k+1}$. Note that for an element $g\in \GO$ and a $\Dgk$-submodule $N$ of $D(\GO)$, the abelian group $\delta_gN$ is again a $\Dgk$-submodule because of the formula $x \delta_g = \delta_g {\rm Ad}(g^{-1})(x)$  for any $x\in\Dgk$.

\begin{lemma}
The natural map of $(\Dgk,D(\frg,B_0))$-bimodules

$$ \sum: \bigoplus_{g\in C(k)} \delta_g\Big(\Dgk\otimes_{D(\frg,B_{k+1})} D(\frg,B_0)\Big)\car \DgkO$$

\vskip8pt

is an isomorphism.
\end{lemma}

\begin{proof} This can be proved as in \cite[6.2.5]{PSS4}.
\end{proof}

\vskip8pt

The two lemmas allow us to write

$$ \bM_k = \oplus_{g\in C(k)} \delta_g \Big(\Dgk \otimes_{D(\frg,B_{k+1})} M\Big) = \oplus_{g\in C(k)}\delta_gM^{\rm an}_k$$

\vskip8pt

as modules over $\Dgk$. Here

$$M^{\rm an}_k:= \Dgk \otimes_{D(\frg,B_{k+1})} M \;,$$

\vskip8pt

a finitely presented $\Dgk$-module. If $M$ has character $\theta_0$, so has $M^{\rm an}_k$. As explained above, the 'twisted' module $\delta_gM^{\rm an}_k$ can and will be viewed as having the same underlying group as $M^{\rm an}_k$ but with an action of $\Dgk$ pulled-back by the automorphism ${\rm Ad}(g^{-1})$. Since $\bbG$ is connected, the adjoint action of $G$ fixes the center in $U(\frg)$ and so the character of the module $\delta_g M^{\rm an}_k$ (if existing) does not depend on $g$.

 \vskip8pt

If $M$ has character $\theta_0$, then the $\sD^\dagger_{\frX,k}$-module $\Loc^\dagger_{\frX,k}(\delta_gM^{\rm an}_k)$ on $\frX$ can be described as follows. For any $g\in \GO$ let, as before, $(\rho_g)_*$ denote the direct image functor coming from the automorphism $\rho_g$ of $\frX$. If $N$ denotes a (coherent) $\sD^\dagger_{\frX,k}$-module, then $(\rho_g)_*N$ is a (coherent) $\sD^\dagger_{\frX,k}$-module via
the isomorphism ${\rm Ad}(g): \sD^\dagger_{\frX,k}\car (\rho_g)_*\sD^\dagger_{\frX,k}$, cf. \ref{equ-ringiso0}.

\begin{lemma} One has

$$\Loc^\dagger_{\frX,k}(\delta_gM^{\rm an}_k) = (\rho_g)_* \Loc^\dagger_{\frX,k} (M^{\rm an}_k) = (\rho_g)_* \Big( \sD^\dagger_{\frX,k}\otimes_{D(\frg,B_{k+1})} M\Big) \;.$$

\vskip8pt
\end{lemma}

\begin{proof} This can be proved as in \cite[6.2.6]{PSS4}.
\end{proof}

\vskip8pt

Since $\Loc^\dagger_{\frX,k}$ commutes with direct sums, we may summarize the whole discussion in the general identity

\begin{numequation}\label{formula} \Loc^\dagger_{\frX,k} (\bM_k)= \oplus_{g\in C(k)}  \;(\rho_g)_*  \Big( \sD^\dagger_{\frX,k}\otimes_{D(\frg,B_{k+1})} M\Big)\end{numequation}
of $\sD^\dagger_{\frX,k}$-modules, valid for an arbitrary $D(\frg,B)$-module $M$ (finitely generated over $U(\frg)$) and its coadmissible module $\bM$.

\vskip8pt

\subsection{Principal series representations}\label{princ_series}
We first note the general observation which follows directly from the definition of the algebra $D(\frg,\cdot)$, cf. subsection \ref{reps_att_to_Lie_reps}. If $B'\sub B$ is an open
subgroup and if $\lambda$ denotes a locally analytic character of $B'$, then we have a canonical algebra isomorphism

\begin{numequation}\label{equ-reduction} D(\frg,B')/D(\frg,B')I(\lambda)\simeq U(\frg)/U(\frg)I(d\lambda)\end{numequation}
where $I(\lambda)$ and $I(d\lambda)$ denote the ideals equal to the kernel of $D(B')\stackrel{\lambda}{\lra}K$ and $\frb\stackrel{d\lambda}{\lra}K$ respectively.

\vskip8pt

Now let $\lambda$ be a locally analytic character of $T$ viewed as a character of $B$. We then have the locally analytic principal series representation

$$V:={\rm Ind}_B^G(\lambda^{-1})=\{ f \in C^{\rm la}(G,K): f(gb)=\lambda(b)f(g) {\rm ~for~all~}g\in G, b \in B\}$$

\vskip8pt

with $G$ acting by left translations. Here, $C^{\rm la}(\cdot,K)$ denotes $K$-valued locally analytic functions. We wish to compute the localization $\Loc^\dagger_{\frX,k}$ of the dual of its subspace of $\Gkc$-analytic vectors $V_{\Gkc-\rm an}$ for any sufficienly large $k$. We therefore assume in the following that $k$ is large
enough such that the restriction of $\lambda$ to $T\cap G_{k+1}$ is $\bbT(k)^\circ$-analytic. Let $d\lambda:\frt \ra K$ be the induced character of $\frt$ viewed as a character of $\frb$ and let

$$M(\lambda):=U(\frg)\otimes_{U(\frb)} K_{d\lambda}$$

\vskip8pt

be the induced module. Then $M(\lambda)$ is naturally a $D(\frg,B)$-module and the $D(G)$-module $\bM(\lambda)$ associated with $M(\lambda)$ by the functor \ref{OSfunctor} equals the coadmissible module of the representation $V$ \cite{OrlikStrauchJH}. In particular, $\bM(\lambda)_k=(V_{\Gkc-\rm an})'$ in our notation \ref{anvect}
and therefore

$$\Loc^\dagger_{\frX,k} (\bM(\lambda)_k)=\oplus_{g\in C(k)}  \;(\rho_g)_*  \Big( \sD^\dagger_{\frX,k}\otimes_{D(\frg,B_{k+1})} M(\lambda)\Big)$$

\vskip8pt

by the general formula \ref{formula}. We wish to reinterpret this formula in terms of the classical Beilinson-Bernstein localization of
the $U(\frg)$-module $M(\lambda)$ \cite{BB81}.

\vskip8pt

First of all,

$$M(\lambda)=D(\frg,B_{k+1})/D(\frg,B_{k+1})I_{k+1}(\lambda)$$

\vskip8pt

as a $D(\frg,B_{k+1})$-module
where $I_{k+1}(\lambda)$ denotes the kernel of $D(B_{k+1})\stackrel{\lambda}{\lra} K$, cf. \ref{equ-reduction}. By the choice of $k$ the character $d\lambda$ extends to a character of $\cD^{\rm an}(\bbB(k)^\circ)$ whose kernel is generated by $I(d\lambda)\subset U(\frb)$. It follows

\begin{numequation}\label{reinter} M(\lambda)^{\rm an}_k = \Dgk/\Dgk I_{k+1}(\lambda) = \Dgk\otimes_{U(\frg)} M(\lambda).\end{numequation}

Now the Beilinson-Bernstein localization \cite{BB81} of a finitely generated $U(\frg)$-module $M$ with character $\theta_0$ is a coherent $\sD_{\bbX}$-module ${\rm Loc}(M)$ over the sheaf $\sD_{\bbX}$ of usual algebraic differential operators on the algebraic flag variety $\bbX = \bbB \bksl \bbG$. Let $\bbX^\rig$ be the associated rigid-analytic space with its canonical morphism $\iota: \bbX^\rig \ra \bbX$ of locally ringed spaces.
Let ${\rm sp}_{\frX}: \bbX^\rig\rightarrow\frX$ denote the specialization morphism. Then $({\rm sp}_{\frX})_*\iota^* {\rm Loc}(M)$ is an $\cO_{\frX,\Q}$-module with an action of the sheaf $({\rm sp}_{\frX})_*\iota^*\sD_{\bbX}$. We denote its base change along the natural morphism

$$({\rm sp}_{\frX})_*\iota^*\sD_{\bbX}\lra \sD^\dagger_{\frX,k}$$

\vskip8pt

by

$${\rm Loc}(M)^\dagger_{\frX,k}:=\sD^\dagger_{\frX,k}\otimes ({\rm sp}_{\frX})_*\iota^* {\rm Loc}(M) \;,$$

\vskip8pt

a coherent $\sD^\dagger_{\frX,k}$-module. Suppose now that $\lambda$ is associated by the Harish-Chandra isomorphism to the central character $\theta_0$ and consider $M:=M(\lambda)$. We then have

$${\rm Loc}(M(\lambda))^\dagger_{\frX,k}=\sD^\dagger_{\frX,k}\otimes_{U(\frg)} M(\lambda)=\Loc^\dagger_{\frX,k} (M(\lambda)^{\rm an}_k)$$

\vskip8pt

according to \ref{reinter}. We may thus state

$$\Loc^\dagger_{\frX,k} ((V_{\Gkc-\rm an})')= \oplus_{g\in C(k)}  \;(\rho_g)_*{\rm Loc}(M(\lambda))^\dagger_{\frX,k} \;.$$

\vskip8pt

Let for example $\lambda=-2\rho$ where $\rho$ denotes half the sum over the positive roots (relative to $\bbB$) of $\bbG$.
The sheaf ${\rm Loc}(M(-2\rho))$ is known to be a skyscraper sheaf with support in the origin $\bbB \in \bbX$ \cite[5.1.1]{BK81}.
The fibre $\iota^{-1}(\bbB)$ is a single point in $\bbX^\rig$ and $o:={\rm sp}_{\frX}(\iota^{-1}(\bbB))$ is a closed point in $\frX$. It follows that
${\rm Loc}(M(-2\rho))^\dagger_{\frX,k}$ is a skyscraper sheaf supported at the point $o$. Hence if $V:={\rm Ind}_B^G(2\rho)$ (an irreducible representation by \cite{OrlikStrauchJH}), then the localization $\Loc^\dagger_{\frX,k} ((V_{\Gkc-\rm an})')$ is a sum of copies of this skyscraper sheaf placed at the finitely many points $go \in \frX$ for $g \in C(k)$.

\bibliographystyle{plain}
\bibliography{mybib}

\end{document}